\documentclass[11pt]{article}

\usepackage[utf8]{inputenc}
\usepackage[T1]{fontenc}
\usepackage[margin=1in]{geometry}
\usepackage{amsmath,amsfonts,amssymb,amsthm}
\usepackage{bm}
\usepackage{microtype}
\usepackage{graphicx}
\usepackage{float}
\usepackage[table]{xcolor}
\usepackage[para,online,flushleft]{threeparttable}
\usepackage{booktabs}
\usepackage{enumitem}
\usepackage{algorithm}
\usepackage{algorithmic}
\usepackage{placeins}
\usepackage{bbding}
\usepackage{makecell}
\usepackage{hhline}
\usepackage{natbib}
\usepackage{hyperref}

\hypersetup{
  hidelinks,
  pdfauthor={Chengchang Liu and Luo Luo},
  pdftitle={Randomized Quasi-Gauss--Newton Methods for Solving General Nonlinear Equations},
  pdfsubject={Quasi-Newton methods for nonlinear equations},
  pdfkeywords={quasi-Newton methods, nonlinear equations, nonlinear least squares, randomized algorithms, local convergence}
}

\theoremstyle{plain}
\newtheorem{theorem}{Theorem}
\newtheorem{lemma}[theorem]{Lemma}
\newtheorem{proposition}[theorem]{Proposition}
\newtheorem{corollary}[theorem]{Corollary}
\newtheorem{assumption}[theorem]{Assumption}
\theoremstyle{definition}
\newtheorem{definition}[theorem]{Definition}
\theoremstyle{remark}
\newtheorem{remark}[theorem]{Remark}
\theoremstyle{plain}

\newenvironment{keywords}
  {\par\medskip\noindent\small\textbf{Keywords:} }
  {\par\medskip}

\def\1{\bf{1}}

\newcommand{\Norm}[1]{\left\| #1 \right\|}
\newcommand{\norm}[1]{\left\| #1 \right\|_2}

\def\vzero{{\bf{0}}}

\def\va{{\bf{a}}}

\def\ve{{\bf{e}}}

\def\vu{{\bf{u}}}
\def\vv{{\bf{v}}}
\def\vw{{\bf{w}}}
\def\vx{{\bf{x}}}
\def\vy{{\bf{y}}}

\def\fE{{\mathcal{E}}}

\def\fN{{\mathcal{N}}}

\def\BE{{\mathbb{E}}}

\def\BR{{\mathbb{R}}}

\def\mA {{\bf A}}

\def\mF {{\bf F}}
\def\mG {{\bf G}}
\def\mH {{\bf H}}
\def\mI {{\bf I}}
\def\mJ {{\bf J}}

\def\mQ {{\bf Q}}
\def\mR {{\bf R}}

\newcommand{\R}{\mathbb{R}}

\makeatletter
\def\Ddots{\mathinner{\mkern1mu\raise\p@
\vbox{\kern7\p@\hbox{.}}\mkern2mu
\raise4\p@\hbox{.}\mkern2mu\raise7\p@\hbox{.}\mkern1mu}}
\makeatother

\makeatletter
\newcommand*{\rom}[1]{\expandafter\@slowromancap\romannumeral #1@}
\makeatother

\def\vx {{{\bf x}}}
\def\vw {{{\bf w}}}
\def\vy {{{\bf y}}}
\def\va {{{\bf a}}}
\def\BR{{\mathbb{R}}}

\def\A{{\bf A}}
\def\B{{\bf B}}

\def\F{{\bf F}}

\def\G{{\bf G}}
\def\H{{\bf H}}
\def\I{{\bf I}}
\def \J{{\bf J}}

\def\Q{{\bf Q}}

\def\R{{\bf R}}

\def\S{{\bf S}}

\def\u{{\bf u}}

\def\v{{\bf v}}

\def\w{{\bf w}}

\def\x{{\bf x}}

\def\y{{\bf y}}

\def\0{{\bf 0}}
\def\1{{\bf 1}}

\def\OM{{\mathcal O}}

\def\RB{{\mathbb R}}
\def\EB{{\mathbb E}}

\def \EBP #1{\EB\left[#1\right]}
\def \tr #1{\text{\rm tr}\left(#1\right)}

\def \srk{{\text{\rm SR1}}}

\def \bfgs{{\text{\rm BFGS}}}
\def \fbfgs{{\text{\rm F-BFGS}}}

\def \L{{\bf L}}

\begin{document}

\title{Randomized Quasi-Gauss--Newton Methods for Solving General Nonlinear Equations}
\author{Chengchang Liu \and Luo Luo}
\date{}
\maketitle
\begin{abstract}
This paper considers the local convergence for solving general nonlinear equations. 
We establish randomized quasi-Gauss--Newton methods based on the approximation of the Gram matrix, addressing both the underdetermined and the overdetermined settings.
For the underdetermined case, we show that our methods achieve local superlinear convergence to the optimal solution.
For the overdetermined case, we show that our methods achieve local condition-number-free convergence to the stationary point of the nonlinear least-square formulation of the problem.
In contrast, existing quasi-Newton methods mostly focus on square systems and additionally require the initial Jacobian estimate to be sufficiently close to the exact one.
\end{abstract}

\begin{keywords}
quasi-Newton methods, nonlinear equations, nonlinear least squares, randomized algorithms, explicit superlinear convergence
\end{keywords}

\section{Introduction}
We study the problem of solving the nonlinear equation
\begin{align}\label{eq:nonlinear_obj}
    \F(\x) = \0,
\end{align}
where $\F(\cdot)\triangleq [F_1(\cdot);\cdots;F_n(\cdot)]^{\top}$ is a general mapping such that $\F:\RB^{d}\to\RB^n$ with differentiable $F_i:\RB^d\to\RB$ for all $i\in[n]$. 
We also consider its nonlinear least square formulation
\begin{align}
    \label{eq:nonlinear_least_obj}
    \min_{\x\in\RB^d} \phi(\x)\triangleq\frac{1}{2}\|\F(\x)\|_2^2.
\end{align}
Numerous applications in machine learning~\citep{scieur2018nonlinear,alizamir2020comparative,song2021accelerating}, game theory~\citep{frehse1984nonlinear,nourian2013e}, economics~\citep{albu2006non}, and control systems~\citep{more1989collection,berthier2021fast} can be formulated as above problems, which have been extensively studied for decades and be considered one of the most important in scientific computing~\citep{nesterov2007modified}.

The Gauss--Newton method is very popular for solving nonlinear equations system, and it iterates according to
\begin{align}  
\label{update:gn}
    \x_{+} = \x-\J(\x)^{\dag}\F(\x),
\end{align}
where we use $\J(\x)\in\BR^{n\times d}$ to denote the Jacobian of $\F(\cdot)$ at $\vx$ and the notation $(\,\cdot\,)^{\dag}$ presents the pseudoinverse.
We are interested in the case that the Jacobian in the local region of the solution is full rank, which implies 
\begin{align}\label{update:gnJ}
\J(\x)^{\dag}=\begin{cases}
    \J(\x)^{\top}(\J(\x)\J(\x)^{\top})^{-1}, &\text{if }n<d,\\
	 (\J(\x)^{\top}\J(\x))^{-1}\J(\x)^{\top}, &\text{if }n\geq d.    
\end{cases}
\end{align}
It is well-known that the Gauss--Newton method enjoys faster local convergence rates than simply applying gradient descent to solve problem (\ref{eq:nonlinear_least_obj})~\citep{gratton2007approximate,ortega2000iterative,nesterov2018lectures}. 
However, accessing the exact Gram matrix $\J(\x)^{\top}\J(\x)$ 
or~$\J(\x)\J(\x)^{\top}$ and 
computing its inverse require the computational cost of 
$\OM(\min\{n,d\}nd)$, which is impractical for large-scale problems.
Therefore, we desire to reduce the cost of each iteration while maintaining a good convergence behavior.

Quasi-Newton methods are popular to solve nonlinear equations~\citep{yuan2011recent,yuan2022sketched,yuan2009subspace,broyden1967quasi,martinez1991quasi,broyden1967quasi,davidon1991variable,broyden1970convergence,broyden1970convergence2,davidon1991variable,fletcher1963rapidly}. 
They use low-rank modification to approximate the Jacobian or its inverse, with relatively low computational costs, and often enjoy fast local convergence rates.
However, classical quasi-Newton methods such as Broyden's methods \citep{broyden2000discovery}, 
SR1 method~\citep{broyden1967quasi,davidon1991variable}, BFGS method~\citep{broyden1970convergence,broyden1970convergence2}, and DFP method~\citep{davidon1991variable,fletcher1963rapidly} only focus on the square systems. 
The non-asymptotic local convergence of these methods has been established in recent years \citep{rodomanov2021greedy,rodomanov2021new,rodomanov2021rates,ye2022towards,liu2022quasi,lin2022explicit,liu2023symmetric,liu2023block}, while exiting results cannot be applied to more general settings.

There are also several works that consider general nonlinear equations. 
Specifically, \citet{martinez1991quasi,vater2024convergence} proposed quasi-Newton methods for the underdetermined systems (i.e., the case of $n<d$) by generalizing Broyden's updates to approximate the pseudoinverse of the Jacobian, but they only achieve the asymptotic local convergence.
In addition, their convergence guarantees require the assumption that the initial estimator of the Jacobian (or its inverse) to be sufficiently close to the exact one.
Although this assumption is widely adopted in the analysis~\citep{asl2024aj,martinez1991quasi,vater2024convergence,liu2023block}, 
achieving a sufficiently accurate initial estimator may be so expensive.
For the overdetermined case (i.e., the case of $n\geq d$), the nonlinear equation~\eqref{eq:nonlinear_obj} may has no solution so that we typically consider its nonlinear least square formulation~\eqref{eq:nonlinear_least_obj}.
For example, \citet{yabe1991factorized,eriksson1999quasi,zhou2010global} adopt Broyden family updates to approximate the Hessian for the objective of problem \eqref{eq:nonlinear_least_obj}.
However, these methods still require accessing the exact Gram matrix at the early stage of the iterations, leading to expensive per-iteration cost like standard Gauss--Newton methods.

We summarize the main limitations of the aforementioned quasi-Newton methods for nonlinear systems as follows.
\begin{itemize}[leftmargin=*,label={}]
\item \emph{Limitation 1.} Existing non-asymptotic superlinear convergence guarantees for quasi-Newton methods are limited to squared systems (i.e., the case of $n=d$), whereas how to establish the superlinear convergence for the general case is unclear.
\item \emph{Limitation 2.} Existing quasi-Newton methods for general nonlinear systems require either a sufficiently accurate initial Jacobian estimator or accessing the exact Gram matrix, resulting quite expensive computational cost.
\end{itemize}

\begin{table}[t]
  \caption{We summarize the local convergence results and initial conditions of quasi-Newton methods for solving nonlinear equations, where the dash indicates that the algorithm has no convergence guarantee under the corresponding setting.}
  \label{tbl:compare}\vskip-0.1cm
    \resizebox{\linewidth}{!}{
    \centering
    \begin{tabular}{cccccc} \hline\
    Method  & $n=d$ & $n<d$ & $n>d$ & Non-Asymptotic & Initial Condition \\ \hline\hline 
    \makecell{Broyden \\ 
         \footnotesize \citep{lin2021explicit} \\
         \footnotesize \citep{ye2021greedy} \\ 
         \footnotesize \citep{liu2023block}} & 
    superlinear & -- & -- & \Checkmark & $\G_0\approx\J(\x_0)$ \\\hline 
    \makecell{Generalized Broyden \\
         \footnotesize \citep{martinez1991quasi} \\
         \footnotesize \citep{vater2024convergence}
         } & 
    superlinear & superlinear & -- &  \XSolidBrush & $\G_0\approx\J(\x_0)$ \\\hline 
    \makecell{GN-BFGS \\
         \footnotesize \citep{gu2002descent} \\
         \footnotesize \citep{li1999globally}
         } &
    superlinear & -- & -- & \XSolidBrush & $\mG_0=c\mI_m$ \\ \hline
    \makecell{RQGN \\
         \footnotesize Algorithm \ref{alg:RQGN}} &
         ~superlinear~ & ~superlinear~ & ~linear~ & \Checkmark & $\mG_0=c\mI_m$ \\ \hline
    \end{tabular}} \vskip-0.1cm
\end{table}

In this paper, we develop randomized quasi-Gauss–Newton (RQGN) methods to address both the above two limitations.
In contrast to existing methods that directly approximate the (pseudo) inverse of the Jacobian~\citep{lin2021explicit,ye2021greedy,liu2023block,martinez1991quasi,vater2024convergence},
we approximate the Gram matrix in the Gauss--Newton iteration by Broyden family updates with randomized direction,
which leads to the non-asymptotic local convergence guarantees for both overdetermined and underdetermined cases.
We compare our results with existing quasi-Newton methods in Table~\ref{tbl:compare} and summarize our main theoretical contributions as follows.
\begin{itemize}[topsep=0.08cm,itemsep=0.08cm]
\item Our proposed RQGN methods work for both underdetermined and overdetermined nonlinear equations, and they requires neither the exact Gram matrix nor a sufficiently accurate initial Jacobian approximation so its Gram estimator can be initialized by a scaled identity matrix.
\item For the underdetermined case (i.e., $n<d$), we show that our RQGN enjoy explicit non-asymptotic local superlinear convergence rates without assuming uniqueness of the solution.
\item For the overdetermined case (i.e., $n\geq d$), we establish explicit local condition-number-free linear convergence rates for finding stationary points of the nonlinear least square formulation \eqref{eq:nonlinear_least_obj}, and our results hold even if the system \eqref{eq:nonlinear_obj} has no solution.
\end{itemize}

\paragraph{Paper Organization}
In Section~\ref{sec:pre}, we formalize the notations and assumptions throughout this paper and introduce the background of quasi-Newton updates. 
In Section~\ref{sec:rqn}, we propose our randomized quasi-Gauss--Newton (RQGN) method for solving the general nonlinear equations.
In Sections~\ref{sec:underde} and~\ref{sec:overfit}, we establish the non-asymptotic local convergence guarantees of our RQGN for both underdetermined and overdetermined cases, respectively.
In Section~\ref{sec:exp}, we conduct numerical experiments to show the superiority of the proposed methods.
We conclude our work in Section \ref{sec:conclu}.


\section{Notation and Preliminaries} 
\label{sec:pre}

We first introduce the notations used in this paper. We use $\Norm{\,\cdot\,}$ to present the spectral norm and the Euclidean norm of the matrix and the vector, respectively. 
Given a matrix $\S$, we write $\S^{\dag}$ for its Moore--Penrose inverse and $\sigma_{i}(\S)$ for its $i$th largest singular value; when $\S$ is square, 
we use~$\tr{\S}$ to denote its trace.
For two positive definite matrices $\G$ and $\H$ with the same size, we write
$\langle\G,\H\rangle\triangleq\text{tr}(\G\H)$ for their inner product.
Additionally, we denote the Jacobian of the mapping $\F:\BR^d\to\BR^n$ as $\J:\BR^d\to\BR^{n\times d}$ and define the Gram matrix~as
\begin{align}\label{eq:H-Gram}
    \H(\x)\triangleq
\begin{cases}
\J(\x)\J(\x)^{\top}, & \text{if}~~n<d, \\    
\J(\x)^{\top}\J(\x), & \text{if}~~n\geq d.
\end{cases}
\end{align}
For the ease of presentation, we let $m\triangleq\min\{n,d\}$ so that we can write $\H(\x)\in\RB^{m\times m}$.
Accordingly, we denote the standard basis for the Euclidean space $\BR^m$ by $\{\ve_1,\dots,\ve_m\}$ and let $\I_{m}$ be the identity matrix with the size of $m\times m$.

We then formalized the assumptions of our problems (\ref{eq:nonlinear_obj}) and (\ref{eq:nonlinear_least_obj}). 
We assume the nonlinear mapping $\F:\BR^d\to\BR^n$ 
has the bounded and Lipschitz continuous Jacobian $\mJ:\BR^d\to\BR^{n\times d}$.
\begin{assumption}
\label{ass:Jlip}
    We assume that there exist constants $L>0$ and $L_2>0$ such that
    \begin{align}
    \label{eq:lip}
    \|\J(\x)\|\leq L\qquad\text{and}\qquad\| \J(\x)-\J(\y)\|\leq L_2\|\x-\y\| 
    \end{align}
    for all $\vx,\vy\in\BR^d$. 
\end{assumption}
We also assume that the Jacobian Jacobian $\mJ:\BR^d\to\BR^{n\times d}$ is always nonsingular for some non-empty set.
\begin{assumption}
\label{ass:nonsingular}
We assume that there exist some non-empty set $\Omega\subseteq\RB^d$ 
and a constant $\mu>0$ such that $\sigma_m(\J(\x))\geq \mu$ holds for all $\x\in\Omega$, where $m=\min\{n,d\}$.
\end{assumption}

The above assumptions indicate the following properties for the nonlinear mapping $\F(\cdot)$ and the Gram matrix $\H(\cdot)$.
\begin{proposition}
\label{prop:gram_lip}
    Under Assumption~\ref{ass:Jlip}, the nonlinear mapping $\F(\cdot)$ is $L$-Lipschitz continuous and the Gram matrix $\H(\cdot)$ is $2LL_2$-Lipschitz continuous with $\H(\x)\preceq L^2\I_m$ for all $\x\in\RB^{d}$.
\end{proposition}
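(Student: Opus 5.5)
The proof splits into three independent claims, each following from Assumption~\ref{ass:Jlip} by elementary estimates.

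\emph{Lipschitz continuity of $\F$.} I would use the integral form of the mean value theorem along the segment:
\begin{align*}
\F(\x)-\F(\y)=\int_0^1 \J(\y+t(\x-\y))(\x-\y)\,dt .
\end{align*}
Taking norms and using $\|\J(\cdot)\|\le L$ gives $\|\F(\x)-\F(\y)\|\le L\|\x-\y\|$. Differentiability of each $F_i$ together with Lipschitz continuity of $\J$ makes $\F$ continuously differentiable, so this fundamental-theorem-of-calculus representation is justified.

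\emph{Bound $\H(\x)\preceq L^2\I_m$.} In both cases $\H(\x)$ is symmetric positive semidefinite. Its largest eigenvalue equals $\sigma_1(\J(\x))^2=\|\J(\x)\|^2\le L^2$, whether $\H=\J\J^\top$ or $\H=\J^\top\J$. Hence $\H(\x)\preceq L^2\I_m$.

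\emph{Lipschitz continuity of $\H$.} I would treat the case $n\ge d$, where $\H=\J^\top\J$; the case $n<d$ is identical after transposing. I add and subtract a cross term:
\begin{align*}
\J(\x)^\top\J(\x)-\J(\y)^\top\J(\y)=\J(\x)^\top\bigl(\J(\x)-\J(\y)\bigr)+\bigl(\J(\x)-\J(\y)\bigr)^\top\J(\y).
\end{align*}
Submultiplicativity of the spectral norm, invariance of the norm under transposition, $\|\J\|\le L$, and $\|\J(\x)-\J(\y)\|\le L_2\|\x-\y\|$ then bound the norm of this difference by $2LL_2\|\x-\y\|$.

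None of these steps is a real obstacle. The only point needing care is choosing the splitting in the last step correctly in both cases, so that each term pairs one bounded Jacobian factor with one Jacobian-difference factor.
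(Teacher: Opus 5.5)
Your proposal is correct and follows essentially the same route as the paper. For the Lipschitz bound on $\H$ you use the same add-and-subtract splitting $\J(\x)^\top(\J(\x)-\J(\y))+(\J(\x)-\J(\y))^\top\J(\y)$, and for $\H(\x)\preceq L^2\I_m$ you use the spectral-norm bound $\|\H(\x)\|\le\|\J(\x)\|^2\le L^2$. The only addition is that you spell out the integral mean-value argument for the $L$-Lipschitz continuity of $\F$, which the paper simply asserts follows directly from $\|\J(\x)\|\le L$.
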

\begin{proposition}
\label{prop:gram_lower}
    Under Assumption~\ref{ass:nonsingular}, it holds  $\H(\x)\succeq \mu^2\I_m$ for all $\x\in\Omega$.
\end{proposition}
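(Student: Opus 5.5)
The statement to prove is Proposition~\ref{prop:gram_lower}: under Assumption~\ref{ass:nonsingular}, $\H(\x)\succeq\mu^2\I_m$ for every $\x\in\Omega$. The plan is to reduce this to a statement about singular values. Fix $\x\in\Omega$ and write $\J=\J(\x)\in\RB^{n\times d}$. Assumption~\ref{ass:nonsingular} gives $\sigma_m(\J)\ge\mu>0$, where $m=\min\{n,d\}$. I will then show that the eigenvalues of the $m\times m$ Gram matrix $\H(\x)$ are exactly $\sigma_1(\J)^2,\dots,\sigma_m(\J)^2$. Once this is in hand, $\lambda_{\min}(\H(\x))=\sigma_m(\J)^2\ge\mu^2$. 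Since $\H(\x)$ is symmetric, this is equivalent to $\H(\x)-\mu^2\I_m\succeq 0$.

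To identify the eigenvalues, I will use the singular value decomposition and treat the two cases in the definition \eqref{eq:H-Gram} separately.

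\emph{Case $n<d$, so $m=n$ and $\H=\J\J^{\top}$.} Write $\J=\mathbf{U}\boldsymbol{\Sigma}\mathbf{V}^{\top}$ with $\mathbf{U}\in\RB^{n\times n}$ orthogonal, $\boldsymbol{\Sigma}=[\mathrm{diag}(\sigma_1,\dots,\sigma_n),\,\0]$, and $\mathbf{V}$ orthogonal. Then $\J\J^{\top}=\mathbf{U}\,\mathrm{diag}(\sigma_i^2)\,\mathbf{U}^{\top}$.

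\emph{Case $n\ge d$, so $m=d$ and $\H=\J^{\top}\J$.} Similarly, $\J^{\top}\J=\mathbf{V}\,\mathrm{diag}(\sigma_i^2)\,\mathbf{V}^{\top}$ with $\mathbf{V}\in\RB^{d\times d}$.

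In both cases $\H(\x)$ is orthogonally similar to $\mathrm{diag}(\sigma_1^2,\dots,\sigma_m^2)$, with no zero padding, because the dimension of $\H$ equals $m$. This absence of padding is what makes the smallest eigenvalue $\sigma_m^2$ rather than $0$. So the case distinction in \eqref{eq:H-Gram} is exactly what ensures the Gram matrix has the correct size.

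A variational argument gives the same conclusion without the SVD. For $n<d$ and any $\u\in\RB^n$,
\[
\u^{\top}\H\u=\|\J^{\top}\u\|^2\ge\sigma_{\min}(\J^{\top})^2\|\u\|^2=\sigma_n(\J)^2\|\u\|^2 .
\]
The case $n\ge d$ is identical, with $\|\J\u\|^2\ge\sigma_d(\J)^2\|\u\|^2$.

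There is no real obstacle here. The only point needing care is the dimension bookkeeping in this case split, so that the $m$-th singular value is the smallest one relevant to $\H(\x)$.
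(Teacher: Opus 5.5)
Your proof is correct and follows the same route as the paper, which simply asserts $\sigma_m(\H(\x))=\sigma_m^2(\J(\x))\geq\mu^2$ and concludes $\H(\x)\succeq\mu^2\I_m$. Your SVD (or variational) argument supplies the justification for that identity, including the dimension bookkeeping for the two cases of \eqref{eq:H-Gram}.
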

The Gram matrix $\mH(\cdot)$ also holds the following partial relation.
\begin{proposition}
\label{prop:Gram_concor}
    Under Assumptions \ref{ass:Jlip} and \ref{ass:nonsingular}, it holds 
    \begin{align}
    \label{eq:Gram_bound_0}
        \frac{\H(\x)}{1+M\|\y-\x\|}\preceq\H(\y) \preceq (1+M\|\y-\x\|)\H(\x).
    \end{align}
    for all $\x,\y\in\Omega$, where  $M\triangleq2LL_2/\mu^2$.    
\end{proposition}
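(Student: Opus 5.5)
The plan is to combine the Lipschitz bound on $\H(\cdot)$ from Proposition~\ref{prop:gram_lip} with the lower bound from Proposition~\ref{prop:gram_lower}. This turns an additive perturbation bound into a multiplicative (Loewner-order) one. Throughout, fix $\x,\y\in\Omega$ and write $r\triangleq\|\y-\x\|$.

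\textbf{Step 1 (additive bound).} By Proposition~\ref{prop:gram_lip}, $\|\H(\y)-\H(\x)\|\le 2LL_2 r$. Since $\H(\y)-\H(\x)$ is symmetric, this is equivalent to
\begin{align*}
-2LL_2 r\,\I_m\preceq \H(\y)-\H(\x)\preceq 2LL_2 r\,\I_m .
\end{align*}
If one wants to verify the Lipschitz constant directly, in the case $n<d$ write $\J(\y)\J(\y)^{\top}-\J(\x)\J(\x)^{\top}=\J(\y)(\J(\y)-\J(\x))^{\top}+(\J(\y)-\J(\x))\J(\x)^{\top}$ and use Assumption~\ref{ass:Jlip}. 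The case $n\ge d$ is symmetric.

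\textbf{Step 2 (upper inequality).} Since $\x\in\Omega$, Proposition~\ref{prop:gram_lower} gives $\I_m\preceq \mu^{-2}\H(\x)$. Substituting into Step 1,
\begin{align*}
\H(\y)\preceq \H(\x)+2LL_2 r\,\mu^{-2}\H(\x)=(1+Mr)\H(\x).
\end{align*}

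\textbf{Step 3 (lower inequality).} Swap the roles of $\x$ and $\y$; this is legitimate because $\y\in\Omega$ as well. Using $\I_m\preceq\mu^{-2}\H(\y)$, we get $\H(\x)\preceq(1+Mr)\H(\y)$. Dividing by the positive scalar $1+Mr$ gives $\H(\x)/(1+Mr)\preceq\H(\y)$.

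The key point is Step 3: the lower bound must use $\H(\y)\succeq\mu^2\I_m$ rather than $\H(\x)$. This is exactly why the statement requires both points to lie in $\Omega$. Once that is noted, there is no real obstacle, and the rest is routine manipulation of the Loewner order.
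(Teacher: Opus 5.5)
Your proposal is correct and follows essentially the same route as the paper: convert the $2LL_2$-Lipschitz bound on $\H(\cdot)$ into $\pm 2LL_2\|\y-\x\|\,\I_m$ Loewner bounds, then replace $\I_m$ by $\mu^{-2}\H(\x)$ for the upper inequality and by $\mu^{-2}\H(\y)$ for the lower one. You are also right that the lower inequality needs $\y\in\Omega$, which is exactly how the paper uses Proposition~\ref{prop:gram_lower}.
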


We now introduce some popular quasi-Newton updates to approximate the target matrix~$\H\in\RB^{m\times m}$ by 
the current estimator $\G\in\RB^{m\times m}$ and the update direction $\u\in\RB^{m}$ in the following definition.

\begin{definition}[{\citet{lin2022explicit,rodomanov2021greedy}}]
\label{def:qnupdate}
Given two positive-definite matrices $\G,\H\in\RB^{m\times m}$ and vector $\u\in\RB^m$, we define the following quasi-Newton updates.
\begin{itemize}
\item The BFGS update is defined by
\begin{align*}
\bfgs(\G,\H,\u)
&\triangleq \G-\frac{\G\u\u^{\top}\G}{\u^{\top}\G\u}
 +\frac{\H\u\u^{\top}\H}{\u^{\top}\H\u}.
\end{align*}
\item The fast BFGS update is defined by
\begin{align*}
\fbfgs(\G,\H,\u)
&\triangleq\bfgs\big(\G,\H,\L^{\top}\u\big),
\end{align*}
where the matrix $\L\in\RB^{m\times m}$ satisfies $\G^{-1}=\L^{\top}\L$.
\item The SR1 update is defined by
\begin{align*}
\srk(\G,\H,\u)
&\triangleq\G-\frac{(\G-\H)\u\u^{\top}(\G-\H)}
{\u^{\top}(\G-\H)\u}.
\end{align*}
\end{itemize}
\end{definition}
\begin{remark}
The fast BFGS update requires to maintain $\L_t$ such that $\G_t^{-1}=\L_t^{\top}\L_t$, which can be implemented by a closed form update with the complexity of $\OM(m^2)$ flops per iteration.
Please refer to Algorithm~\ref{alg:fbfgs} in Appendix~\ref{app:fbfgs}.
\end{remark}

We describe the convergence of above quasi-Newton updates by introducing the quantities
\begin{align*}
    \sigma_{\H}(\G) \triangleq \tr{\H^{-1}(\G-\H)}\qquad\text{and}\qquad\tau_{\H}(\G)\triangleq\tr{\G-\H}
\end{align*}
to describe the difference between positive-definite matrices $\G\in\RB^{m\times m}$ and $\H\in\RB^{m\times m}$. 
By taking random vector $\u\in\RB^{m}$ according to Gaussian distribution, it holds the following in-expectation convergence guarantees \citep{lin2022explicit}.

\begin{lemma}[{\citet[Theorems~6, 13, and~14]{lin2022explicit}}]
\label{lm:QN_one_iter}
    Let $\H,\G\in\RB^{m\times m}$ be positive definite with $\H\preceq\G$. We suppose $\mu^2\I_m\preceq\H\preceq L^2\I_m$ for some constants $L,\mu>0$ such that~$L\geq\mu$. By taking $\vu\sim\fN(\vzero,\mI_m)$, the quasi-Newton updates has the following properties.
\begin{itemize}
\item For $\G_{+}=\bfgs(\G,\H,\u)$, it holds  $\EBP{\sigma_{\H}(\G_{+})}\leq(1-\mu^2/(mL^2))\sigma_{\H}(\G)$;
\item For $\G_{+}=\fbfgs(\G,\H,\u)$, it holds  $\EBP{\sigma_{\H}(\G_{+})}\leq(1-1/m)\sigma_{\H}(\G)$;
\item For $\G_{+}=\srk(\G,\H,\u)$, it holds  $\EBP{\tau_{\H}(\G_{+})}\leq(1-1/m)\tau_{\H}(\G)$.
\end{itemize}
\end{lemma}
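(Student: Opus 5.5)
This lemma is quoted from \citet{lin2022explicit}, so I would reprove each bullet by a one-step expectation computation. First I would show that every update preserves $\H\preceq\G_{+}$. This is the standard property of BFGS and SR1 when the target matrix is constant and $\H\preceq\G$. For SR1 one writes $\G_+-\H=\mR-\mR\u\u^{\top}\mR/(\u^{\top}\mR\u)$ with $\mR=\G-\H\succeq 0$, which is a projected PSD matrix. For BFGS one uses the analogous argument in the $\H^{-1/2}$-scaled coordinates. In all three cases I would then compute the exact one-step decrease of the potential and take expectation over $\u\sim\fN(\vzero,\mI_m)$.

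\textbf{SR1.} I would use $\tau_{\H}(\G_+)=\tau_{\H}(\G)-\u^{\top}\mR^2\u/(\u^{\top}\mR\u)$. Because $\mR\succeq 0$, we have $\u^{\top}\mR^2\u/\u^{\top}\mR\u\ge \u^{\top}\mR\u/\tr{\mR}\cdot\lambda$-type bounds. The cleanest route is the inequality $\u^{\top}\mR^2\u\cdot\u^{\top}\u\ge(\u^{\top}\mR\u)^2$, which is Cauchy--Schwarz. I would then pass to the uniform direction on the sphere, since the ratio is scale invariant. Its expectation satisfies $\EB[\u^{\top}\mR^2\u/\u^{\top}\mR\u]\ge\EB[\u^{\top}\mR\u/\u^{\top}\u]=\tr{\mR}/m$. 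This gives the factor $(1-1/m)$.

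\textbf{BFGS.} I would use the standard identity
\[
\sigma_{\H}(\G)-\sigma_{\H}(\G_+)=\frac{\u^{\top}\G\H^{-1}\G\u}{\u^{\top}\G\u}-1\ \ge\ \frac{\u^{\top}(\G-\H)\u}{\u^{\top}\H\u}.
\]
The inequality follows by writing $\G=\H+\mR$ and dropping a nonnegative term. Then $\u^{\top}\H\u\le L^2\|\u\|^2$, and $\H^{-1}\preceq\mu^{-2}\I$ gives $\tr{\mR}\ge\mu^2\sigma_{\H}(\G)$. Using the sphere expectation $\EB[\u^{\top}\mR\u/\|\u\|^2]=\tr{\mR}/m$, the expected decrease is at least $\mu^2\sigma_{\H}(\G)/(mL^2)$.

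\textbf{Fast BFGS.} The direction $\L^{\top}\u$ effectively whitens with respect to $\G$. In the same identity the decrease becomes
\[
\frac{\u^{\top}\L\G\H^{-1}\G\L^{\top}\u}{\u^{\top}\L\G\L^{\top}\u}-1.
\]
Here $\L\G\L^{\top}$ is an orthogonal matrix times its transpose, hence $\L\G\L^{\top}=\I$. The expectation is therefore $\tr{\L\G\H^{-1}\G\L^{\top}}/m-1=(\tr{\G\H^{-1}}-m)/m=\sigma_{\H}(\G)/m$, which gives exactly $1-1/m$.

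\textbf{Main obstacle.} The delicate step is the BFGS decrease identity and the inequality that drops the nonnegative term. I would verify them by the Sherman--Morrison-free trace computation $\tr{\H^{-1}\G_+}=\tr{\H^{-1}\G}-\u^{\top}\G\H^{-1}\G\u/\u^{\top}\G\u+1$, followed by Cauchy--Schwarz in the $\H^{-1}$ inner product: $(\u^{\top}\G\u)^2\le \u^{\top}\G\H^{-1}\G\u\cdot \u^{\top}\H\u$.
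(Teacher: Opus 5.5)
Your proposal is correct. The paper does not prove this lemma; it imports it from \citet{lin2022explicit}. Your argument supplies a self-contained derivation along the standard lines: the exact trace identity for the one-step change of the potential, a Cauchy--Schwarz inequality, and $\EB[\u\u^{\top}/\|\u\|^2]=\I_m/m$ from rotational invariance of the Gaussian. For fast BFGS this even yields equality, since $\L\G\L^{\top}=\I_m$.

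Three small points of presentation.
\begin{itemize}
\item In the BFGS step the discarded quantity is $(ac-b^2)/(a(a+b))$, where $a=\u^{\top}\H\u$, $b=\u^{\top}\mR\u$ and $c=\u^{\top}\mR\H^{-1}\mR\u$. Its nonnegativity is therefore exactly the Cauchy--Schwarz inequality you state at the end, not something immediate from $\mR\succeq\0$.
\item The preservation $\H\preceq\G_{+}$ is not used in any of the three bounds; in the paper it is the separate Lemma~\ref{lm:QN_order}. Only the hypothesis $\mR=\G-\H\succeq\0$ enters.
\item For SR1, the event $\u^{\top}\mR\u=0$ has probability zero unless $\mR=\0$. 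In that case both sides vanish under the usual convention $\G_{+}=\G$.
\end{itemize}
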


\begin{lemma}[{\citet[Lemma~5]{lin2022explicit}}]
\label{lm:QN_order}
    Let $\H,\G\in\RB^{m\times m}$ be positive definite with $\H\preceq\G\preceq\eta\H$ for some $\eta\geq1$. If the matrix $\G_{+}$ is obtained by either BFGS update, fast BFGS update, or SR1 update by following Definition~\ref{def:qnupdate}, it holds $\H\preceq\G_{+}\preceq\eta\H$.
\end{lemma}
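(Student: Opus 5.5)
The plan is to prove the two-sided bound $\H\preceq\G_{+}\preceq\eta\H$ separately for each update in Definition~\ref{def:qnupdate}. The main tool is a variational description of the rank-one "deflation" map. For a positive semidefinite $\mA$ and a vector $\u$ with $\u^{\top}\mA\u>0$, I would first record the identity
\begin{align*}
\v^{\top}\Big(\mA-\frac{\mA\u\u^{\top}\mA}{\u^{\top}\mA\u}\Big)\v=\min_{t\in\BR}\,(\v-t\u)^{\top}\mA(\v-t\u)\qquad\text{for all }\v.
\end{align*}
This follows by minimizing the quadratic in $t$, whose minimizer is $t^*=\u^{\top}\mA\v/\u^{\top}\mA\u$. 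Two consequences follow immediately. First, the map $\mA\mapsto\mA-\mA\u\u^{\top}\mA/(\u^{\top}\mA\u)$ is monotone in the Loewner order, because a pointwise minimum of monotone quantities is monotone. Second, its value lies between $\0$ and $\mA$.

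\textbf{BFGS.} Write $\Phi_{\u}(\mA)\triangleq\mA-\mA\u\u^{\top}\mA/(\u^{\top}\mA\u)$, so that $\G_{+}=\Phi_{\u}(\G)+\H\u\u^{\top}\H/(\u^{\top}\H\u)$. For the lower bound, $\H\preceq\G$ and monotonicity give
\begin{align*}
\G_{+}\succeq\Phi_{\u}(\H)+\frac{\H\u\u^{\top}\H}{\u^{\top}\H\u}=\H.
\end{align*}
For the upper bound, $\G\preceq\eta\H$ and monotonicity, together with the homogeneity $\Phi_{\u}(\eta\H)=\eta\Phi_{\u}(\H)$, give
\begin{align*}
\G_{+}\preceq\eta\H-(\eta-1)\frac{\H\u\u^{\top}\H}{\u^{\top}\H\u}\preceq\eta\H,
\end{align*}
where the last step uses $\eta\geq1$.

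\textbf{Fast BFGS.} By definition it is the BFGS update applied with the direction $\L^{\top}\u$ in place of $\u$. The BFGS argument above holds for an arbitrary nonzero direction, so the same conclusion follows at once. Here $\L^{\top}\u\neq\0$ whenever $\u\neq\0$, since $\L$ is invertible.

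\textbf{SR1.} Set $\R\triangleq\G-\H$, so that $\0\preceq\R\preceq(\eta-1)\H$, and observe that $\G_{+}-\H=\Phi_{\u}(\R)$. The variational identity, applied with $\mA=\R$, gives $\0\preceq\Phi_{\u}(\R)\preceq\R$. Hence
\begin{align*}
\H\preceq\G_{+}\preceq\H+\R=\G\preceq\eta\H.
\end{align*}

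The only delicate point, which I expect to be the main (minor) obstacle, is the degenerate case $\u^{\top}\R\u=0$ in the SR1 update. Since $\R\succeq\0$, this forces $\R\u=\0$, and I would adopt the standard convention $\G_{+}=\G$ in that case, for which the claim is trivial. Under this convention, the variational identity only ever needs to be applied to $\mA\in\{\G,\H,\R\}$ with $\u^{\top}\mA\u>0$, which holds automatically for $\G$ and $\H$ because they are positive definite.
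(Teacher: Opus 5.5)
Your proof is correct, including the degenerate SR1 case. The convention $\G_{+}=\G$ when $(\G-\H)\u=\0$ is the standard one, and Definition~\ref{def:qnupdate} leaves that case undefined anyway. The paper gives no proof of this lemma; it imports it verbatim from \citet[Lemma~5]{lin2022explicit}. So there is no in-paper argument to match, and your proof stands as a self-contained replacement for the citation.

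Its main merit is economy. The single identity $\v^{\top}\Phi_{\u}(\mA)\v=\min_{t}(\v-t\u)^{\top}\mA(\v-t\u)$ gives both Loewner monotonicity and the sandwich $\0\preceq\Phi_{\u}(\mA)\preceq\mA$. That settles BFGS, and hence fast BFGS, since your BFGS argument only uses that the direction is nonzero. It also settles SR1 through $\G_{+}-\H=\Phi_{\u}(\G-\H)$. No inverse-update formula or case-specific computation is needed.

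It also yields slightly sharper conclusions than the lemma states: $\G_{+}\preceq\eta\H-(\eta-1)\H\u\u^{\top}\H/(\u^{\top}\H\u)$ for BFGS, and $\G_{+}\preceq\G$ for SR1.
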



\begin{algorithm}[t]
\caption{Randomized Quasi-Gauss--Newton Methods (RQGN)}\label{alg:RQGN}
\begin{algorithmic}[1]
\STATE \textbf{Input:} $\x_0\in\Omega$, $\G_0\succeq\H_0\in\RB^{m\times m}$, $M\geq0$ \label{line:input} \\[0.15cm]
\STATE \textbf{for} $t=0,1,\ldots$ \\[0.15cm]
 		\STATE \quad
 		    $\x_{t+1}=\begin{cases}
               \x_t-\J(\x_t)^{\top}\G_t^{-1}\F(\x_t), &\text{if }n<d,\\
	 	    \x_t-\G_t^{-1}\J(\x_t)^{\top}\F(\x_t), &\text{if }n\geq d
	 	    \end{cases}$ \label{eq:line-update-x} \\[0.15cm]
	 	\STATE \quad $r_t=\|\x_{t+1}-\x_{t}\|$ \\[0.15cm]
 		\STATE \quad $\widetilde{\G}_{t}=(1+Mr_t)\G_t$ \\[0.15cm]
        \STATE \quad $\u_t\sim\fN(\0,\I_m)$ \\[0.15cm]
 		\STATE \quad [Option I]~~ $\G_{t+1}=\bfgs(\widetilde{\G}_t,\H(\x_{t+1}),\u_t)$ \label{eq:line-update-BFGS} \\[0.15cm]
 		\STATE \quad [Option II]~ $\G_{t+1}=\fbfgs(\widetilde{\G}_t,\H(\x_{t+1}),\u_t)$ \label{eq:line-update-F-BFGS} \\[0.15cm]
 		\STATE \quad [Option III] $\G_{t+1}=\srk(\widetilde{\G}_t,\H(\x_{t+1}),\u_t)$ \label{eq:line-update-SR1} \\[0.15cm]
\STATE \textbf{end for}
\end{algorithmic}
\end{algorithm}

\section{Randomized Quasi-Gauss--Newton Methods}
\label{sec:rqn}
In this section, we introduce randomized quasi-Gauss--Newton (RQGN) methods for solving general nonlinear equations.
The key idea of our algorithm design is constructing the square matrix~$\G$ to approximate the Gram matrix $\mJ(\vx)\mJ(\vx)^\top$ or $\mJ(\vx)^\top\mJ(\vx)$ in Gauss--Newton formulations~\eqref{update:gn} and \eqref{update:gnJ}, which leads to the iteration scheme
\begin{align}
\label{eq:qNupdate}
    \x_{+}=\begin{cases}
      \x-\J(\x)^{\top}\G^{-1}\F(\x), &\text{if }n<d,\\
      \x-\G^{-1}\J(\x)^{\top}\F(\x), &\text{if }n\geq d.
    \end{cases}
\end{align}
Furthermore, we apply quasi-Newton updates introduced in Definition \ref{def:qnupdate} to construct the square matrix~$\mG^+\in\BR^{m\times m}$ to approximate the Gram matrix at next point $\vx_+\in\BR^m$ by given the current estimator $\G\in\BR^{m\times m}$, i.e.,
\begin{align}
\label{eq:update_G}
\G_{+}
=\begin{cases}
\bfgs(\widetilde{\G},\H(\x_{+}),\u), & \text{Option I},\\
\fbfgs(\widetilde{\G},\H(\x_{+}),\u), & \text{Option II},\\
\srk(\widetilde{\G},\H(\x_{+}),\u), & \text{Option III},
\end{cases}
\end{align}
where $\widetilde{\G}\triangleq (1+M\|\x_{+}-\x\|)\G$, $\u\sim\fN(\0,\I_m)$, $M>0$, and $\mH(\cdot)$ follows equation \eqref{eq:H-Gram}.
Combining the iteration scheme~\eqref{eq:qNupdate} and the construction~\eqref{eq:update_G}, we formally present our randomized quasi-Gauss--Newton methods in Algorithm~\ref{alg:RQGN}.

Note that the initialization of RQGN (Line \ref{line:input} of Algorithm \ref{alg:RQGN}) only requires that the initial Gram estimator satisfies $\G_0\succeq\H_0$. Therefore, Proposition \ref{prop:gram_lip} indicates that we can simply set $\mG_0=c\mI_m$ for some $c\geq L^2$ and maintain the square matrix $\mG_t$ (or its inverse) with the complexity of $\OM(m^2)$ flops per iteration, since the quasi-Newton updates introduced in Definition \ref{def:qnupdate} only involves rank-one or rank-two modification.
Furthermore, we emphasize that the updates of $\vx_{t+1}$ and $\mG_{t+1}$ (Lines \ref{eq:line-update-x} and \ref{eq:line-update-BFGS}--\ref{eq:line-update-SR1}) can be efficiently implemented by accessing the Jacobian-vector product so that it is unnecessary to explicitly compute the Jacobian $\mJ(\cdot)$.

Additionally, our RQGN methods with appropriate scaled parameter $M$ can guarantee the partial relation 
\begin{align}
\label{eq:scale_G00}
    {\tilde \G}_t \succeq \H(\x_{t+1})
\end{align}
holds for all $t\geq 0$ (see Appendix \ref{app:proof_lemma_gram_approximate}), which is crucial to our later convergence analysis .
In contrast, simply applying quasi-Newton updates on matrix $\mG_t$ without scaling cannot achieve~\eqref{eq:scale_G00} for general nonlinear equations.

\section{The Convergence Analysis}\label{sec:local_convergence}

We analyze the local convergence behavior of the randomized quasi-Gauss--Newton method (Algorithm~\ref{alg:RQGN}) for both the underdetermined case and the overdetermined cases in Section~\ref{sec:underde} and~\ref{sec:overfit}, respectively.

For the ease of presentation, we introduce the notations
\begin{align*}
r_t\triangleq \|\x_{t+1}-\x_t\| 
\qquad\text{and}\qquad \H_t\triangleq\H(\x_t).    
\end{align*}
In addition, the initial condition $\x_0\in\Omega$ of Algorithm~\ref{alg:RQGN} and Assumption \ref{ass:nonsingular} indicate $\mH_0\succ \vzero$, so that we can define $\eta_0\in\BR$ as the smallest scalar satisfying $\G_0\preceq\eta_0\H_0$, i.e.,
\begin{align}\label{eq:dfn-eta0}
    \eta_0\triangleq \min_{\{\eta: 
 \H_0\preceq\G_0\preceq \eta\H_0\}}\eta.
\end{align}
Furthermore, we let $\mathcal F_t\triangleq\sigma(\u_0,\ldots,\u_{t-1})$ be the 
sigma filed generated by the random variable sequence $\{\vu_0,\dots,\vu_{t-1}\}$ and define $\mathbb E_t[\,\cdot\,]\triangleq\mathbb E[\,\cdot\mid\mathcal F_t]$.

For our later analysis, we first incorporate Lemmas~\ref{lm:QN_one_iter} and~\ref{lm:QN_order} and Proposition~\ref{prop:Gram_concor} to show how well the square matrix~$\G_{t}$ approximates the Gram matrix $\H_t$ in the following lemma, which extends Lemma 25 of \citet{lin2022explicit}.

\begin{lemma}
    \label{lm:Gram_approximate}
    Under Assumptions \ref{ass:Jlip} and \ref{ass:nonsingular}, we suppose points $\{\vx_t\}$ generated by Algorithm~\ref{alg:RQGN} holds $\x_t\in\Omega$ for all $t$. Then there exists non-negative randomized sequence $\{\delta_t\}$ such that 
    \begin{align}
    \label{eq:delta-recur}
        \H_t\preceq\G_t\preceq(1+\delta_t)\H_t
        \qquad\text{and}\qquad
        \mathbb E_t[\delta_{t+1}]\leq (1-\gamma)(1+Mr_t)^2(\delta_t+2\tilde{c}mMr_t)
    \end{align}
    for all $t$, where $\delta_t$, $\gamma$, and $\tilde c$ are determined as follows.
    \begin{itemize}
    \item For Algorithm~\ref{alg:RQGN} with the BFGS update (Option I), the result \eqref{eq:delta-recur} holds by taking
    \begin{align*}
    \delta_t=\sigma_{\H_t}(\G_t),\quad
    \gamma=\frac{\mu^2}{mL^2},\quad
    \tilde c=1,
    \quad\text{and}\quad\delta_0\leq m(\eta_0-1).
    \end{align*}
    \item For Algorithm~\ref{alg:RQGN} with the fast BFGS update (Option II), the result \eqref{eq:delta-recur} holds by taking
    \begin{align*}
    \delta_t=\sigma_{\H_t}(\G_t),\quad
    \gamma=\frac{1}{m},\quad
    \tilde c=1,
    \quad\text{and}\quad\delta_0\leq m(\eta_0-1).
    \end{align*}
    \item For Algorithm~\ref{alg:RQGN} with the SR1 update (Option III), the result \eqref{eq:delta-recur} holds by taking
    \begin{align*}
    \delta_t=\frac{mL^2\tau_{\H_t}(\G_t)}
    {\mu^2\tr{\H_t}},\quad
    \gamma=\frac{1}{m},\quad
    \tilde c=\frac{L^2}{\mu^2},
    \quad\text{and}\quad
    \delta_0\leq \frac{mL^2(\eta_0-1)}{\mu^2}.
    \end{align*}
    \end{itemize}
\end{lemma}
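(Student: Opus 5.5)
We argue by induction on $t$, carrying the two-sided bound $\H_t\preceq\G_t\preceq(1+\delta_t)\H_t$ together with the one-step expectation bound.

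\textbf{Base case.} By the definition of $\eta_0$ in \eqref{eq:dfn-eta0} we have $\H_0\preceq\G_0\preceq\eta_0\H_0$.
\begin{itemize}
\item For Options I/II, $\delta_0=\tr{\H_0^{-1}(\G_0-\H_0)}\le m(\eta_0-1)$, because $\H_0^{-1/2}(\G_0-\H_0)\H_0^{-1/2}\preceq(\eta_0-1)\I_m$.
\item For SR1, $\tau_{\H_0}(\G_0)=\tr{\G_0-\H_0}\le(\eta_0-1)\tr{\H_0}$, which gives the stated bound on $\delta_0$.
\end{itemize}
For the sandwich $\G_t\preceq(1+\delta_t)\H_t$ with $\delta_t=\sigma_{\H_t}(\G_t)$: the largest eigenvalue of the PSD matrix $\H_t^{-1/2}(\G_t-\H_t)\H_t^{-1/2}$ is at most its trace. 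In the SR1 case, $\G_t-\H_t\preceq\tau\I_m\preceq(\tau/\mu^2)\H_t$, and $\tr{\H_t}\le mL^2$ gives $\tau/\mu^2\le \delta_t$. So the right-hand side of the sandwich follows from the definition of $\delta_t$ once $\G_t\succeq\H_t$ is known.

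\textbf{Inductive step.} Set $r_t=\|\x_{t+1}-\x_t\|$. By Proposition~\ref{prop:Gram_concor}, $\H_{t+1}\preceq(1+Mr_t)\H_t\preceq(1+Mr_t)\G_t=\widetilde{\G}_t$, which is \eqref{eq:scale_G00}. Moreover, $\widetilde{\G}_t\preceq(1+Mr_t)^2(1+\delta_t)\H_{t+1}$, so Lemma~\ref{lm:QN_order} (with $\eta=(1+Mr_t)^2(1+\delta_t)$) yields $\H_{t+1}\preceq\G_{t+1}$.

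Conditioned on $\mathcal F_t$, the point $\x_{t+1}$ and the matrix $\widetilde{\G}_t$ are determined, and $\u_t$ is independent of them. Lemma~\ref{lm:QN_one_iter} applied with $\H=\H_{t+1}$, $\G=\widetilde{\G}_t$ therefore gives $\mathbb E_t[\sigma_{\H_{t+1}}(\G_{t+1})]\le(1-\gamma)\sigma_{\H_{t+1}}(\widetilde{\G}_t)$, and the analogous bound with $\tau$ for SR1. The condition $\mu^2\I\preceq\H_{t+1}\preceq L^2\I$ holds by Propositions~\ref{prop:gram_lip}, \ref{prop:gram_lower} and $\x_{t+1}\in\Omega$.

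\textbf{Main step: bounding the shifted quantity.} It remains to bound $\sigma_{\H_{t+1}}(\widetilde{\G}_t)$ by $(1+Mr_t)^2(\delta_t+2mMr_t)$. Write $s=1+Mr_t$. Proposition~\ref{prop:Gram_concor} gives $\H_{t+1}^{-1}\preceq s\H_t^{-1}$, hence
\[
\tr{\H_{t+1}^{-1}\widetilde{\G}_t}=s\,\tr{\H_{t+1}^{-1}\G_t}\le s^2\,\tr{\H_t^{-1}\G_t}=s^2(\delta_t+m).
\]
Subtracting $m$ gives
\[
\sigma_{\H_{t+1}}(\widetilde{\G}_t)\le s^2\delta_t+m(s^2-1)=s^2\delta_t+m(2Mr_t+M^2r_t^2).
\]
Since $2Mr_t+M^2r_t^2\le 2Mr_t\,s\le 2Mr_t\,s^2$, this is at most $s^2(\delta_t+2mMr_t)$.

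For SR1, use $\tr{\widetilde{\G}_t-\H_{t+1}}=s\,\tr{\G_t}-\tr{\H_{t+1}}$, together with $\tr{\G_t}=\tau_t+\tr{\H_t}$ and $\tr{\H_{t+1}}\ge s^{-1}\tr{\H_t}$. This gives
\[
\tau_{\H_{t+1}}(\widetilde{\G}_t)\le s\,\tau_t+(s-s^{-1})\tr{\H_t}.
\]
Multiply by $mL^2/(\mu^2\tr{\H_{t+1}})$ and use $\tr{\H_t}/\tr{\H_{t+1}}\le s$. The first term becomes $s^2\delta_t$. The second becomes at most $(s^2-1)mL^2/\mu^2\le s^2\cdot 2mMr_t\,L^2/\mu^2$, which explains $\tilde c=L^2/\mu^2$.

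I expect this last step to be the main obstacle: the SR1 normalization changes with $t$, so the trace ratio $\tr{\H_t}/\tr{\H_{t+1}}$ must be controlled carefully.

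Finally, the hypothesis of the lemma ensures $\x_t\in\Omega$ for all $t$, so the induction closes.
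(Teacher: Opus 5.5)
Your proposal is correct and follows essentially the same route as the paper's proof. The structure matches: induction on $\H_t\preceq\G_t$ via Proposition~\ref{prop:Gram_concor} and Lemma~\ref{lm:QN_order}; the trace bound for the upper sandwich; then bounding $\sigma_{\H_{t+1}}(\widetilde{\G}_t)$ (or $\tau_{\H_{t+1}}(\widetilde{\G}_t)$ together with $\tr{\H_t}/\tr{\H_{t+1}}\le 1+Mr_t$) before invoking Lemma~\ref{lm:QN_one_iter}. The differences are only cosmetic: you make the $\mathcal F_t$-measurability of $\x_{t+1},\widetilde{\G}_t$ and the constant $\eta$ in Lemma~\ref{lm:QN_order} explicit, and in the SR1 case you use $(s-s^{-1})s=s^2-1$ where the paper uses $s-s^{-1}\le 2Mr_t$.
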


\begin{remark}
The results of the above Lemma \ref{lm:Gram_approximate} require the condition $\x_t\in \Omega$ holds for all $t$, 
which can be guaranteed by taking appropriate $\Omega$ for both underdetermined and overdetermined settings.
Please refer to Theorems \ref{thm:linear_under} and \ref{thm:linear1n>d}.    
\end{remark}

The results of Lemma~\ref{lm:Gram_approximate} suggest  $\H_t\preceq\G_t\preceq(1+\delta_t)\H_t$ holds for some  $\delta_t\geq 0$.
Hence, we can extend the definition of $\eta_0$ to every iteration by denoting
\begin{align}
\label{eq:def_eta}
    \eta_t\triangleq \min_{\{\eta: 
 \H_t\preceq\G_t\preceq \eta\H_t\}}\eta.
\end{align}

\subsection{The Local Convergence for the Underdetermined Case}
\label{sec:underde}
In this section, we analyze the convergence of our RQGN method for solving the nonlinear equations  \eqref{eq:nonlinear_obj} in the case of $n<d$.
We suppose the singular value of the Jacobian is lower bounded on a local residual region.
\begin{assumption}
\label{ass:Omegan<d}
    We assume that Assumption~\ref{ass:nonsingular} holds with
    \begin{align*}
    \Omega
    &=\Omega_{\mathrm{under}}
    \triangleq\left\{\x:\|\F(\x)\|\leq\frac{\mu^2}{2L_2}\right\},
    \end{align*}
    i.e., there exists a constant $\mu>0$ such that $\sigma_m(\J(\x))\geq \mu$ holds for all $\x\in\Omega_{\mathrm{under}}$.
\end{assumption}
\begin{remark}
   In the global analysis of a modified Gauss--Newton method, \citet{nesterov2007modified} require the singular value of the Jacobian is lower bounded for all $\x\in\RB^d$.
   Here, our Assumption~\ref{ass:Omegan<d} only requires the lower bound holds on the region $\Omega_{\mathrm{under}}$ since we focus on the local convergence analysis.
\end{remark}


Recall that the classical analysis of quasi-Newton methods for minimizing the strongly convex function $f:\BR^d\to\BR$ \citep{rodomanov2021greedy} describes the convergence rates by the scaled gradient norm 
\begin{align*}
\lambda(\vx)\triangleq\sqrt{\nabla f(\x)^{\top}(\nabla^2f(\x))^{-1}\nabla f(\x)}     
\end{align*}
where $\nabla f(\cdot)$ and $\nabla^2 f(\cdot)$ are the gradient and the Hessian of $f(\cdot)$, respectively.
A direct extension of the above metric to nonlinear linear equations in the view of taking~$\mF(\cdot)=\nabla f(\cdot)$ and $\mJ(\cdot)=\nabla^2 f(\vx)$ is
\begin{align}
\label{eq:ordi-measure}
    \hat{\lambda}(\x) \triangleq \sqrt{\F(\x)^{\top}(\J(\x))^{-1}\F(\x)}.
\end{align}
However, for the underdetermined nonlinear equations, the Jacobian $\J(\cdot)$ may non-invertible or even non-square. 

For nonlinear equation problems,
it is worth noting that Proposition~\ref{prop:gram_lower} indicates the Gram matrix $\H(\cdot)$ is positive definite on the local region.
This motivates us to construct the measure of the weighted norm with respect to $\H(\cdot)$, i.e.,
\begin{align*}
    \lambda_{\mathrm{under}}(\x) \triangleq \sqrt{\F(\x)^{\top}(\H(\x))^{-1}\F(\x)}.
\end{align*}
Based on the above metric, we establish the following lemma shows the convergence for the general iteration \eqref{eq:qNupdate} in the underdetermined case.
\begin{lemma}
\label{lm:linear-quadra_n<d}
Under Assumptions~\ref{ass:Jlip} and \ref{ass:Omegan<d}, we additionally suppose the iteration \eqref{eq:qNupdate} in the case of $n<d$ holds
  \begin{align}
  \label{eq:G_condi_2}
      \H(\x)\preceq \G\preceq\eta\H(\x)
      \qquad\text{and}\qquad
      \lambda_{\mathrm{under}}(\x)\leq \frac{\mu^2}{4LL_2}
  \end{align}
for some $\eta\geq1$, then it holds $\x,\x_{+}\in\Omega_{\mathrm{under}}$, $\|\x_{+}-\x\|\leq\lambda_{\mathrm{under}}(\x)$, and
  \begin{align}
  \label{eq:linear-quadra-n<d}
      \lambda_{\mathrm{under}}(\x_{+})\leq \left(1-\frac{1}{\eta}\right)\lambda_{\mathrm{under}}(\x) + \frac{3LL_2}{\mu^2}(\lambda_{\mathrm{under}}(\x))^2.
  \end{align}
\end{lemma}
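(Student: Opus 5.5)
The plan is to write $\lambda\triangleq\lambda_{\mathrm{under}}(\x)$, $\H\triangleq\H(\x)$, $\vv\triangleq\x_+-\x=-\J(\x)^{\top}\G^{-1}\F(\x)$, and then carry out four steps: membership in $\Omega_{\mathrm{under}}$, the step-length bound, a Taylor expansion of $\F(\x_+)$, and finally a change of metric from $\H(\x)$ to $\H(\x_+)$ using Proposition~\ref{prop:Gram_concor}. Throughout, $\H(\x)$ is taken to be positive definite, which is implicit in $\lambda$ and in the first condition of \eqref{eq:G_condi_2}.

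\emph{Step 1 ($\x\in\Omega_{\mathrm{under}}$ and $\|\vv\|\le\lambda$).} Proposition~\ref{prop:gram_lip} gives $\H\preceq L^2\I_m$, so $\|\F(\x)\|\le L\lambda\le \mu^2/(4L_2)$, and hence $\x\in\Omega_{\mathrm{under}}$. For the step length,
\[
\|\vv\|^2=\F(\x)^{\top}\G^{-1}\H\G^{-1}\F(\x).
\]
Since $\H\preceq\G$, we have $\G^{-1/2}\H\G^{-1/2}\preceq\I_m$, so $\G^{-1}\H\G^{-1}\preceq\G^{-1}\preceq\H^{-1}$. This gives $\|\vv\|\le\lambda$.

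\emph{Step 2 (Taylor expansion).} We have $\J(\x)\vv=-\H\G^{-1}\F(\x)$. By Assumption~\ref{ass:Jlip},
\[
\F(\x_+)=(\I_m-\H\G^{-1})\F(\x)+\R,\qquad \|\R\|\le \tfrac{L_2}{2}\|\vv\|^2\le\tfrac{L_2}{2}\lambda^2 .
\]
The key identity is
\[
\H^{-1/2}(\I_m-\H\G^{-1})\F(\x)=(\I_m-\H^{1/2}\G^{-1}\H^{1/2})\H^{-1/2}\F(\x).
\]
From $\H\preceq\G\preceq\eta\H$ we get $\tfrac1\eta\I_m\preceq\H^{1/2}\G^{-1}\H^{1/2}\preceq\I_m$, so the bracket has spectrum in $[0,1-1/\eta]$. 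Hence
\[
\|\H^{-1/2}(\I_m-\H\G^{-1})\F(\x)\|\le(1-1/\eta)\lambda .
\]
Multiplying back by $\H^{1/2}$, whose norm is at most $L$, we also get $\|(\I_m-\H\G^{-1})\F(\x)\|\le L\lambda$.

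\emph{Step 3 ($\x_+\in\Omega_{\mathrm{under}}$).} Combining the two bounds from Step~2,
\[
\|\F(\x_+)\|\le L\lambda+\tfrac{L_2}{2}\lambda^2\le \frac{\mu^2}{4L_2}+\frac{\mu^4}{32L^2L_2}\le\frac{\mu^2}{2L_2},
\]
where the last inequality uses $\mu\le L$. So $\x_+\in\Omega_{\mathrm{under}}$. Step~3 must precede Step~4, because Proposition~\ref{prop:Gram_concor} requires both points to lie in $\Omega=\Omega_{\mathrm{under}}$.

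\emph{Step 4 (change of metric).} Now apply Proposition~\ref{prop:Gram_concor} with $M=2LL_2/\mu^2$. It gives $\H(\x_+)^{-1}\preceq(1+M\|\vv\|)\H^{-1}$, and therefore
\[
\lambda_{\mathrm{under}}(\x_+)\le\sqrt{1+M\lambda}\,\big\|\H^{-1/2}\F(\x_+)\big\|\le\Big(1+\tfrac{LL_2}{\mu^2}\lambda\Big)\Big((1-\tfrac1\eta)\lambda+\tfrac{L_2}{2\mu}\lambda^2\Big),
\]
using $\sqrt{1+s}\le1+s/2$ and $\|\H^{-1/2}\R\|\le\|\R\|/\mu$. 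Expanding the product:
\begin{itemize}
\item The cross term $\tfrac{LL_2}{\mu^2}\lambda\cdot(1-\tfrac1\eta)\lambda$ is at most $\tfrac{LL_2}{\mu^2}\lambda^2$.
\item The remaining terms are $\tfrac{L_2}{2\mu}\lambda^2\big(1+\tfrac{LL_2}{\mu^2}\lambda\big)$. Since $\tfrac{LL_2}{\mu^2}\lambda\le\tfrac14$, this is at most $\tfrac{5}{8}\tfrac{L_2}{\mu}\lambda^2\le\tfrac58\tfrac{LL_2}{\mu^2}\lambda^2$, using $1/\mu\le L/\mu^2$.
\end{itemize}
Summing, the quadratic coefficient is at most $(1+5/8)\tfrac{LL_2}{\mu^2}$, which is below $3LL_2/\mu^2$ and gives \eqref{eq:linear-quadra-n<d}.

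I expect the main obstacle to be Step~2, specifically obtaining the contraction factor $(1-1/\eta)$ in the $\H^{-1}$-norm. This depends on rewriting $\I_m-\H\G^{-1}$ in the symmetrized form $\I_m-\H^{1/2}\G^{-1}\H^{1/2}$. Once that is in place, the rest is constant bookkeeping together with the ordering of Steps~3 and~4.
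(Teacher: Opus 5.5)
Your proposal is correct and follows essentially the same route as the paper's proof. That route has four pieces: the step-length bound $\|\x_+-\x\|^2=\F(\x)^{\top}\G^{-1}\H(\x)\G^{-1}\F(\x)\le\lambda_{\mathrm{under}}(\x)^2$; the Taylor expansion $\F(\x_+)=(\I_n-\H(\x)\G^{-1})\F(\x)+\text{remainder}$ with the symmetrized contraction $\|\I_n-\H(\x)^{1/2}\G^{-1}\H(\x)^{1/2}\|\le 1-1/\eta$; establishing $\x_+\in\Omega_{\mathrm{under}}$ before invoking Proposition~\ref{prop:Gram_concor}; and the final change of metric. The differences are cosmetic: you bound $\|\F(\x_+)\|$ directly rather than through $\H(\x)^{1/2}\H(\x)^{-1/2}$, and your use of $\sqrt{1+s}\le 1+s/2$ gives the sharper quadratic constant $13/8$ in place of $3$.
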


We now combine results of Lemma~\ref{lm:linear-quadra_n<d}, the partial relation in Proposition~\ref{prop:Gram_concor}, and the partial order-preserving property in Lemma~\ref{lm:QN_order} to establish the following explicit linear convergence rate for our methods.
\begin{theorem}
\label{thm:linear_under}
    Under Assumptions~\ref{ass:Jlip} and \ref{ass:Omegan<d}, we run Algorithm~\ref{alg:RQGN} with the initial point $\x_0$ such that
    \begin{align}
        \label{eq:linear_initial_condi_2}
           \lambda_{\mathrm{under}}(\x_0)\leq \frac{\mu^2}{20\eta_0LL_2}.
    \end{align}
    Then it holds $\x_t\in\Omega_{\mathrm{under}}$ for all $t$. Moreover, we also have
    \begin{align}
\label{eq:induc_lambda_n<d}
   \H_t \preceq \G_t\preceq \eta_0\exp{\left(2\sum_{i=0}^{t-1}Mr_i\right)}\H_t \preceq \frac{3\eta_0}{2}\H_t~\text{and}~   \lambda_{\mathrm{under}}(\x_t)\leq \left(1-\frac{1}{2\eta_0}\right)^t\lambda_{\mathrm{under}}(\x_0).
\end{align}
\end{theorem}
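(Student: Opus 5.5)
We argue by induction on $t$, carrying three hypotheses simultaneously. The first is $\x_t\in\Omega_{\mathrm{under}}$. The second is the sandwich $\H_t\preceq\G_t\preceq\eta_0\exp(2\sum_{i=0}^{t-1}Mr_i)\H_t\preceq\frac{3\eta_0}{2}\H_t$. The third is $\lambda_{\mathrm{under}}(\x_t)\le(1-\frac{1}{2\eta_0})^t\lambda_{\mathrm{under}}(\x_0)$. The base case $t=0$ is immediate from the definition~\eqref{eq:dfn-eta0} of $\eta_0$. We also need $\x_0\in\Omega_{\mathrm{under}}$. This follows from $\|\F(\x_0)\|\le L\lambda_{\mathrm{under}}(\x_0)$, which holds because $\H_0\preceq L^2\I_m$ by Proposition~\ref{prop:gram_lip}, together with~\eqref{eq:linear_initial_condi_2} and $\eta_0\ge1$.

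For the inductive step, assume the three hypotheses hold up to $t$. Set $\eta=\eta_0\exp(2\sum_{i<t}Mr_i)\le 3\eta_0/2$. Since $\lambda_{\mathrm{under}}(\x_t)\le\lambda_{\mathrm{under}}(\x_0)\le\mu^2/(4LL_2)$, the hypotheses of Lemma~\ref{lm:linear-quadra_n<d} hold. The lemma gives $\x_{t+1}\in\Omega_{\mathrm{under}}$ and $r_t\le\lambda_{\mathrm{under}}(\x_t)$, along with the contraction~\eqref{eq:linear-quadra-n<d}. We then write
\[
\lambda_{\mathrm{under}}(\x_{t+1})\le\Big(1-\tfrac{2}{3\eta_0}+\tfrac{3LL_2}{\mu^2}\lambda_{\mathrm{under}}(\x_0)\Big)\lambda_{\mathrm{under}}(\x_t).
\]
Using~\eqref{eq:linear_initial_condi_2}, the quadratic term is at most $\frac{3}{20\eta_0}\le\frac{1}{6\eta_0}$. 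The factor is therefore at most $1-\frac{1}{2\eta_0}$, which gives the third hypothesis at $t+1$.

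For the matrix sandwich, Proposition~\ref{prop:Gram_concor} applies because both $\x_t$ and $\x_{t+1}$ lie in $\Omega$. It gives $\H_{t+1}\preceq(1+Mr_t)\H_t\preceq(1+Mr_t)\G_t=\widetilde{\G}_t$. In the other direction, $\widetilde{\G}_t\preceq(1+Mr_t)\eta\H_t\preceq(1+Mr_t)^2\eta\H_{t+1}\preceq e^{2Mr_t}\eta\H_{t+1}$. Lemma~\ref{lm:QN_order} now applies to the pair $(\widetilde{\G}_t,\H_{t+1})$ with parameter $e^{2Mr_t}\eta$. It shows that every option yields $\H_{t+1}\preceq\G_{t+1}\preceq\eta_0\exp(2\sum_{i\le t}Mr_i)\H_{t+1}$, and this holds deterministically, for every realization of $\u_t$.

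The main obstacle is the final bound $\exp(2\sum_{i\le t}Mr_i)\le 3/2$, which needs $\sum_i r_i$ to be uniformly small. We control it with the geometric decay just established:
\[
\sum_{i=0}^{t}r_i\le\sum_{i}\lambda_{\mathrm{under}}(\x_i)\le2\eta_0\lambda_{\mathrm{under}}(\x_0)\le\frac{\mu^2}{10LL_2}.
\]
Hence $2M\sum_i r_i\le\frac{4LL_2}{\mu^2}\cdot\frac{\mu^2}{10LL_2}=0.4$, and $e^{0.4}\approx1.49\le3/2$. This is exactly why the constant $20$ appears in~\eqref{eq:linear_initial_condi_2}. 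The partial sums in this estimate involve $r_t$, which depends on $\lambda_{\mathrm{under}}(\x_t)$ bounded at step $t$. The order of the induction must therefore be: first apply Lemma~\ref{lm:linear-quadra_n<d} with the step-$t$ bound $\eta\le3\eta_0/2$, then update the sum, then verify the new matrix bound.
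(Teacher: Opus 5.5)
Your proposal is correct and follows essentially the same induction as the paper. You apply Lemma~\ref{lm:linear-quadra_n<d} with the current sandwich constant $\le 3\eta_0/2$ to get $\x_{t+1}\in\Omega_{\mathrm{under}}$, $r_t\le\lambda_{\mathrm{under}}(\x_t)$ and the contraction factor $1-\frac{2}{3\eta_0}+\frac{3}{20\eta_0}\le 1-\frac{1}{2\eta_0}$. You then propagate the sandwich through Proposition~\ref{prop:Gram_concor} and Lemma~\ref{lm:QN_order}, and bound $2M\sum_i r_i\le 4\eta_0 M\lambda_{\mathrm{under}}(\x_0)\le 2/5$ so that $e^{2/5}\le 3/2$. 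The only difference is cosmetic: you check $\x_0\in\Omega_{\mathrm{under}}$ directly from $\|\F(\x_0)\|\le L\lambda_{\mathrm{under}}(\x_0)$ rather than citing the lemma, which is the same argument.
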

\begin{remark}
Unlike existing quasi-Newton methods for solving underdetermined case that require the initial Jacobian estimator to be sufficiently close to the exact Jacobian~\citep{vater2024convergence,martinez1991quasi},
the initial condition $\G_0\succeq\H_0$ in Algorithm~\ref{alg:RQGN} can be easily satisfied by taking $\G_0=L^2\I_n$, which leads to $\eta_0\leq L^2/\mu^2$.  
\end{remark}
Theorem~\ref{thm:linear_under} guarantees $\x_t\in\Omega_{\mathrm{under}}$ holds for all $t$, verifying the trajectory condition required by Lemma~\ref{lm:Gram_approximate} with $\Omega=\Omega_{\mathrm{under}}$. 
Hence, the sequence $\{\eta_t\}$ follows equation \eqref{eq:def_eta} is well-defined along the iterates.
Combining Lemma~\ref{lm:Gram_approximate} and Theorem~\ref{thm:linear_under}, we can show that that the sequence $\{\eta_t\}$ converges to $1$ in expectation and Algorithm~\ref{alg:RQGN} has the following explicit local superlinear convergence for the underdetermined case.

\begin{theorem}
\label{thm:n<dsuperlinear}
    Under Assumptions~\ref{ass:Jlip} and \ref{ass:Omegan<d}, run Algorithm~\ref{alg:RQGN} with $\x_0\in\Omega_{\rm under}$ satisfying
    \begin{align}
    \label{eq:initial_condi_n<d}
        \lambda_{\mathrm{under}}(\x_0)\leq \frac{\mu^2(1-\gamma)}{20\tilde{c}nLL_2\eta_0},
    \end{align}
where the values $\gamma$ and $\tilde{c}$ for each of Options I--III follow Lemma~\ref{lm:Gram_approximate}.
Then we have
    \begin{align}
    \label{eq:expeta_tlambda_t}
        \EBP{\eta_t-1}\leq \left(1-\gamma\right)^tq_{\text{\rm under}}\qquad\text{and}\qquad\EBP{\frac{\lambda_{\mathrm{under}}(\x_{t+1})}{\lambda_{\mathrm{under}}(\x_t)}}\leq (1-\gamma)^tq_{\text{\rm under}},
    \end{align}
where $q_{\text{\rm under}}\triangleq {\rm e}^{11/10}(\delta_0+7(1-\gamma)/(20\eta_0))$
and the value of $\delta_0$ for each of Options I--III follows Lemma~\ref{lm:Gram_approximate}.
Moreover, for all $p\in(0,1)$, there exists $t_0=\OM(\log(1+q_{\text{\rm under}}/(p\gamma^2))/\gamma)$ such that it holds
    \begin{align*}
        \lambda_{\mathrm{under}}(\x_{t_0+t})\leq \left(1-\frac{\gamma}{\gamma+1}\right)^{\frac{t(t-1)}{2}}\left(\frac{1}{2}\right)^{t}\left(1-\frac{1}{2\eta_0}\right)^{t_0}\lambda_{\mathrm{under}}(\x_0)
    \end{align*}
with probability at least $1-p$ for all $t\geq1$.
\end{theorem}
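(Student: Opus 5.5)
The plan is to combine the deterministic linear-rate guarantee of Theorem~\ref{thm:linear_under} with the in-expectation recursion of Lemma~\ref{lm:Gram_approximate}, and then turn expectations into a high-probability statement via Markov's inequality and a union bound.

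\emph{Verifying the earlier hypotheses.} Since $\gamma\le 1$, $\tilde c\ge1$ and $n\ge1$, condition \eqref{eq:initial_condi_n<d} implies \eqref{eq:linear_initial_condi_2}. Theorem~\ref{thm:linear_under} therefore gives $\x_t\in\Omega_{\mathrm{under}}$ for all $t$, the bound $\G_t\preceq\frac{3\eta_0}{2}\H_t$, and the geometric decay $\lambda_{\mathrm{under}}(\x_t)\le(1-\frac{1}{2\eta_0})^t\lambda_{\mathrm{under}}(\x_0)$. Lemma~\ref{lm:linear-quadra_n<d} gives $r_t\le\lambda_{\mathrm{under}}(\x_t)$, which is deterministic. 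Summing the geometric series,
\[
\sum_t Mr_t\le 2\eta_0 M\lambda_{\mathrm{under}}(\x_0)\le \frac{1-\gamma}{5\tilde c n},
\]
using $M=2LL_2/\mu^2$. In particular this sum is at most $1/5$, so $\prod_t(1+Mr_t)^2\le \mathrm{e}^{2/5}$.

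\emph{Bounding $\EB[\delta_t]$.} Take full expectations in the recursion \eqref{eq:delta-recur}. Because the $r_t$ are bounded deterministically, we may pull them out as worst-case constants. Unrolling then gives
\[
\EB[\delta_t]\le (1-\gamma)^t\prod_{i<t}(1+Mr_i)^2\Big(\delta_0+2\tilde c m\sum_i Mr_i(1-\gamma)^{-i-1}\Big).
\]
The only delicate point is the factor $(1-\gamma)^{-i-1}$. I would absorb it by using a sharper bound on $r_i$: its decay rate $(1-\frac{1}{2\eta_0})^i$ must dominate $(1-\gamma)^i$. If it does not, I would fall back on superlinear-stage control through the already-established bound $\eta_i\le 3\eta_0/2$. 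The constant $7(1-\gamma)/(20\eta_0)$ and the factor $\mathrm{e}^{11/10}$ in $q_{\mathrm{under}}$ suggest precisely this bookkeeping. Since $\eta_t-1\le\delta_t$ (as $\H_t\preceq\G_t\preceq(1+\delta_t)\H_t$), this yields $\EB[\eta_t-1]\le(1-\gamma)^tq_{\mathrm{under}}$.

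\emph{Ratio bound.} Apply Lemma~\ref{lm:linear-quadra_n<d} with $\eta=\eta_t$:
\[
\frac{\lambda_{\mathrm{under}}(\x_{t+1})}{\lambda_{\mathrm{under}}(\x_t)}\le (\eta_t-1)+\frac{3LL_2}{\mu^2}\lambda_{\mathrm{under}}(\x_t),
\]
where we used $1-1/\eta\le\eta-1$. The second term is deterministic and decays geometrically, so it can be absorbed into the same $(1-\gamma)^tq_{\mathrm{under}}$ bound; this is where part of the slack in $q_{\mathrm{under}}$ is spent. This establishes \eqref{eq:expeta_tlambda_t}.

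\emph{High-probability statement.} Let $X_t$ denote the ratio above. By Markov's inequality,
\[
\Pr\big[X_t>(1-\gamma)^{t/2}\cdots\big]\le\ldots
\]
More precisely, choose thresholds $\epsilon_t=(1-\frac{\gamma}{\gamma+1})^{t-t_0}/2$, i.e.\ ratio $(1+\gamma)^{-(t-t_0)}/2$. Then $\Pr[X_t>\epsilon_t]\le 2q_{\mathrm{under}}(1-\gamma)^t(1+\gamma)^{t-t_0}$. Because $(1-\gamma)(1+\gamma)<1$, the sum over $t\ge t_0$ is at most $2q_{\mathrm{under}}(1-\gamma)^{t_0}/\gamma^2$, and this is $\le p$ once $t_0=\OM(\log(1+q_{\mathrm{under}}/(p\gamma^2))/\gamma)$.

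\emph{Chaining.} On this event, multiply the ratios for $t_0,\dots,t_0+t-1$. This gives $(1/2)^t(1+\gamma)^{-t(t-1)/2}$, and combining with $\lambda_{\mathrm{under}}(\x_{t_0})\le(1-\frac1{2\eta_0})^{t_0}\lambda_{\mathrm{under}}(\x_0)$ yields the claimed bound.

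The main obstacle is the second step: closing the expectation recursion with the exact constant $q_{\mathrm{under}}$ when the geometric decay of $r_i$ may be slower than $(1-\gamma)^i$.
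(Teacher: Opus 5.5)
Your verification that \eqref{eq:initial_condi_n<d} implies the hypotheses of Theorem~\ref{thm:linear_under} is fine. So is your Markov/union-bound argument over $t\ge t_0$, a valid and slightly more direct substitute for the paper's use of Lemma~\ref{lm:lin_lemma26}, and so is the final chaining. The gap is in the two middle steps, and it is exactly the obstacle you flag without resolving. Suppose you unroll \eqref{eq:delta-recur} for $\delta_t$ alone, treating $2\tilde{c}nMr_t$ as an additive forcing term bounded by $2\tilde{c}nM(1-\frac{1}{2\eta_0})^t\lambda_{\mathrm{under}}(\x_0)$. This produces $\sum_{i<t}\bigl((1-\frac{1}{2\eta_0})/(1-\gamma)\bigr)^i$, which grows geometrically whenever $\frac{1}{2\eta_0}<\gamma$. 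That is the typical regime: Option~II has $\gamma=1/n$, while with $\G_0=L^2\I_n$ one only knows $\eta_0\le L^2/\mu^2$, which can far exceed $n/2$. There you only get $\EB[\delta_t]\lesssim(1-\frac{1}{2\eta_0})^t$, not $(1-\gamma)^tq_{\mathrm{under}}$. Neither fallback rescues this. Nothing you have gives an a priori bound on $r_i$ decaying faster than $(1-\frac{1}{2\eta_0})^i$, and $\eta_i\le 3\eta_0/2$ only reproduces that same linear rate. Your ratio step fails for the same reason: $\frac{3LL_2}{\mu^2}\lambda_{\mathrm{under}}(\x_t)$ is random, and its deterministic bound decays only like $(1-\frac{1}{2\eta_0})^t$, so it cannot be absorbed into $(1-\gamma)^tq_{\mathrm{under}}$.

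The missing idea is to couple the two quantities in one potential, $\xi_t=\delta_t+\frac{7\tilde{c}nLL_2}{\mu^2}\lambda_{\mathrm{under}}(\x_t)$. You then use Lemma~\ref{lm:linear-quadra_n<d} with $\eta=1+\delta_t$ in the form $\lambda_{\mathrm{under}}(\x_{t+1})\le\xi_t\,\lambda_{\mathrm{under}}(\x_t)$; note that $\x_{t+1}$ is $\mathcal F_t$-measurable. Combined with $r_t\le\lambda_{\mathrm{under}}(\x_t)$ and $\delta_t+2\tilde{c}nM\lambda_{\mathrm{under}}(\x_t)\le\xi_t$, this turns the additive forcing into a multiplicative perturbation:
\[
\EB_t[\xi_{t+1}]\le\Bigl((1-\gamma)\bigl(1+M\lambda_{\mathrm{under}}(\x_t)\bigr)^2+\tfrac{7\tilde{c}nLL_2}{\mu^2}\lambda_{\mathrm{under}}(\x_t)\Bigr)\xi_t\le(1-\gamma)\exp\Bigl(\tfrac{11\tilde{c}nLL_2}{(1-\gamma)\mu^2}\lambda_{\mathrm{under}}(\x_t)\Bigr)\xi_t .
\]
Now only the summability $\sum_t\lambda_{\mathrm{under}}(\x_t)\le 2\eta_0\lambda_{\mathrm{under}}(\x_0)$ matters, not the rate of decay. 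By \eqref{eq:initial_condi_n<d} the product of the exponential factors is at most ${\rm e}^{11/10}$, so $\EB[\xi_t]\le(1-\gamma)^t{\rm e}^{11/10}\xi_0\le(1-\gamma)^tq_{\mathrm{under}}$. Both bounds in \eqref{eq:expeta_tlambda_t} then follow from $\eta_t-1\le\delta_t\le\xi_t$ and $\lambda_{\mathrm{under}}(\x_{t+1})/\lambda_{\mathrm{under}}(\x_t)\le\xi_t$. After that, your high-probability argument goes through unchanged.
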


\subsection{The Local Convergence for the Overdetermined Case}
\label{sec:overfit}

This section considers the overdetermined case of $n\geq d$ by focusing on the nonlinear least square formulation \eqref{eq:nonlinear_least_obj}. 
Note that the gradient and the Hessian of the objective
$\phi(\x)=(1/2)\|\F(\x)\|^2$ can be written as
\begin{align*}
\nabla \phi(\vx)=\mJ(\vx)^\top\mF(\vx)
\qquad\text{and}\qquad
\nabla^2\phi(\x) =\H(\x) + \Q(\x),
\end{align*}
where $\mH(\vx)=\mJ(\vx)^\top\mJ(\vx)$ and $\Q(\x)\triangleq \sum_{i=1}^n F_i(\x)\nabla^2 F_i(\x)$.
We suppose that the existence of the stationary point  $\x_*\in\BR^d$ such that 
$\nabla\phi(\vx_*)=\mJ(\x_*)^{\top}\F(\x_*)=\0$
and the lower boundedness for the singular value of the Jacobian $\J(\x_*)$.
\begin{assumption}
\label{ass:nonsingular_J^*}
We assume there exists $\x_*\in\BR^d$ such that $\J(\x_*)^{\top}\F(\x_*)=\0$ and the Jacobian at $\x_*$ satisfies $\sigma_m(\J(\x_*))\geq \mu_*$ for some $\mu_*>0$, where $m=\min\{n,d\}$.
\end{assumption}
Combining Assumptions~\ref{ass:Jlip} and \ref{ass:nonsingular_J^*} suggests  Assumption~\ref{ass:nonsingular} can be satisfied by taking a proper region $\Omega$, which is introduced in the following proposition. 
\begin{proposition}
\label{prop:local_nonsingularity}
    Under Assumptions~\ref{ass:Jlip} and \ref{ass:nonsingular_J^*}, Assumption~\ref{ass:nonsingular} is satisfied with
    \begin{align*}
    \Omega
    &=\Omega_{\mathrm{over}}
    \triangleq\left\{\x:\|\x-\x_*\|\leq \frac{\mu^2_*}{4LL_2}\right\}
    \quad\text{and}\quad
    \mu=\frac{\mu_*}{\sqrt{2}}.
    \end{align*}
\end{proposition}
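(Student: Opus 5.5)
The plan is to use Weyl's inequality for singular values together with the Lipschitz continuity of the Jacobian from Assumption~\ref{ass:Jlip}. Fix any $\x\in\Omega_{\mathrm{over}}$. Singular values are $1$-Lipschitz with respect to the spectral norm, so
\begin{align*}
\sigma_m(\J(\x))\geq \sigma_m(\J(\x_*)) - \|\J(\x)-\J(\x_*)\| \geq \mu_* - L_2\|\x-\x_*\|\geq \mu_* - \frac{\mu_*^2}{4L}.
\end{align*}

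Next I bound the second term using $\mu_*\leq L$. This holds because $\mu_*\leq\sigma_m(\J(\x_*))\leq\|\J(\x_*)\|\leq L$. It gives $\mu_*^2/(4L)\leq \mu_*/4$, hence $\sigma_m(\J(\x))\geq \tfrac{3}{4}\mu_*\geq \mu_*/\sqrt{2}$, since $3/4\approx0.75>0.7071$.

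This shows $\sigma_m(\J(\x))\geq\mu$ with $\mu=\mu_*/\sqrt2$ on all of $\Omega_{\mathrm{over}}$. The set is non-empty because it contains $\x_*$. So Assumption~\ref{ass:nonsingular} holds with these choices.

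If one prefers to work at the level of Gram matrices, there is an alternative route. Use Proposition~\ref{prop:gram_lip}, which gives $\|\H(\x)-\H(\x_*)\|\leq 2LL_2\|\x-\x_*\|\leq \mu_*^2/2$. Then $\H(\x)\succeq \H(\x_*)-\tfrac{\mu_*^2}{2}\I_m\succeq \tfrac{\mu_*^2}{2}\I_m$, which yields $\sigma_m(\J(\x))\geq\mu_*/\sqrt2$ directly. This explains the constants $4LL_2$ and $\sqrt2$, so I would present this version to match the statement.

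The only point requiring care is that the Gram matrix $\H$ is defined as $\J\J^\top$ or $\J^\top\J$ depending on the case, and its smallest eigenvalue equals $\sigma_m(\J)^2$ in either case. There is no real obstacle.
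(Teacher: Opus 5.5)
Your proof is correct. The Gram-matrix version you say you would present is the paper's argument. It uses the $2LL_2$-Lipschitz continuity of $\H(\cdot)$ from Proposition~\ref{prop:gram_lip} to get $\H(\x)\succeq\H(\x_*)-2LL_2\|\x-\x_*\|\I_d\succeq\frac{\mu_*^2}{2}\I_d$, and this is exactly where the constants $4LL_2$ and $\sqrt{2}$ come from.

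Your primary route through Weyl's perturbation inequality for singular values is a genuinely different argument, and a slightly sharper one. It works directly at the Jacobian level and needs only the $L_2$-Lipschitz continuity of $\J$, together with $\mu_*\leq\|\J(\x_*)\|\leq L$ to compare radii. It yields $\sigma_m(\J(\x))\geq\frac{3}{4}\mu_*$. In fact, the larger ball $\|\x-\x_*\|\leq(1-1/\sqrt{2})\mu_*/L_2$, which contains $\Omega_{\mathrm{over}}$, would already give $\mu=\mu_*/\sqrt{2}$.

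The Gram route's advantage is consistency with the rest of the analysis, which works with $\H(\cdot)$ and the constant $M=2LL_2/\mu^2$ throughout. You also correctly observe that $\x_*\in\Omega_{\mathrm{over}}$ makes the set non-empty, which the paper leaves implicit.
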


We then impose the following smoothness assumption with respect to the stationary point $\vx_*$.
\begin{assumption}
\label{ass:x_star_philip}
    We assume there exists a constant $\rho\geq 0$ such that 
    \begin{align}
        \label{eq:x_star_philip}
        \big\|\J(\x)^{\top}\F(\x)-(\H(\x_*)+\Q(\x_*))(\x-\x_*)\big\| \leq \rho \|\x-\x_*\|^2.
    \end{align}
    for all $\vx\in\BR^d$, where $\vx_*\in\BR^d$ is the stationary point of $\phi(\cdot)$ such that $\mJ(\vx_*)^\top\mF(\vx_*)=\vzero$.
\end{assumption}
The following proposition verifies that our Assumption \ref{ass:x_star_philip} is more general than several classical smoothness assumptions used in prior studies for nonlinear equations \citep{gould2019convergence,zhou2010global,vater2024convergence,liu2023block}.
\begin{proposition}
\label{prop:assumption-sati}
    Assumption~\ref{ass:x_star_philip} can be verified in one of the following three cases.
    \begin{enumerate}
        \item  Suppose Assumption~\ref{ass:Jlip} holds and the Hessian $\nabla^2 F_i(\cdot)$ is $L_3$-Lipschitz continuous with respect to the stationary point of $\phi(\cdot)$ for all $i\in\BR^n$
         \citep[Assumption AS.1]{gould2019convergence} \citep[Lemma 2.9--2.11]{zhou2010global}, i.e., there exists constant $L_3>0$ such~that
        \begin{align*}
        \|\nabla^2F_i(\x)-\nabla^2 F_i(\x_*)\|
        \leq L_3\|\x-\x_*\|    
        \end{align*}
        for all $\vx\in\BR^d$, where $\vx_*\in\BR^d$ is the stationary point of $\phi(\cdot)$ such that $\mJ(\vx_*)^\top\mF(\vx_*)=\vzero$.
        Then for all $\x\in\BR^d$ satisfying $\|\x-\x_*\|\leq\delta$, Assumption~\ref{ass:x_star_philip} holds with
        \begin{align}\label{eq:rho-cond-1}
        \rho
        =\frac{1}{2}\Big(2LL_2+L\sum_{i=1}^{n}\|\nabla^2 F_i(\x_*)\|
        +L_3\sqrt{n}L\delta+L_3\|\F(\x_*)\|_1\Big),
        \end{align}
        where $\|\cdot\|_1$ is the $\ell_1$-norm.
        \item  Suppose the Hessian of $\phi(\cdot)$ is $\rho_0$-Lipschitz continuous \citep[Assumption B]{zhou2010global} , i.e., there exists a constant $\rho_0>0$ such that
        \begin{align}\label{eq:phi-2-Lip-xy}
        \|\nabla^2\phi(\x)-\nabla^2\phi(\y)\|
        \leq\rho_0\|\x-\y\|
        \end{align}
        for all $\vx,\vy\in\BR^d$.
        Then Assumption~\ref{ass:x_star_philip} holds with $\rho=\rho_0/2$.
        \item Suppose Assumption~\ref{ass:Jlip} holds and the point $\x_*\in\BR^d$ is the solution of the nonlinear equations such that $\F(\x_*)= \0$ \citep{vater2024convergence, liu2023block}, then Assumption~\ref{ass:x_star_philip} holds with $\rho = {3LL_2}/{2}$.
    \end{enumerate}
\end{proposition}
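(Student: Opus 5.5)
We prove the three cases of Proposition~\ref{prop:assumption-sati} separately. Each time we write the residual
\begin{align*}
\mathbf{r}(\x)\triangleq\J(\x)^{\top}\F(\x)-(\H(\x_*)+\Q(\x_*))(\x-\x_*)
\end{align*}
as the remainder of a first-order Taylor expansion of $\nabla\phi$ around $\x_*$. Since $\nabla\phi(\x_*)=\J(\x_*)^{\top}\F(\x_*)=\0$ and $\nabla^2\phi(\x_*)=\H(\x_*)+\Q(\x_*)$, we have
\begin{align*}
\mathbf{r}(\x)=\nabla\phi(\x)-\nabla\phi(\x_*)-\nabla^2\phi(\x_*)(\x-\x_*).
\end{align*}

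\textbf{Case 2.} This is the easiest case. We use the integral form
\begin{align*}
\nabla\phi(\x)-\nabla\phi(\x_*)=\int_0^1\nabla^2\phi(\x_*+s(\x-\x_*))(\x-\x_*)\,ds.
\end{align*}
Subtracting $\nabla^2\phi(\x_*)(\x-\x_*)$ and applying the $\rho_0$-Lipschitz bound \eqref{eq:phi-2-Lip-xy} inside the integral gives $\|\mathbf{r}(\x)\|\leq\int_0^1\rho_0 s\|\x-\x_*\|^2ds=(\rho_0/2)\|\x-\x_*\|^2$.

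\textbf{Case 3.} Here $\F(\x_*)=\0$, so $\Q(\x_*)=\0$, and $\J(\x)^{\top}\F(\x_*)=\0$. We split
\begin{align*}
\mathbf{r}(\x)=\J(\x)^{\top}\big(\F(\x)-\F(\x_*)-\J(\x_*)(\x-\x_*)\big)+\big(\J(\x)-\J(\x_*)\big)^{\top}\J(\x_*)(\x-\x_*).
\end{align*}
For the first term, the Taylor remainder of $\F$ with $L_2$-Lipschitz Jacobian is bounded by $(L_2/2)\|\x-\x_*\|^2$. Multiplying by $\|\J(\x)\|\leq L$ gives $(LL_2/2)\|\x-\x_*\|^2$. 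For the second term, Assumption~\ref{ass:Jlip} gives at most $L_2\|\x-\x_*\|\cdot L\|\x-\x_*\|$. Summing the two bounds gives $\rho=3LL_2/2$.

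\textbf{Case 1.} This is the main computation. We expand
\begin{align*}
\J(\x)^{\top}\F(\x)=\sum_i F_i(\x)\nabla F_i(\x).
\end{align*}
We add and subtract to split $\mathbf{r}(\x)$ into three pieces:
\begin{enumerate}
\item a Gauss--Newton part, $\J(\x)^{\top}(\F(\x)-\F(\x_*))-\H(\x_*)(\x-\x_*)$;
\item a zeroth-order part, $\J(\x)^{\top}\F(\x_*)-\J(\x_*)^{\top}\F(\x_*)$, which is the same as $\J(\x)^{\top}\F(\x_*)$ because $\J(\x_*)^{\top}\F(\x_*)=\0$;
\item the correction $-\Q(\x_*)(\x-\x_*)$.
\end{enumerate}

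The Gauss--Newton part is handled as in Case 3. Write $\F(\x)-\F(\x_*)=\J(\x_*)(\x-\x_*)+O(L_2\|\x-\x_*\|^2/2)$ and $\J(\x)-\J(\x_*)=O(L_2\|\x-\x_*\|)$. This contributes about $\tfrac{3}{2}LL_2\|\x-\x_*\|^2$. The paper's constant $LL_2$ suggests a slightly tighter grouping, possibly by moving some terms into the other pieces.

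For the remaining two pieces, note
\begin{align*}
\J(\x)^{\top}\F(\x_*)-\Q(\x_*)(\x-\x_*)=\sum_i F_i(\x_*)\big(\nabla F_i(\x)-\nabla F_i(\x_*)-\nabla^2F_i(\x_*)(\x-\x_*)\big).
\end{align*}
Each bracket is a Taylor remainder of $\nabla F_i$. Using the integral form and the $L_3$-Lipschitz Hessian at $\x_*$, each bracket is bounded by $(L_3/2)\|\x-\x_*\|^2$. Weighting by $|F_i(\x_*)|$ and summing gives $(L_3/2)\|\F(\x_*)\|_1\|\x-\x_*\|^2$.

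\textbf{Main obstacle.} The hard step is reproducing the exact constant in \eqref{eq:rho-cond-1}. The terms $L\sum_i\|\nabla^2F_i(\x_*)\|$ and $L_3\sqrt{n}L\delta$ indicate that the authors instead expanded $\F(\x)-\F(\x_*)$ to second order. The resulting quadratic term $\tfrac12\sum_i(\x-\x_*)^{\top}\nabla^2F_i(\cdot)(\x-\x_*)$ is then multiplied by $\J(\x)^{\top}$, with $\|\nabla^2F_i(\cdot)\|\leq\|\nabla^2F_i(\x_*)\|+L_3\delta$ on the ball $\|\x-\x_*\|\leq\delta$. Passing from $\sum_i$ to a vector norm costs the factor $\sqrt{n}$. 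I would first carry out the simpler grouping above, which already yields a valid quadratic bound. I would then re-group along these lines to match \eqref{eq:rho-cond-1}, or show that the stated $\rho$ dominates the bound obtained.
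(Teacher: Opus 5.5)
Your Cases 2 and 3 are fine. Case 2 is exactly the paper's argument. In Case 3 you split the residual differently from the paper: it uses $(\J(\x)-\J(\x_*))^{\top}\F(\x)$ with $\|\F(\x)\|\leq L\|\x-\x_*\|$, and puts $\J(\x_*)^{\top}$ in front of the Taylor remainder. You still reach the same constant $3LL_2/2$.

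The gap is in Case 1, which you leave open. The constant you actually prove there is $\rho'=\frac{3}{2}LL_2+\frac{L_3}{2}\|\F(\x_*)\|_1$, and this does not give the statement. Your fallback of showing that the stated $\rho$ dominates it cannot work. The inequality $\rho'\leq\rho$ is equivalent to $L_2\leq\sum_{i}\|\nabla^2F_i(\x_*)\|+\sqrt{n}L_3\delta$, which the hypotheses do not imply. For example, for affine $\F$ every $L_2>0$ is admissible, while the right-hand side is $\sqrt{n}L_3\delta$, which is small for small $\delta$.

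Your other suggestion does close the gap, and the bookkeeping is worth making explicit. Keep the split
\[
\J(\x)^{\top}\big(\F(\x)-\F(\x_*)-\J(\x_*)(\x-\x_*)\big)+\big(\J(\x)-\J(\x_*)\big)^{\top}\J(\x_*)(\x-\x_*)
\]
of the Gauss--Newton part. Still bound the second term by $LL_2\|\x-\x_*\|^2$, and change only the first.

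For $\|\x-\x_*\|\leq\delta$, the $i$th entry of the remainder is $\int_0^1(1-s)(\x-\x_*)^{\top}\nabla^2F_i(\x_*+s(\x-\x_*))(\x-\x_*)\,ds$. Its absolute value is at most $\frac{1}{2}(\|\nabla^2F_i(\x_*)\|+L_3\delta)\|\x-\x_*\|^2$. A vector whose entries are bounded by $a_i+b$ has Euclidean norm at most $\sum_i a_i+\sqrt{n}\,b$, so the $\sqrt{n}$ falls only on the uniform $L_3\delta$ part. Multiply by $\|\J(\x)\|\leq L$, then add $LL_2$ and your $\frac{L_3}{2}\|\F(\x_*)\|_1$. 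This gives exactly the stated $\rho$; keeping the weight $1-s$ even improves the $\delta$-term to $\sqrt{n}LL_3\delta/6$.

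This remains a different route from the paper's, which works at the Hessian level. The paper shows $\|\nabla^2\phi(\y)-\nabla^2\phi(\x_*)\|\leq2\rho\|\y-\x_*\|$ on the $\delta$-ball directly from $\nabla^2\phi=\H+\sum_iF_i\nabla^2F_i$. The three pieces come from:
\begin{itemize}
\item the $2LL_2$ from Lipschitz continuity of $\H$;
\item the $L\sum_i\|\nabla^2F_i(\x_*)\|$ from $|F_i(\y)-F_i(\x_*)|\leq L\|\y-\x_*\|$;
\item the $L_3\sqrt{n}L\delta$ from $\sum_i|F_i(\y)|\leq\|\F(\x_*)\|_1+\sqrt{n}L\delta$.
\end{itemize}
It then reuses the Case 2 integral, so Cases 1 and 2 share one argument. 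Your guess that the paper expands $\F$ to second order is therefore not what it does, though it leads to the same number. Your gradient-level split is longer, but as a by-product it gives the global, $\delta$-free constant $\rho'$. That constant beats the stated $\rho$ whenever $L_2<\sum_i\|\nabla^2F_i(\x_*)\|+\sqrt{n}L_3\delta$.
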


We also make the following assumption on $\H(\x_*)$ and $\Q(\x_*)$, which is widely used and necessary to guarantee the local linear convergence of the Gauss--Newton method~\citep{gratton2007approximate,ortega2000iterative,wedin1974gauss}.
\begin{assumption}
\label{ass:deltacondi}
We suppose that
\begin{align}
\label{eq:condiDelta}
\theta\triangleq\Norm{\H(\x_*)^{-1/2}\Q(\x_*)\H(\x_*)^{-1/2}}\in[0,1),
\end{align}
where $\mH(\cdot)=\mJ(\cdot)^\top\mJ(\cdot)$, $\Q(\cdot)\triangleq \sum_{i=1}^n F_i(\cdot)\nabla^2 F_i(\cdot)$, and $\vx_*$ is the stationary point follows Assumption \ref{ass:nonsingular_J^*}.
\end{assumption}
Note that Assumption~\ref{ass:deltacondi} indicates the positive definiteness of $\nabla^2\phi(\x_*)$ for the stationary point, while it does not guarantee the positive definiteness of $\nabla^2\phi(\x)$ always holds even for~$\vx\in\BR^d$ in some local region.
However, the following lemma shows that we can guarantee the positive definiteness of the matrix
\begin{align*}
    \tilde{\nabla}^2\phi(\x)\triangleq \H(\x) + \Q(\x_*),
\end{align*}
which can be regarded as an approximation of the Hessian $\nabla^2\phi(\x) =\H(\x) + \Q(\x)$.
\begin{lemma}
\label{lm:pd_approximate_phi}
Under Assumptions~\ref{ass:Jlip}, \ref{ass:nonsingular_J^*}, and~\ref{ass:deltacondi}, 
for all $c\in(1,1/\theta)$ and $\x,\vy\in\BR^d$ such that~$\|\x-\x_*\|\leq(c-1)\mu_*^2/(4cLL_2)$, it holds
\begin{align}\label{eq:approx_phi_positive}
     (1-c\theta)\mu^2 \I_d \preceq (1-c\theta)\H(\x)\preceq  \tilde{\nabla}^2\phi(\x)\preceq (1+c\theta)\H(\x)\preceq (1+c\theta)L^2\I_d.
\end{align}
and
\begin{align}\label{eq:concor_phi}
    (1-\widehat M\|\y-\x\|)\tilde{\nabla}^2\phi(\x)
        \preceq\tilde{\nabla}^2\phi(\y)
        \preceq(1+\widehat M\|\y-\x\|)\tilde{\nabla}^2\phi(\x),
\end{align}
where $\widehat M\triangleq 2LL_2/((1-c\theta)\mu^2)$ and $\mu=\mu_*/\sqrt{2}$. 
\end{lemma}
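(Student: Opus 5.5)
The plan is to compare $\Q(\x_*)$ with $\H(\x)$ using the spectral bound from Assumption~\ref{ass:deltacondi} at $\x_*$, then transport that bound to nearby $\x$ via Lipschitz continuity of the Gram matrix (Proposition~\ref{prop:gram_lip}). The second chain \eqref{eq:concor_phi} will then follow from a perturbation argument with respect to the positive-definite matrix $\tilde{\nabla}^2\phi(\x)$.

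\textbf{Step 1 (location of $\x$).} Since $c>1$ we have $(c-1)/(4c)<1/4$. Hence $\|\x-\x_*\|\le(c-1)\mu_*^2/(4cLL_2)$ implies $\x\in\Omega_{\mathrm{over}}$. Proposition~\ref{prop:local_nonsingularity} together with Proposition~\ref{prop:gram_lower} then gives $\H(\x)\succeq\mu^2\I_d$ with $\mu=\mu_*/\sqrt2$. Proposition~\ref{prop:gram_lip} also gives $\H(\x)\preceq L^2\I_d$. These two facts will supply the outermost inequalities in \eqref{eq:approx_phi_positive} once the middle ones are established.

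\textbf{Step 2 (relating $\H(\x_*)$ to $\H(\x)$).} Definition \eqref{eq:condiDelta} of $\theta$ is equivalent to $-\theta\H(\x_*)\preceq\Q(\x_*)\preceq\theta\H(\x_*)$. The goal is therefore $\H(\x_*)\preceq c\,\H(\x)$. By Proposition~\ref{prop:gram_lip},
\begin{align*}
\H(\x_*)-\H(\x)\preceq 2LL_2\|\x-\x_*\|\,\I_d\preceq \frac{(c-1)\mu_*^2}{2c}\I_d\preceq\frac{c-1}{2c}\H(\x_*),
\end{align*}
where the last step uses $\H(\x_*)\succeq\mu_*^2\I_d$ from Assumption~\ref{ass:nonsingular_J^*}. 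Rearranging gives $\H(\x)\succeq\frac{c+1}{2c}\H(\x_*)\succeq\frac1c\H(\x_*)$, since $c\ge1$.

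It follows that $-c\theta\H(\x)\preceq\Q(\x_*)\preceq c\theta\H(\x)$. Adding $\H(\x)$ to each side yields the middle inequalities of \eqref{eq:approx_phi_positive}. Because $c<1/\theta$, we have $1-c\theta>0$, so $\tilde\nabla^2\phi(\x)\succ\vzero$.

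\textbf{Step 3 (relative Lipschitz bound \eqref{eq:concor_phi}).} Note that $\tilde\nabla^2\phi(\y)-\tilde\nabla^2\phi(\x)=\H(\y)-\H(\x)$, because the $\Q(\x_*)$ term cancels. Its spectral norm is at most $2LL_2\|\y-\x\|$ by Proposition~\ref{prop:gram_lip}, so
\begin{align*}
\pm\big(\H(\y)-\H(\x)\big)\preceq 2LL_2\|\y-\x\|\,\I_d\preceq\frac{2LL_2\|\y-\x\|}{(1-c\theta)\mu^2}\tilde\nabla^2\phi(\x),
\end{align*}
using the lower bound $\tilde\nabla^2\phi(\x)\succeq(1-c\theta)\mu^2\I_d$ from Step 2. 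This is exactly \eqref{eq:concor_phi} with $\widehat M=2LL_2/((1-c\theta)\mu^2)$. Only $\x$ needs to lie in the ball; $\y$ may be arbitrary.

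\textbf{Main obstacle.} The key difficulty is the comparison $\H(\x_*)\preceq c\H(\x)$ in Step 2. It is where the specific radius $(c-1)\mu_*^2/(4cLL_2)$ enters, and care is needed to use $\mu_*$ (at $\x_*$) rather than $\mu$ (at $\x$) in the absorbing step. The rest of the argument is direct Loewner-order bookkeeping.
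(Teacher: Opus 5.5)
Your proof is correct and follows the paper's argument: you sandwich $\Q(\x_*)$ between $\pm\theta\H(\x_*)$, transport this to $\H(\x)$ with a factor at most $c$, and add $\H(\x)$. For \eqref{eq:concor_phi} you use the cancellation of $\Q(\x_*)$ and absorb the $2LL_2\|\y-\x\|$ Lipschitz error via $\tilde\nabla^2\phi(\x)\succeq(1-c\theta)\mu^2\I_d$, exactly as the paper does. The only cosmetic difference is in the transport step: the paper gets $\H(\x_*)\preceq c\H(\x)$ from Proposition~\ref{prop:Gram_concor} (factor $1+M\|\x-\x_*\|\le 2-1/c\le c$), whereas you absorb the error using $\H(\x_*)\succeq\mu_*^2\I_d$, which gives the slightly sharper factor $2c/(c+1)\le c$.
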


Note that Gauss--Newton iteration \eqref{update:gn} for the case of $n\geq d$ can be written as
\begin{align}\label{eq:update_n>d}
\begin{split}    
    \vx_+= &\x-(\J(\x)^{\top}\J(\x))^{-1}\J(\x)^{\top}\mF(\vx)  \\
    = & \x- \mH^{-1}\nabla \phi(\x),
\end{split}
\end{align}
since we have $\mH(\vx)=\J(\x)^{\top}\J(\x)$ and $\nabla \phi(\x)=\J(\x)^{\top}\F(\x)$.
Recall that classical linear-quadratic convergence rate for iteration \eqref{eq:update_n>d} is established based on the distance $\|\x-\x_*\|$ \citep{gratton2007approximate,dennis1996numerical}.
However, this metric does not work for proposed RQGN methods because our iteration \eqref{eq:qNupdate} only accesses the Gram estimator $\mG$ rather than the exact $\mH(\vx)$.

For analyzing the convergence of our RQGN, we construct the weighted norm 
\begin{align}\label{eq:our_measure}
\begin{split}\    
    \lambda_{\mathrm{over}}(\x)
    \triangleq & \big\|(\H(\x)+\Q(\x_*))^{1/2}(\x-\x_*)\big\| \\
    = & \big\|(\tilde{\nabla}^2\phi(\x))^{1/2}(\x-\x_*)\big\|
\end{split}
\end{align}
to address the challenge of the approximation error between $\G$ and $\H(\x)$.
Note that Lemma~\ref{lm:pd_approximate_phi} shows that the matrix $\tilde{\nabla}^2\phi(\x)$ is positive-definite in the local region, which guarantees the metric $\lambda_{\mathrm{over}}(\cdot)$ is well-defined in our local convergence analysis.
Consequently, we establish the partial relation and explicit linear quadratic rate for iteration~\eqref{eq:update_n>d} as follows.
\begin{lemma}\label{lm:GNlinearquadra}
Under Assumptions~\ref{ass:Jlip}, \ref{ass:nonsingular_J^*}, \ref{ass:x_star_philip}, and \ref{ass:deltacondi}, we additionally suppose the iteration~\eqref{eq:qNupdate} in the case of $n\geq d$ holds 
\begin{align}\label{eq:condi_x_linear_quadra}
    \H(\x)\preceq\G\preceq\eta\H(\x),\quad
    \|\x-\x_*\|\leq\frac{(c-1)\mu_*^2}{4cLL_2},\quad
    \text{and}\quad\lambda_{\mathrm{over}}(\x)\leq
    \frac{(1-c\theta)^{3/2}\mu_*^5}{16D},
\end{align}
for some $\eta\geq1$ and $c\in(1,1/\theta)$, where
$D\triangleq L^3L_2+LL_2^2\|\F(\x_*)\|$.
Then it holds
\begin{align}
& \lambda_{\mathrm{over}}(\x_{+}) \leq \left(1-\frac{1-c\theta}{\eta}\right)\lambda_{\mathrm{over}}(\x) +C_1(\lambda_{\mathrm{over}}(\x))^2, \label{eq:overfit_linear_quadra}\\
&
\|\x_{+}-\x_*\| \leq \frac{1}{\sqrt{1-c\theta}\mu}
\left(\left(1-\frac{1-c\theta}{\eta}\right)
\lambda_{\mathrm{over}}(\x) 
+C_1(\lambda_{\mathrm{over}}(\x))^2\right), \label{eq:overfit_distance_lambda} \\
& \|\x_{+}-\x\| \leq\frac{L^2+L_2\|\F(\x_*)\|}{\sqrt{1-c\theta}\mu^3}
\lambda_{\mathrm{over}}(\x), \label{eq:movedistance_lambda}
\end{align}
where 
\begin{align}\label{eq:mu-C1}
\mu=\frac{\mu_*}{\sqrt{2}} 
\quad\text{and}\quad
C_1
&\triangleq\frac{2D}{(1-c\theta)^{3/2}\mu^5}
+\left(\frac{4(1+c\theta)}{(1-c\theta)^{3/2}}+\frac{2c}{c-1}\right)\frac{LL_2}{\mu^3}
+\frac{2\sqrt{1+c\theta}\,\rho}{(1-c\theta)\mu^3}.
\end{align}
\end{lemma}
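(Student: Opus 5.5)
Write $\tilde{\nabla}^2\phi(\x)=\H+\Q_*$ with $\H=\H(\x)$, $\Q_*=\Q(\x_*)$ and $\vw=\x-\x_*$. From the update \eqref{eq:qNupdate},
\begin{align*}
\x_+-\x_*=\vw-\G^{-1}\J(\x)^{\top}\F(\x)=(\I_d-\G^{-1}(\H_*+\Q_*))\vw-\G^{-1}\vv,
\end{align*}
where $\H_*=\H(\x_*)$ and $\vv\triangleq \J(\x)^{\top}\F(\x)-(\H_*+\Q_*)\vw$. Assumption~\ref{ass:x_star_philip} gives $\|\vv\|\le\rho\|\vw\|^2$. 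I then replace $\H_*$ by $\H$ inside the linear term:
\begin{align*}
\x_+-\x_*=(\I_d-\G^{-1}\tilde{\nabla}^2\phi(\x))\vw+\G^{-1}(\H-\H_*)\vw-\G^{-1}\vv.
\end{align*}
Finally I multiply by $(\tilde\nabla^2\phi(\x))^{1/2}$ and bound each term. The first term carries the linear rate. The other two are quadratic, since $\|\H-\H_*\|\le 2LL_2\|\vw\|$ (Proposition~\ref{prop:gram_lip}) and $\|\vw\|\le\lambda_{\mathrm{over}}(\x)/(\sqrt{1-c\theta}\,\mu)$ by \eqref{eq:approx_phi_positive}.

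\textbf{Linear term.} Set $\A=\tilde\nabla^2\phi(\x)$. Then
\[
\A^{1/2}(\I_d-\G^{-1}\A)\A^{-1/2}=\I_d-\A^{1/2}\G^{-1}\A^{1/2}.
\]
Lemma~\ref{lm:pd_approximate_phi} gives $(1-c\theta)\H\preceq\A\preceq(1+c\theta)\H$, and together with $\H\preceq\G\preceq\eta\H$ this yields
\[
\frac{1-c\theta}{\eta}\I_d\preceq \A^{1/2}\G^{-1}\A^{1/2}\preceq (1+c\theta)\I_d .
\]
The upper end is the difficulty. A symmetric matrix with spectrum in $[(1-c\theta)/\eta,\,1+c\theta]$ has $\|\I_d-\cdot\|\le\max\{1-(1-c\theta)/\eta,\ c\theta\}$, and we need $c\theta\le 1-(1-c\theta)/\eta$. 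This holds when $\eta\ge1$, because $1-(1-c\theta)/\eta\ge c\theta$ is equivalent to $(1-c\theta)(1-1/\eta)\ge0$. So the linear term is at most $(1-(1-c\theta)/\eta)\lambda_{\mathrm{over}}(\x)$.

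\textbf{Quadratic terms.} I use $\|\A^{1/2}\G^{-1}\|\le\|\A^{1/2}\G^{-1}\A^{1/2}\|\,\|\A^{-1/2}\|\le(1+c\theta)/(\sqrt{1-c\theta}\,\mu)$. This gives contributions of order $(1+c\theta)LL_2\|\vw\|^2/(\sqrt{1-c\theta}\,\mu)$ and $\sqrt{1+c\theta}\,\rho\|\vw\|^2/\mu\cdots$. Converting $\|\vw\|^2$ into $\lambda_{\mathrm{over}}(\x)^2/((1-c\theta)\mu^2)$ yields the $\rho$ term and the $4(1+c\theta)/(1-c\theta)^{3/2}$ term of $C_1$, up to constants.

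\textbf{Change of metric and the $D$ and $c/(c-1)$ terms.} What remains is to pass from $\A^{1/2}(\x_+-\x_*)$ to $\lambda_{\mathrm{over}}(\x_+)=\|(\tilde\nabla^2\phi(\x_+))^{1/2}(\x_+-\x_*)\|$. I plan to use \eqref{eq:concor_phi} with $\y=\x_+$, which gives a factor $\sqrt{1+\widehat M\|\x_+-\x\|}\le1+\widehat M\|\x_+-\x\|/2$. This needs a bound on $\|\x_+-\x\|=\|\G^{-1}\J^{\top}\F\|$. For \eqref{eq:movedistance_lambda}, I write $\J(\x)^{\top}\F(\x)=\J(\x)^{\top}\F(\x)-\J(\x_*)^{\top}\F(\x_*)$. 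This difference is Lipschitz in $\x$ with constant $L^2+L_2\|\F(\x_*)\|$, and $\|\G^{-1}\|\le1/\mu^2$, so
\[
\|\x_+-\x\|\le(L^2+L_2\|\F(\x_*)\|)\|\vw\|/\mu^2 .
\]
Plugging in the bound on $\|\vw\|$ gives \eqref{eq:movedistance_lambda}. The extra factor times the leading term then produces the $2D/((1-c\theta)^{3/2}\mu^5)$ term, since $\widehat M(L^2+L_2\|\F_*\|)LL_2$ matches $D$. The $2c/(c-1)$ term presumably comes from the $\x$-dependence of the region constraint in Lemma~\ref{lm:pd_approximate_phi} applied at $\x_+$.

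\textbf{Region check, \eqref{eq:overfit_distance_lambda}, and main obstacle.} Verifying that $\x_+$ stays in the region where Lemma~\ref{lm:pd_approximate_phi} applies uses the smallness assumption on $\lambda_{\mathrm{over}}(\x)$ in \eqref{eq:condi_x_linear_quadra}. Once \eqref{eq:overfit_linear_quadra} holds, \eqref{eq:overfit_distance_lambda} follows from $\tilde\nabla^2\phi(\x_+)\succeq(1-c\theta)\mu^2\I_d$. I expect the main obstacle to be this bookkeeping: ensuring $\x_+$ is admissible for \eqref{eq:concor_phi}, with $\widehat M\|\x_+-\x\|$ small, so that the multiplicative metric change is absorbed into the quadratic term with exactly the constant $C_1$.
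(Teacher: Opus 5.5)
\textbf{Gap: the derivation of \eqref{eq:overfit_distance_lambda} and the planned region check for $\x_+$.}

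Your decomposition of $\x_+-\x_*$ is the paper's: the $(\I_d-\G^{-1}\tilde{\nabla}^2\phi(\x))$ term, the $\G^{-1}(\H(\x)-\H(\x_*))$ term, and the remainder. Your spectral argument for the linear factor and your proof of \eqref{eq:movedistance_lambda} also coincide with the paper's.

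You deduce \eqref{eq:overfit_distance_lambda} from \eqref{eq:overfit_linear_quadra} via $\tilde{\nabla}^2\phi(\x_+)\succeq(1-c\theta)\mu^2\I_d$. By Lemma~\ref{lm:pd_approximate_phi}, this needs $\|\x_+-\x_*\|\le(c-1)\mu_*^2/(4cLL_2)$, and the lemma's hypotheses do not give you that.
\begin{itemize}
\item The threshold on $\lambda_{\mathrm{over}}(\x)$ in \eqref{eq:condi_x_linear_quadra} stays bounded away from $0$ as $c\to1$, while the radius tends to $0$. So for $c$ near $1$ the smallness condition is vacuous.
\item $\I_d-\G^{-1}\tilde{\nabla}^2\phi(\x)$ is a contraction in the $\tilde{\nabla}^2\phi(\x)$-norm but not in the Euclidean norm. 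With an ill-conditioned $\H(\x_*)$ and a non-diagonal admissible $\G$, the point $\x_+$ can leave the ball when $\x$ sits near its boundary.
\end{itemize}
This is why Theorem~\ref{thm:linear1n>d} imposes the extra condition $\lambda_{\mathrm{over}}(\x_0)\le(1-c\theta)/(10\eta_0C_1)$ to keep the iterates in the ball. So the obstacle you single out cannot be resolved as you plan.

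\textbf{How to repair it.} Nothing about $\x_+$ is needed beyond $\widehat M\|\x_+-\x\|<1$. Let $\A=\tilde{\nabla}^2\phi(\x)$ at the current point.
\begin{itemize}
\item First establish $\|\A^{1/2}(\x_+-\x_*)\|\le(1-(1-c\theta)/\eta)\lambda_{\mathrm{over}}(\x)+K\lambda_{\mathrm{over}}(\x)^2$ with $K\le C_1$. Then \eqref{eq:overfit_distance_lambda} follows at once from $\A\succeq(1-c\theta)\mu^2\I_d$, which holds because $\x$ is in the ball.
\item For \eqref{eq:overfit_linear_quadra}, apply \eqref{eq:concor_phi} with base point $\x$ and $\y=\x_+$. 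Its proof uses the ball condition only at the base point.
\item By \eqref{eq:movedistance_lambda} and the $\lambda$-threshold, $\widehat M\|\x_+-\x\|\le 1/\sqrt{2}$. Hence $\tilde{\nabla}^2\phi(\x_+)\succ\0$ and $\lambda_{\mathrm{over}}(\x_+)\le\sqrt{1+\widehat M\|\x_+-\x\|}\,\|\A^{1/2}(\x_+-\x_*)\|$.
\end{itemize}
In particular, the $2c/(c-1)$ term of $C_1$ does not come from any constraint at $\x_+$. It is unused slack: the argument only needs $2D/((1-c\theta)^{3/2}\mu^5)+2K\le C_1$.

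\textbf{A further constant issue.} Your stated bound $\|\A^{1/2}\G^{-1}\|\le(1+c\theta)/(\sqrt{1-c\theta}\,\mu)$ gives a $\rho$-coefficient of order $(1+c\theta)/(1-c\theta)^{3/2}$. This exceeds what $C_1$ allows when $c\theta$ is close to $1$, and the other terms cannot absorb it because $\rho$ is not controlled by $L$ and $L_2$. To get the $\sqrt{1+c\theta}\,\rho\|\x-\x_*\|^2/\mu$ you wrote, use
\[
\|\A^{1/2}\G^{-1}\|^2=\|\G^{-1}\A\G^{-1}\|\le(1+c\theta)\|\G^{-1}\|\le(1+c\theta)/\mu^2 .
\]
This follows from $\A\preceq(1+c\theta)\H(\x)\preceq(1+c\theta)\G$ and $\G\succeq\H(\x)\succeq\mu^2\I_d$.
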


We now combine Lemma~\ref{lm:GNlinearquadra}, Proposition~\ref{prop:Gram_concor}, and Lemma~\ref{lm:QN_order} to establish a linear convergence rate and additionally show that all iterates remain in the local region 
$\Omega_{\mathrm{over}}$.
\begin{theorem}
\label{thm:linear1n>d}
Under Assumptions~\ref{ass:Jlip}, \ref{ass:nonsingular_J^*}, \ref{ass:x_star_philip}, and \ref{ass:deltacondi}, 
we run Algorithm~\ref{alg:RQGN} with the initial point $\x_0$ such that
\begin{align}
\label{eq:linear_initial_condi}
\|\x_0-\x_*\|\leq\frac{(c-1)\mu_*^2}{4cLL_2}
\quad\text{and}\quad
\lambda_{\mathrm{over}}(\x_0)\leq \frac{1-c\theta}{10\eta_0 C_1},
\end{align}
for some $c\in(1,1/\theta)$, where $C_1$ follows the definition in Lemma \ref{lm:GNlinearquadra}.
Then it holds $\x_t\in\Omega_{\mathrm{over}}$ for all $t$. Moreover, we also have
\begin{align}
\label{eq:induc_lambda}
\H_t\preceq\G_t\preceq\eta_0{\rm e}^{2M\sum_{i=0}^{t-1}r_i}\H_t
\preceq\frac{3\eta_0}{2}\H_t
\quad\text{and}\quad
\lambda_{\mathrm{over}}(\x_t)\leq\left(1-\frac{1-c\theta}{2\eta_0}\right)^t\lambda_{\mathrm{over}}(\x_0).
\end{align}
\end{theorem}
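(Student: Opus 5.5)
We argue by induction on $t$. The inductive hypothesis at step $t$ is:
\begin{align*}
\|\x_s-\x_*\|\leq \frac{(c-1)\mu_*^2}{4cLL_2},\qquad \H_s\preceq\G_s\preceq \eta_0{\rm e}^{2M\sum_{i=0}^{s-1}r_i}\H_s,\qquad \lambda_{\mathrm{over}}(\x_s)\leq \Big(1-\frac{1-c\theta}{2\eta_0}\Big)^s\lambda_{\mathrm{over}}(\x_0)
\end{align*}
for all $s\leq t$, together with $\eta_0{\rm e}^{2M\sum_{i=0}^{t-1}r_i}\leq 3\eta_0/2$. Since $(c-1)/c<1$, the distance bound places $\x_s\in\Omega_{\mathrm{over}}$. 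The base case $t=0$ is exactly the initialization $\H_0\preceq\G_0\preceq\eta_0\H_0$ combined with \eqref{eq:linear_initial_condi}.

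For the inductive step, we first apply Lemma~\ref{lm:GNlinearquadra} at $\x=\x_t$ with $\eta=3\eta_0/2$. Its smallness requirement $\lambda_{\mathrm{over}}(\x_t)\leq (1-c\theta)^{3/2}\mu_*^5/(16D)$ is implied by $\lambda_{\mathrm{over}}(\x_t)\leq \lambda_{\mathrm{over}}(\x_0)\leq (1-c\theta)/(10\eta_0C_1)$. To see this, compare with the first term of $C_1$: it gives $C_1\geq 2D/((1-c\theta)^{3/2}\mu^5)$, and since $\mu=\mu_*/\sqrt2$, the resulting bound is stronger than needed.

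Using $C_1\lambda_{\mathrm{over}}(\x_t)\leq (1-c\theta)/(10\eta_0)$, \eqref{eq:overfit_linear_quadra} yields
\begin{align*}
\lambda_{\mathrm{over}}(\x_{t+1})\leq \Big(1-\frac{2(1-c\theta)}{3\eta_0}+\frac{1-c\theta}{10\eta_0}\Big)\lambda_{\mathrm{over}}(\x_t)\leq\Big(1-\frac{1-c\theta}{2\eta_0}\Big)\lambda_{\mathrm{over}}(\x_t).
\end{align*}
Similarly, \eqref{eq:overfit_distance_lambda} gives $\|\x_{t+1}-\x_*\|\leq \lambda_{\mathrm{over}}(\x_0)/(\sqrt{1-c\theta}\,\mu)$. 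This is below $(c-1)\mu_*^2/(4cLL_2)$ because the term $\frac{2c}{c-1}\frac{LL_2}{\mu^3}$ in $C_1$ was built in for exactly this purpose: it gives $\lambda_{\mathrm{over}}(\x_0)\leq \frac{(c-1)\mu^3}{20cLL_2\eta_0}$, and then the factor $\mu^2/\sqrt{1-c\theta}$ must be checked against $\mu_*^2/4=\mu^2/2$. There is some slack through $\sqrt{1-c\theta}$, which may require using the $(1-c\theta)^{3/2}$ terms as well. This is routine constant chasing.

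For the matrix part, we use \eqref{eq:movedistance_lambda} to bound
\begin{align*}
r_t\leq \frac{L^2+L_2\|\F(\x_*)\|}{\sqrt{1-c\theta}\,\mu^3}\,\lambda_{\mathrm{over}}(\x_t).
\end{align*}
Since both $\x_t,\x_{t+1}\in\Omega_{\mathrm{over}}$, Proposition~\ref{prop:Gram_concor} gives
\begin{align*}
\H_{t+1}\preceq(1+Mr_t)\H_t\preceq (1+Mr_t)\G_t=\widetilde\G_t\preceq (1+Mr_t)^2\eta_0{\rm e}^{2M\sum_{i<t}r_i}\H_{t+1}.
\end{align*}
Using $1+Mr_t\leq {\rm e}^{Mr_t}$, Lemma~\ref{lm:QN_order} applied with target $\H_{t+1}$ transfers this sandwich to $\G_{t+1}$, which yields $\H_{t+1}\preceq\G_{t+1}\preceq\eta_0{\rm e}^{2M\sum_{i\leq t}r_i}\H_{t+1}$.

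The main obstacle is closing the bound $\eta_0{\rm e}^{2M\sum_{i\leq t}r_i}\leq 3\eta_0/2$, which needs $2M\sum_i r_i\leq\ln(3/2)$. Summing the geometric decay of $\lambda_{\mathrm{over}}$ gives
\begin{align*}
\sum_i r_i\leq \frac{L^2+L_2\|\F(\x_*)\|}{\sqrt{1-c\theta}\,\mu^3}\cdot\frac{2\eta_0}{1-c\theta}\,\lambda_{\mathrm{over}}(\x_0).
\end{align*}
We then plug in $\lambda_{\mathrm{over}}(\x_0)\leq (1-c\theta)/(10\eta_0C_1)$, which cancels $\eta_0$ and one factor $(1-c\theta)$, and use $M=2LL_2/\mu^2$. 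The remaining requirement is
\begin{align*}
\frac{4LL_2(L^2+L_2\|\F(\x_*)\|)}{5\sqrt{1-c\theta}\,\mu^5 C_1}\leq \ln(3/2),
\end{align*}
which holds because $C_1\geq 2D/((1-c\theta)^{3/2}\mu^5)$ with $D=LL_2(L^2+L_2\|\F(\x_*)\|)$. The left side is then at most $\frac{2}{5}(1-c\theta)<\ln(3/2)\approx0.405$. This matching of $D$ with the numerator of \eqref{eq:movedistance_lambda} is the crux, and it completes the induction.
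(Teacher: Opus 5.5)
Your proposal is correct and follows essentially the same route as the paper's proof. It uses the same induction on the distance bound, the sandwich $\H_t\preceq\G_t\preceq\eta_0{\rm e}^{2M\sum_{i<t}r_i}\H_t$ and the geometric decay; it applies Lemma~\ref{lm:GNlinearquadra} with $\eta\leq 3\eta_0/2$ and uses Proposition~\ref{prop:Gram_concor} with Lemma~\ref{lm:QN_order} to transfer the sandwich; and it obtains the same estimate $2M\sum_i r_i\leq 2(1-c\theta)/5$ from $C_1\geq 2D/((1-c\theta)^{3/2}\mu^5)$.

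The one unclear step is the distance bound, because you dropped the factor $(1-c\theta)$ from the initial condition. If you keep it, you get $\|\x_{t+1}-\x_*\|\leq \sqrt{1-c\theta}\,(c-1)\mu^2/(20c\eta_0LL_2)\leq (c-1)\mu_*^2/(4cLL_2)$ directly, so no further constant chasing is needed.
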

Theorem~\ref{thm:linear1n>d} guarantees that $\x_t\in\Omega_{\mathrm{over}}$ for all $t$, verifying the trajectory condition required by Lemma~\ref{lm:Gram_approximate} with $\Omega=\Omega_{\mathrm{over}}$.
Hence, the sequence $\{\eta_t\}$ defined in~\eqref{eq:def_eta} is well-defined along the iterates like the underdetermined case.
Combining Lemma~\ref{lm:Gram_approximate} and Theorem~\ref{thm:linear1n>d}, we establish finite-time bounds on the Gram approximation error and a two-stage local linear convergence rate.
\begin{theorem}
\label{thm:two-stage-n>d}
Under Assumptions~\ref{ass:Jlip}, \ref{ass:nonsingular_J^*}, \ref{ass:x_star_philip}, and \ref{ass:deltacondi}, run Algorithm~\ref{alg:RQGN} with the initial point~$\x_0$ satisfying
\begin{align}
\label{eq:inicondi_final_n>d}
\|\x_0-\x_*\|\leq\frac{(c-1)\mu_*^2}{4cLL_2}
\qquad\text{and}\qquad
\lambda_{\mathrm{over}}(\x_0)\leq
\frac{(1-\gamma)(1-c\theta)}{10(2-\gamma)\eta_0C_2},
\end{align}
for some $c\in(1,1/\theta)$, where
$C_2\triangleq C_1+4\tilde{c}dD/(\sqrt{1-c\theta}\mu^5)$,
the values of $\gamma$ and $\tilde{c}$ for each of Options I--III follow Lemma~\ref{lm:Gram_approximate}, 
and the values of $D$, $\mu$, and $C_1$ follow Lemma~\ref{lm:GNlinearquadra}.
Then we have
\begin{align}
\label{eq:eta-ratio-over}
\EBP{\eta_t-1}
&\leq q_{\text{\rm over}}(t+1)(1-\nu)^t
\quad\text{and}\quad
\EBP{\frac{\lambda_{\mathrm{over}}(\x_{t+1})}{\lambda_{\mathrm{over}}(\x_t)}}
\leq c\theta+q_{\text{\rm over}}(t+1)(1-\nu)^t,
\end{align}
where $\nu\triangleq\min\{\gamma,(1-c\theta)/(2\eta_0)\}$,
$q_{\text{\rm over}}\triangleq {\rm e}^{1/5}
(\delta_0+(1-\gamma)(1-c\theta)/(10(2-\gamma)\eta_0))$, and the
value of~$\delta_0$ for each of Options I--III follows
Lemma~\ref{lm:Gram_approximate}.
Moreover, for every $p\in(0,1)$, there exists
$t_0=\OM\big(\log(1+q_{\text{\rm over}}/(p(1-c\theta)\nu^2))/\nu\big)$
such that, with probability at least $1-p$, we have
\begin{align}
\label{eq:n>dtwostage}
\lambda_{\mathrm{over}}(\x_{t_0+t})
\leq\left(\frac{1+c\theta}{2}\right)^t
\left(1-\frac{1-c\theta}{2\eta_0}\right)^{t_0}\lambda_{\mathrm{over}}(\x_0),
\end{align}
simultaneously for all $t\geq0$.
\end{theorem}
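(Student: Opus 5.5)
The plan is to combine the deterministic linear rate of Theorem~\ref{thm:linear1n>d} with the stochastic contraction of Lemma~\ref{lm:Gram_approximate}, in the same way as the underdetermined superlinear result, but keeping the extra linear term $c\theta$ coming from Lemma~\ref{lm:GNlinearquadra}.

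First, the initial condition \eqref{eq:inicondi_final_n>d} is stronger than \eqref{eq:linear_initial_condi}, because $C_2\geq C_1$ and $(1-\gamma)/(2-\gamma)\leq 1$. So Theorem~\ref{thm:linear1n>d} applies: all iterates stay in $\Omega_{\mathrm{over}}$, $\G_t\preceq\frac{3\eta_0}{2}\H_t$, and $\lambda_{\mathrm{over}}(\x_t)\leq(1-\frac{1-c\theta}{2\eta_0})^t\lambda_{\mathrm{over}}(\x_0)$. By \eqref{eq:movedistance_lambda}, $r_t\leq\frac{L^2+L_2\|\F(\x_*)\|}{\sqrt{1-c\theta}\mu^3}\lambda_{\mathrm{over}}(\x_t)$. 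Hence $Mr_t\leq \frac{2D}{\sqrt{1-c\theta}\mu^5}\lambda_{\mathrm{over}}(\x_t)$, since $M=2LL_2/\mu^2$. This quantity decays geometrically with rate $1-\frac{1-c\theta}{2\eta_0}$. The term $4\tilde c d D/(\sqrt{1-c\theta}\mu^5)$ inside $C_2$ is designed to bound $2\tilde c m M r_t$, with $m=d$. Summing the geometric series also gives $\prod_t(1+Mr_t)^2\leq \exp(2\sum_t Mr_t)\leq e^{1/5}$ under the initial condition.

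Next, I will unroll the recursion in Lemma~\ref{lm:Gram_approximate} after taking total expectations. Because $r_t$ is bounded deterministically by $a(1-\frac{1-c\theta}{2\eta_0})^t$, I get
\[
\EBP{\delta_{t}}\leq e^{1/5}\Big((1-\gamma)^t\delta_0+\sum_{i<t}(1-\gamma)^{t-i}\,2\tilde c dMr_i\Big).
\]
Each summand is at most $(1-\nu)^{t}$ times a constant, so the sum is $\leq t\,(1-\nu)^{t}\cdot\frac{(1-\gamma)(1-c\theta)}{10(2-\gamma)\eta_0}$ up to a factor $(1-\nu)^{-1}$. I will absorb that factor using the $(1-\gamma)/(2-\gamma)$ in the initial condition. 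This yields $\EBP{\eta_t-1}\leq\EBP{\delta_t}\leq q_{\rm over}(t+1)(1-\nu)^t$. The comparison $\eta_t-1\leq\delta_t$ holds for each option: $\sigma$ dominates the largest eigenvalue of $\H^{-1/2}\G\H^{-1/2}-\I$, and for SR1 one uses $\tau_{\H}(\G)\geq \mu^2\lambda_{\max}(\H^{-1/2}(\G-\H)\H^{-1/2})$ together with $\tr{\H_t}\leq mL^2$.

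For the ratio bound, I apply \eqref{eq:overfit_linear_quadra} with $\eta=\eta_t$. Since $1-\frac{1-c\theta}{\eta_t}\leq c\theta+(\eta_t-1)$, I obtain
\[
\frac{\lambda_{\mathrm{over}}(\x_{t+1})}{\lambda_{\mathrm{over}}(\x_t)}\leq c\theta+(\eta_t-1)+C_1\lambda_{\mathrm{over}}(\x_t).
\]
The last term is deterministic and geometrically decaying at rate $(1-\nu)^t$, and is small by the initial condition. So it can be folded into the $\delta_0$-free part of $q_{\rm over}(t+1)(1-\nu)^t$, together with the expectation bound above. This bookkeeping of constants, making everything fit into exactly $q_{\rm over}$, is the fiddly part.

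For the high-probability statement, I use Markov's inequality and a union bound. Set $X_t\triangleq(\eta_t-1)+C_1\lambda_{\mathrm{over}}(\x_t)$. Then
\[
\Pr\Big[\exists t\geq t_0: X_t>\tfrac{1-c\theta}{2}\Big]\leq\frac{2}{1-c\theta}\sum_{t\geq t_0}q_{\rm over}(t+1)(1-\nu)^t\lesssim\frac{q_{\rm over}(t_0+1)(1-\nu)^{t_0}}{(1-c\theta)\nu^2}.
\]
Choosing $t_0=\OM(\log(1+q_{\rm over}/(p(1-c\theta)\nu^2))/\nu)$ makes the right-hand side at most $p$; the main obstacle here is handling the polynomial factor $t_0+1$, which I will absorb by a $\log$ adjustment. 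On the complementary event, every step after $t_0$ contracts by $c\theta+\frac{1-c\theta}{2}=\frac{1+c\theta}{2}$. Combining this with the first $t_0$ deterministic steps from Theorem~\ref{thm:linear1n>d} gives \eqref{eq:n>dtwostage} simultaneously for all $t\geq0$.
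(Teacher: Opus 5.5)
Your proposal is correct, but its middle step takes a simpler route than the paper.

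\textbf{The main recursion.} The paper couples the two error sources into the potential $\xi_t=\delta_t+C_2\lambda_{\mathrm{over}}(\x_t)$. Combining Lemma~\ref{lm:Gram_approximate} with \eqref{eq:overfit_linear_quadra}, it derives the perturbed contraction $\mathbb{E}_t[\xi_{t+1}]\le(1-\nu)\exp(\cdot)\,\xi_t+c\theta C_2\lambda_{\mathrm{over}}(\x_t)$, unrolls it, and reads off both claims from $\lambda_{\mathrm{over}}(\x_{t+1})/\lambda_{\mathrm{over}}(\x_t)\le c\theta+\xi_t$.

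You instead use that Theorem~\ref{thm:linear1n>d} bounds $\lambda_{\mathrm{over}}(\x_t)$ surely by a deterministic geometric sequence, and hence bounds $Mr_t$ via \eqref{eq:movedistance_lambda}. The $\delta$-recursion then becomes a linear inhomogeneous recursion in total expectation, with deterministic forcing $2\tilde c dMr_t\le(C_2-C_1)\lambda_{\mathrm{over}}(\x_0)(1-\frac{1-c\theta}{2\eta_0})^t$. The term $C_1\lambda_{\mathrm{over}}(\x_t)$ is simply added to the ratio bound.

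The constants do close. First, $\sum_{i<t}(1-\gamma)^{t-i}(1-\frac{1-c\theta}{2\eta_0})^i\le t(1-\nu)^t$, with no extra factor $(1-\nu)^{-1}$ needed. Second, the product of the factors $(1+Mr_i)^2$ is at most $e^{1/10}$. Third, with $K=(1-\gamma)(1-c\theta)/(10(2-\gamma)\eta_0)$, you get $e^{1/5}(\delta_0+tK)+K\le q_{\mathrm{over}}(t+1)$.

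\textbf{What each route buys.} For this theorem your route loses nothing, since $\nu$ already contains the deterministic rate $(1-c\theta)/(2\eta_0)$. The paper's coupled potential is reusable. When $\theta=0$ (Corollary~\ref{col:theta=0n>d}, and likewise Theorem~\ref{thm:n<dsuperlinear}), the multiplicative relation $\lambda_{t+1}\le\xi_t\lambda_t$ gives the pure $(1-\gamma)^t$ rate. Your decoupled bound would only give the rate $\max\{1-\gamma,1-1/(2\eta_0)\}$.

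\textbf{High-probability part.} The paper applies Markov's inequality with the decaying threshold $\frac{4q_{\mathrm{over}}}{p\nu^2}(1-\nu/2)^j$ for all $j\ge0$. You use the fixed threshold $(1-c\theta)/2$ for $t\ge t_0$. Your polynomial factor is removed by the same splitting $1-\nu\le(1-\nu/2)^2$:
$(t+1)(1-\nu)^t\le(1-\nu/2)^t\sum_{j\le t}(1-\nu/2)^j\le\frac{2}{\nu}(1-\nu/2)^t$.
So the tail sum is at most $\frac{4}{\nu^2}(1-\nu/2)^{t_0}$, and the paper's choice of $t_0$ works verbatim, with no $\log(1/\nu)$ loss.

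\textbf{One small point.} Applying Lemma~\ref{lm:GNlinearquadra} at $\x_t$ needs $\|\x_t-\x_*\|\le(c-1)\mu_*^2/(4cLL_2)$. This is what the induction inside the proof of Theorem~\ref{thm:linear1n>d} provides; $\x_t\in\Omega_{\mathrm{over}}$ alone is not enough.
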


\begin{remark}
Note that even for the standard Gauss--Newton iteration \eqref{eq:update_n>d},
exact the Gram matrix~$\H(\cdot)=\J(\cdot)^{\top}\J(\cdot)$ does not include the term $\mQ(\cdot)$ in the Hessian  $\nabla^2\phi(\x)=\H(\x)+\Q(\x)$.
Naturally, our RQGN methods also does not consider the term $\mQ(\cdot)$ in the construction of the Gram estimator.
This results the expected ratio~\eqref{eq:eta-ratio-over} 
for shown in Theorem~\ref{thm:two-stage-n>d} contains an additional term $c\theta$, so that the overdetermined case cannot always enjoy the superlinear convergence rate like that of Theorem~\ref{thm:n<dsuperlinear} for the underdetermined case.
\end{remark}

If we strengthen the conditions of Theorem \ref{thm:linear1n>d} by assuming that Assumption \ref{ass:deltacondi} satisfies with $\theta=0$, we can show that our RQGN methods enjoys the superlinear local convergence.

\begin{corollary}
\label{col:theta=0n>d}
Under the same notation and setting as Theorem~\ref{thm:two-stage-n>d}, suppose Assumption~\ref{ass:deltacondi} holds with $\theta=0$ and $c>1$.
Then, for all $p\in(0,1)$, there exists $t_0=\OM(\log(1+q_{\text{\rm over}}/(p\gamma^2))/\gamma)$ such that it holds
\begin{align*}
\lambda_{\mathrm{over}}(\x_{t_0+t})\leq \left(1-\frac{\gamma}{\gamma+1}\right)^{\frac{t(t-1)}{2}}\left(\frac{1}{2}\right)^t\left(1-\frac{1}{2\eta_0}\right)^{t_0}\lambda_{\mathrm{over}}(\x_0)
\end{align*}
with probability at least $1-p$ for all $t\geq1$.
\end{corollary}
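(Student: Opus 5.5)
The plan is to specialize Theorem~\ref{thm:two-stage-n>d} to $\theta=0$ and then reproduce, almost verbatim, the high-probability argument behind the superlinear part of Theorem~\ref{thm:n<dsuperlinear}.

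\textbf{Step 1: specialize to $\theta=0$.} With $\theta=0$ we have $c\theta=0$ for every $c>1$, so the linear-quadratic estimate \eqref{eq:overfit_linear_quadra} of Lemma~\ref{lm:GNlinearquadra} becomes
\begin{align*}
\lambda_{\mathrm{over}}(\x_{t+1})\leq \left(1-\frac{1}{\eta_t}\right)\lambda_{\mathrm{over}}(\x_t)+C_1\lambda_{\mathrm{over}}(\x_t)^2 .
\end{align*}
Theorem~\ref{thm:linear1n>d} then gives $\lambda_{\mathrm{over}}(\x_t)\leq(1-1/(2\eta_0))^t\lambda_{\mathrm{over}}(\x_0)$ for all $t$. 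The term $1-1/\eta_t\leq\eta_t-1$ is the quantity that will drive superlinearity.

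\textbf{Step 2: a better in-expectation bound.} For $\theta=0$ we have $\nu=\min\{\gamma,1/(2\eta_0)\}$, and $\nu$ may be smaller than $\gamma$. I therefore redo the recursion of Lemma~\ref{lm:Gram_approximate}:
\begin{align*}
\EB_t[\delta_{t+1}]\leq(1-\gamma)(1+Mr_t)^2(\delta_t+2\tilde c mMr_t).
\end{align*}
Here $r_t$ is bounded by a constant times $\lambda_{\mathrm{over}}(\x_t)$, using \eqref{eq:movedistance_lambda}. Unrolling gives a sum of terms $(1-\gamma)^{t-i}\lambda_{\mathrm{over}}(\x_i)$. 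Since $\lambda_{\mathrm{over}}(\x_i)$ decays geometrically and the initial condition makes its total mass small, I bound that sum by $(1-\gamma)^t$ times a constant, as in the underdetermined theorem. This yields $\EBP{\eta_t-1}\leq(1-\gamma)^tq$ with $q$ of the order of $q_{\mathrm{over}}$. I also bound the product $\prod(1+Mr_i)^2$ by $\exp(2M\sum_i r_i)$, as in \eqref{eq:induc_lambda}.

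\textbf{Step 3: Markov and a union bound.} By Markov's inequality,
\begin{align*}
\Pr\left[\eta_t-1>(1-\gamma)^{t}q/\big(p\,\omega_t\big)\right]\leq p\,\omega_t,
\end{align*}
where I choose the weights $\omega_t\propto (1+\gamma)^{-t}$ roughly, so that $\sum_{t\geq t_0}\omega_t\leq1$. This gives, with probability at least $1-p$ simultaneously for all $t\geq t_0$,
\begin{align*}
\eta_t-1\leq \frac{q}{p\gamma}\left(\frac{1-\gamma}{1-\gamma/(1+\gamma)}\right)^t\approx\left(1-\frac{\gamma}{1+\gamma}\right)^t\frac{q}{p\gamma}.
\end{align*}
I take $t_0=\OM(\log(1+q/(p\gamma^2))/\gamma)$ so that for $t\geq t_0$ both $\eta_t-1\leq\frac14(1-\gamma/(1+\gamma))^{t-t_0}$ and $C_1\lambda_{\mathrm{over}}(\x_t)\leq\frac14(1-\gamma/(1+\gamma))^{t-t_0}$ hold. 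The second bound follows from the deterministic geometric decay, possibly after enlarging $t_0$.

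\textbf{Step 4: telescope.} With the bounds from Step 3, the per-step contraction factor at step $t_0+s$ is at most $\frac12(1-\gamma/(1+\gamma))^s$. Multiplying these factors for $s=0,\ldots,t-1$ produces $(1/2)^t(1-\gamma/(1+\gamma))^{t(t-1)/2}$. Combining with $\lambda_{\mathrm{over}}(\x_{t_0})\leq(1-1/(2\eta_0))^{t_0}\lambda_{\mathrm{over}}(\x_0)$ gives the stated bound.

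\textbf{Main obstacle.} The hard step is Step 2. Obtaining a clean $(1-\gamma)^t$ rate rather than $(t+1)(1-\nu)^t$ requires controlling the accumulated drift $\sum_i(1-\gamma)^{t-i}r_i$ when the $\lambda$-decay rate $1/(2\eta_0)$ may be slower than $\gamma$. My first attempt would be to exploit the smallness of $\lambda_{\mathrm{over}}(\x_0)$ from \eqref{eq:inicondi_final_n>d}, which scales with $(1-\gamma)$, to absorb this drift into the constant. If that fails, I would accept the $(t+1)(1-\nu)^t$ bound and absorb the polynomial factor into $t_0$.
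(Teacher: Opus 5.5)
There is a genuine gap in Step~2, and it is the obstacle you flag yourself. Write $\lambda_t\triangleq\lambda_{\mathrm{over}}(\x_t)$. Suppose you unroll the recursion of Lemma~\ref{lm:Gram_approximate} for $\delta_t$ alone. Suppose also that you control the drift $2\tilde c dMr_t\lesssim\lambda_t$ only through the deterministic bound $\lambda_i\le(1-\frac{1}{2\eta_0})^i\lambda_0$. Then the forcing sum satisfies $\sum_{i<t}(1-\gamma)^{t-i}(1-\frac{1}{2\eta_0})^i\ge(1-\gamma)(1-\frac{1}{2\eta_0})^{t-1}$. This is not $\OM((1-\gamma)^t)$ whenever $\frac{1}{2\eta_0}<\gamma$, for example under Options II--III with $\eta_0>d/2$. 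The smallness of $\lambda_0$ only shrinks the constant; it cannot change the rate. Your fallback therefore gives $t_0$ and the superlinear factor in terms of $\nu=\min\{\gamma,1/(2\eta_0)\}$, which is strictly weaker than the corollary. The same defect reappears in Step~3. The bound $C_1\lambda_t\le\frac14(1-\frac{\gamma}{1+\gamma})^{t-t_0}$ cannot follow from the deterministic decay when $1-\frac{1}{2\eta_0}>1-\frac{\gamma}{1+\gamma}$, however large you take $t_0$.

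The missing idea is to couple $\delta_t$ and $\lambda_t$ in a single potential. With $\theta=0$, Lemma~\ref{lm:GNlinearquadra} applied with $\eta=1+\delta_t$ gives $\lambda_{t+1}\le(\delta_t+C_1\lambda_t)\lambda_t$. So $\lambda$ contracts by a factor that is itself driven by $\delta_t$, not merely at the slow rate $1-\frac{1}{2\eta_0}$. Set $\xi_t\triangleq\delta_t+C_2\lambda_t$ with $C_2=C_1+4\tilde cdD/\mu^5$. Then $Mr_t\le C_2\lambda_t/2$, $\delta_t+2\tilde cdMr_t\le\xi_t$, and $C_2\lambda_{t+1}\le C_2\lambda_t\,\xi_t$. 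The drift is thus absorbed multiplicatively:
\[
\mathbb{E}_t[\xi_{t+1}]\le\Bigl((1-\gamma)\bigl(1+\tfrac{C_2\lambda_t}{2}\bigr)^2+C_2\lambda_t\Bigr)\xi_t\le(1-\gamma)\exp\Bigl(\tfrac{(2-\gamma)C_2}{1-\gamma}\lambda_t\Bigr)\xi_t .
\]
The slow deterministic decay of $\lambda_t$ now enters only through the exponent. Its sum over all $t$ is at most $\frac{2(2-\gamma)C_2\eta_0}{1-\gamma}\lambda_0\le\frac15$ by \eqref{eq:inicondi_final_n>d}, and this is where the smallness of $\lambda_0$ is actually used. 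Hence $\mathbb{E}[\xi_t]\le(1-\gamma)^tq_{\mathrm{over}}$. Apply Lemma~\ref{lm:lin_lemma26} to $X_t=\xi_t$ with $\tau=1/\gamma$. With probability at least $1-p$, this gives $\xi_t\le\frac{q_{\mathrm{over}}}{p\gamma^2}(1-\frac{\gamma}{1+\gamma})^t$ for all $t$ simultaneously. Since $\lambda_{t+1}/\lambda_t\le\xi_t$, you need no separate control of $C_1\lambda_t$, and your Step~4 telescoping goes through unchanged.

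A smaller slip is in Step~3. The weights $\omega_t\propto(1+\gamma)^{-t}$ give the rate $(1-\gamma)^t(1+\gamma)^t=(1-\gamma^2)^t$, not $(1-\frac{\gamma}{1+\gamma})^t$. The correct weights are $\omega_t=\gamma^2(1-\gamma^2)^t$, which is exactly what Lemma~\ref{lm:lin_lemma26} encodes.
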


\begin{remark}
    If there exists $\vx^*\in\BR^d$ such that $\F(\x^*)=\0$ (case 3 in Proposition~\ref{prop:assumption-sati}), then Assumption~\ref{ass:deltacondi} holds with $\theta = 0$.            
    This implies our local superlinear convergence result in Corollary~\ref{col:theta=0n>d} can apply to several classical problem settings such as  square nonlinear equations~\citep{lin2021explicit,liu2023block} and strongly-convex-strongly-concave minimax optimization~\citep{liu2022quasi}.
\end{remark}

\section{Numerical Experiments}
\label{sec:exp}
In this section, we conduct experiments on scientific computing and machine learning applications to verify the advantage of proposed methods for solving both underdetermined and overdetermined nonlinear equations.
We run our RQGN (Algorithm \ref{alg:RQGN}) with SR1 and fast BFGS updates, denoted by RQGN-SR1 and RQGN-FBFGS, respectively.
We set~$\G_0=L^2\I_m$ and tune parameters $L,M$ over $\{1,10,100,1000\}$ for the algorithm.
All experiments are performed on a PC with Apple M1 and all algorithms are implemented in Python 3.8.12.

\subsection{Empirical Results for the Underdetermined Case}
We first consider the problem of finding eigenvalues of the symmetric matrix $\A\in\RB^{n\times n}$ \citep{vater2024convergence}, which can be formulated by solving the underdetermined nonlinear equation  \eqref{eq:nonlinear_obj} with $\mF:\BR^{n+1}\to\BR^n$ such that
\begin{align}
\label{eq:eigenfinding}
\F(\x)\triangleq (\A-\lambda\I_{n})\u
\qquad\text{and}\qquad
\x= \begin{bmatrix}
    \lambda \\ \u
\end{bmatrix}\in\RB^{n+1}    
\end{align}
We follow the setting of \citet{vater2024convergence} by letting $\A=[a_{i,j}]$ be a tridiagonal matrix with $a_{i,i}=10$ for $i\in\{1,\ldots,n\}$ and $a_{i,i-1}=a_{i-1,i}=-1$ for $i\in\{2,\ldots,n\}$. 

We compare our RQGN methods against baselines gradient descent (GD)~\citep{nesterov2018lectures} and generalized Broyden method~\citep{vater2024convergence}.
For GD, we perform the iteration
$\x_{t+1}=\x_t - \gamma \J(\x_t)^{\top}\F(\x_t)$ in the view of the nonlinear least-square formulation and tune the stepsize $\gamma$ from~$\{0.01,0.1,1,10,100\}$.
For generalized Broyden method, we use the scaled-identity matrix as the initialization like our RQGN.
We report empirical results on iterations and CPU time for $n\in\{300,400,500\}$ in Figure~\ref{fig:eigen_find}.
We observe that our proposed RQGN methods outperform the baseline algorithm in most cases.

\begin{figure}[ht]
\centering
\begingroup
\setlength{\tabcolsep}{1pt}
\renewcommand{\arraystretch}{0.96}
\begin{tabular}{@{}ccc@{}}
\includegraphics[width=0.327\textwidth,trim=11pt 6pt 11pt 6pt,clip]{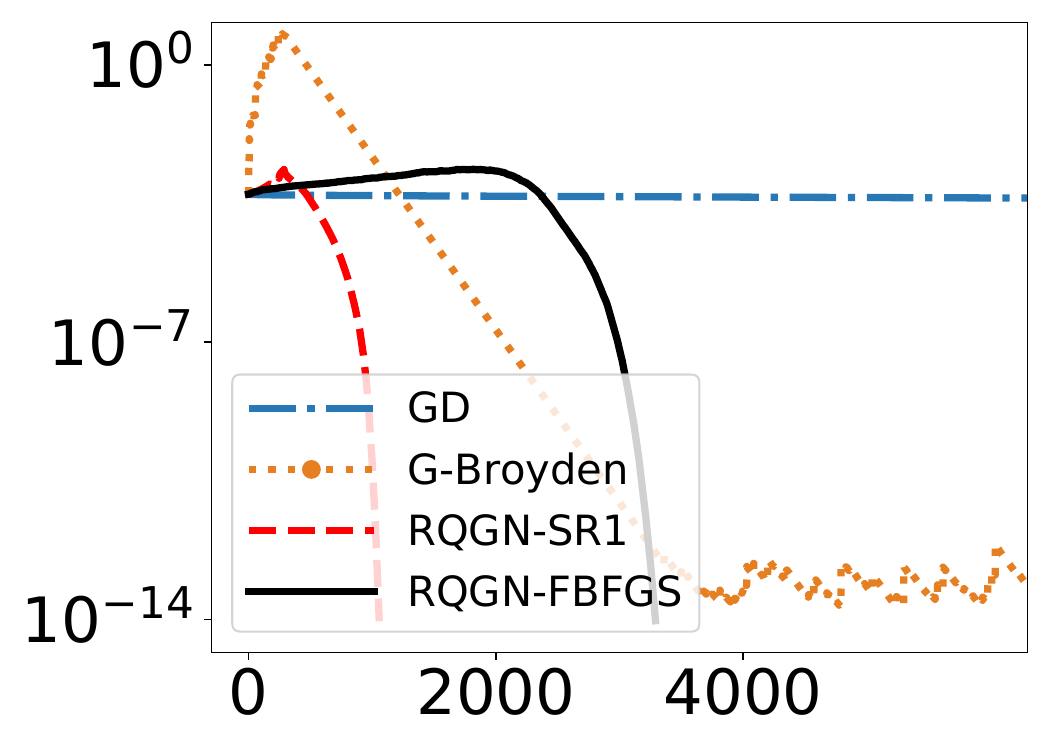} &
\includegraphics[width=0.327\textwidth,trim=11pt 6pt 11pt 6pt,clip]{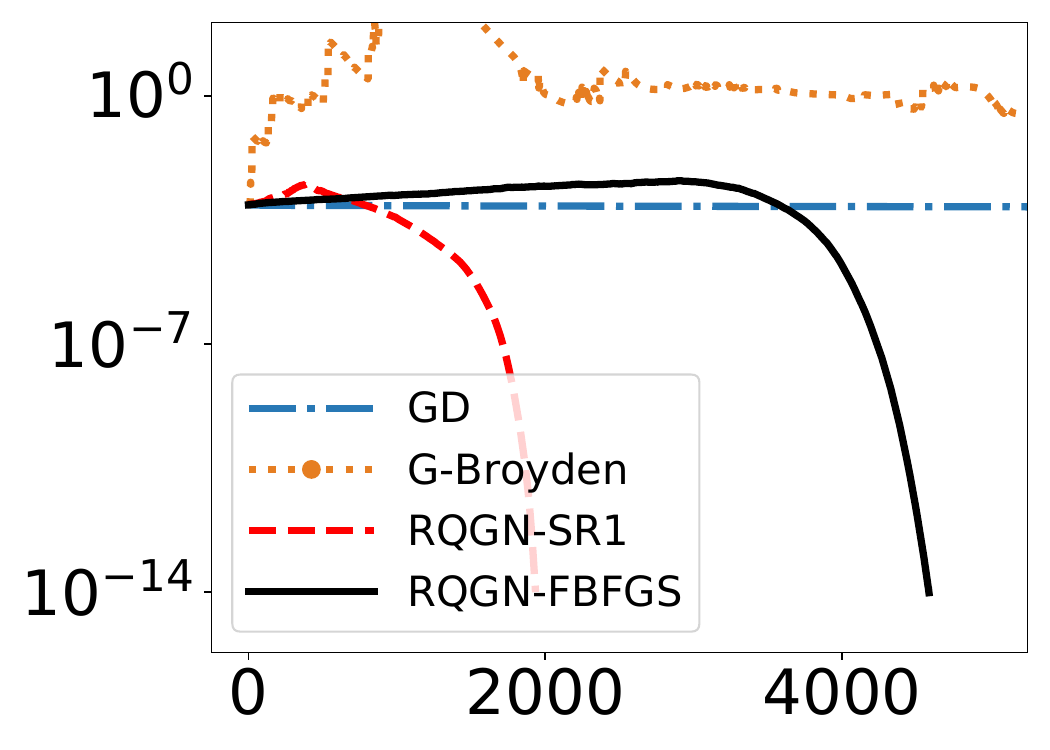} &
\includegraphics[width=0.327\textwidth,trim=11pt 6pt 11pt 6pt,clip]{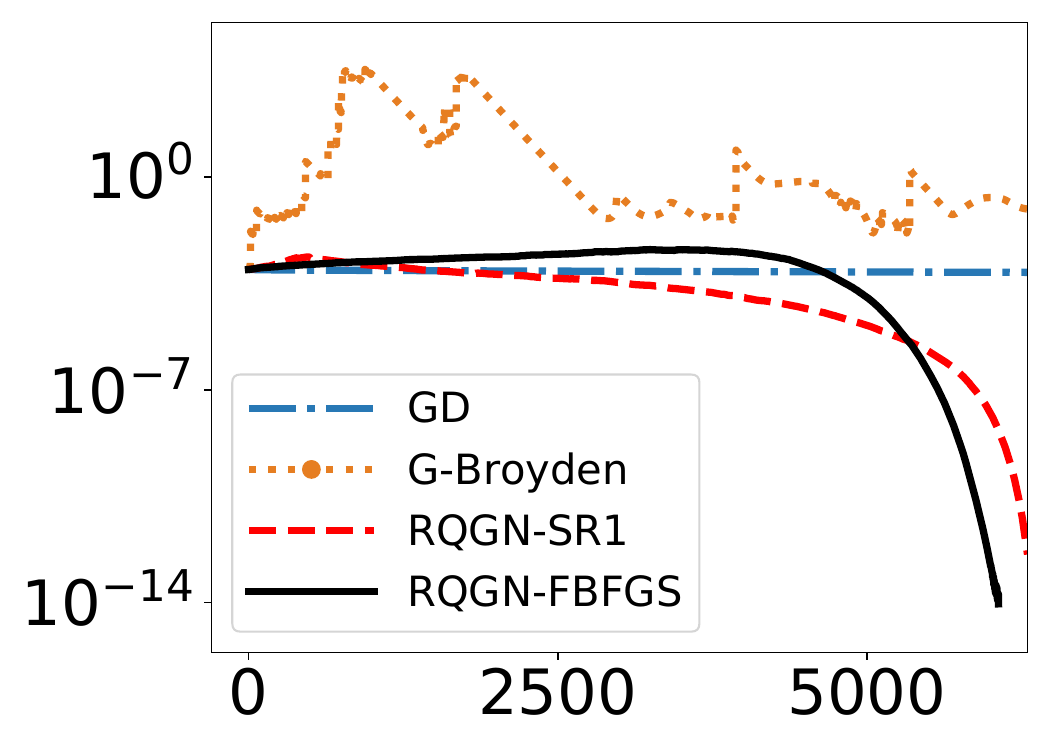} \\[-1pt]
{\footnotesize (a) $n=300$ (iteration)} & {\footnotesize (b) $n=400$ (iteration)} & {\footnotesize (c) $n=500$ (iteration)} \\[10pt]
\includegraphics[width=0.327\textwidth,trim=11pt 6pt 11pt 6pt,clip]{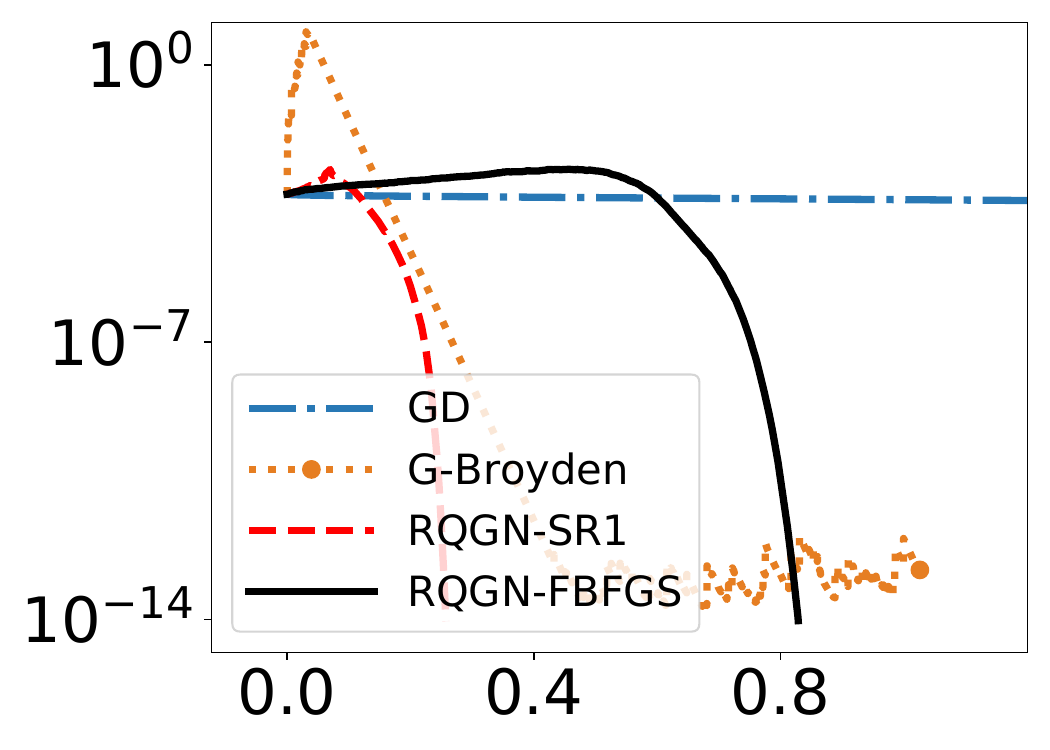} &
\includegraphics[width=0.327\textwidth,trim=11pt 6pt 11pt 6pt,clip]{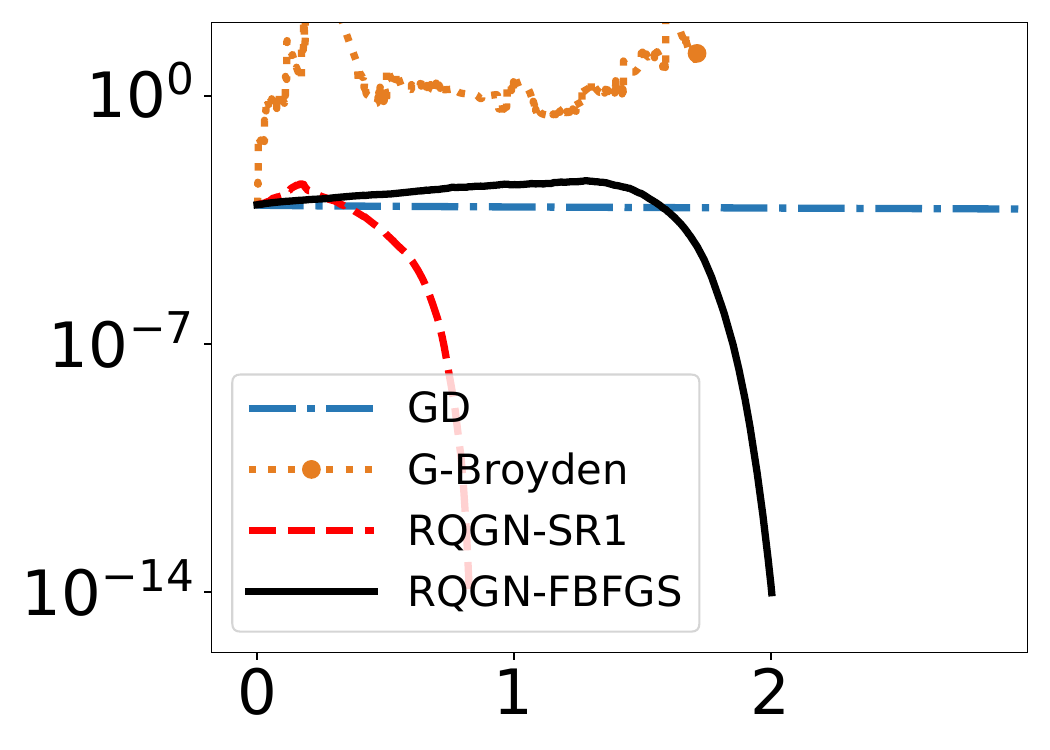} &
\includegraphics[width=0.327\textwidth,trim=11pt 6pt 11pt 6pt,clip]{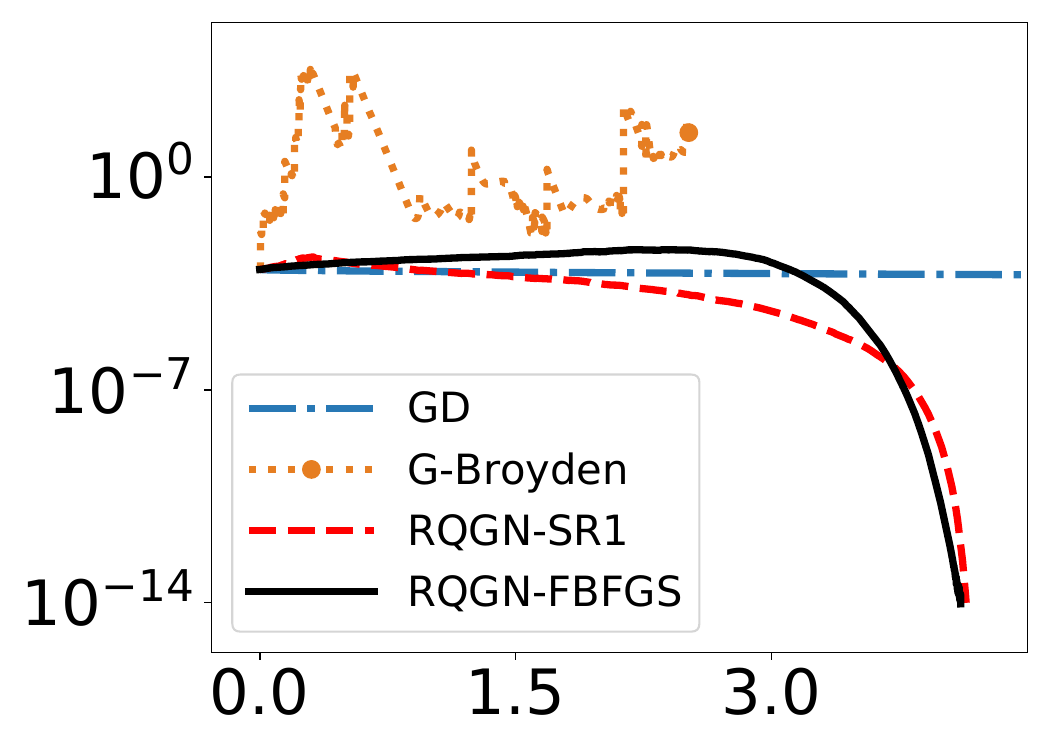} \\[-1pt]
{\footnotesize (d) $n=300$ (time)} & {\footnotesize (e) $n=400$ (time)} & {\footnotesize (f) $n=500$ (time)}
\end{tabular}
\endgroup
\caption{Empirical results of iteration counts and CPU time (seconds) against $\|\F(\x)\|$ for the underdetermined problem with nonlinear mapping \eqref{eq:eigenfinding}.
}
\label{fig:eigen_find}
\end{figure}


\subsection{Empirical Results for the Overdetermined Cases}
We then consider the generalized linear model \citep{shao2025convergence}, which can be formulated by  the nonlinear least square problem \eqref{eq:nonlinear_least_obj} with $\mF:\BR^d\to\BR^{2d}$ such that
\begin{align}
\label{eq:overfit}
    \F(\x)  \triangleq \begin{bmatrix}
        \nabla f_1(\x)\\
        \alpha\nabla f_2(\x)
    \end{bmatrix},
\end{align}
where $\alpha>0$ is the scaled parameter and $f_1:\BR^d\to\BR$ and $f_2:\BR^d\to\BR$ are objectives of log-sum-exp and logistic regression problems  \citep{rodomanov2021greedy} which are defined as 
\begin{align*}
f_1(\x)\triangleq \ln\left(\sum_{j=1}^N\exp(\va_j^{\top}\x-b_j)\right)+\frac{1}{2}\sum_{j=1}^N(\va_j^\top\x)^2
~\text{and}~f_2(\x)\triangleq \sum_{j=1}^N
 \ln\left(1+\exp\left(-b_j\va_j^{\top}\x\right)\right),
\end{align*}
where $\{(\va_i, b_i)\}_{i=1}^N$ is the training set with the feature~$\va_j\in\BR^d$ and the corresponding label~$b_j\in\BR$.
Here, we set $\alpha = 0.1$ in our experiments.

We compare our RQGN methods against the baseline GD by tuning the stepsize over $0.01$, $0.1$, $1$, $10$, and~$100$.
We perform the experiments on three real-world data sets, ``a1a'' ($N=1,605$, $d=123$), ``w1a'' ($N=2,477$, $d=300$), and ``splice'' ($N=1,000$, $d=60$), which can be downloaded from the LIBSVM repository~\citep{CC01a}.
We report empirical results on iterations and CPU time for all three data sets in Figure~\ref{fig:overfit}.
We can observe that our RQGN methods enjoy faster local convergence behavior and take less running time than the baseline.

\begin{figure}[!tbp]
\centering
\begingroup
\setlength{\tabcolsep}{1pt}
\renewcommand{\arraystretch}{0.96}
\begin{tabular}{@{}ccc@{}}
\includegraphics[width=0.327\textwidth,trim=11pt 6pt 11pt 6pt,clip]{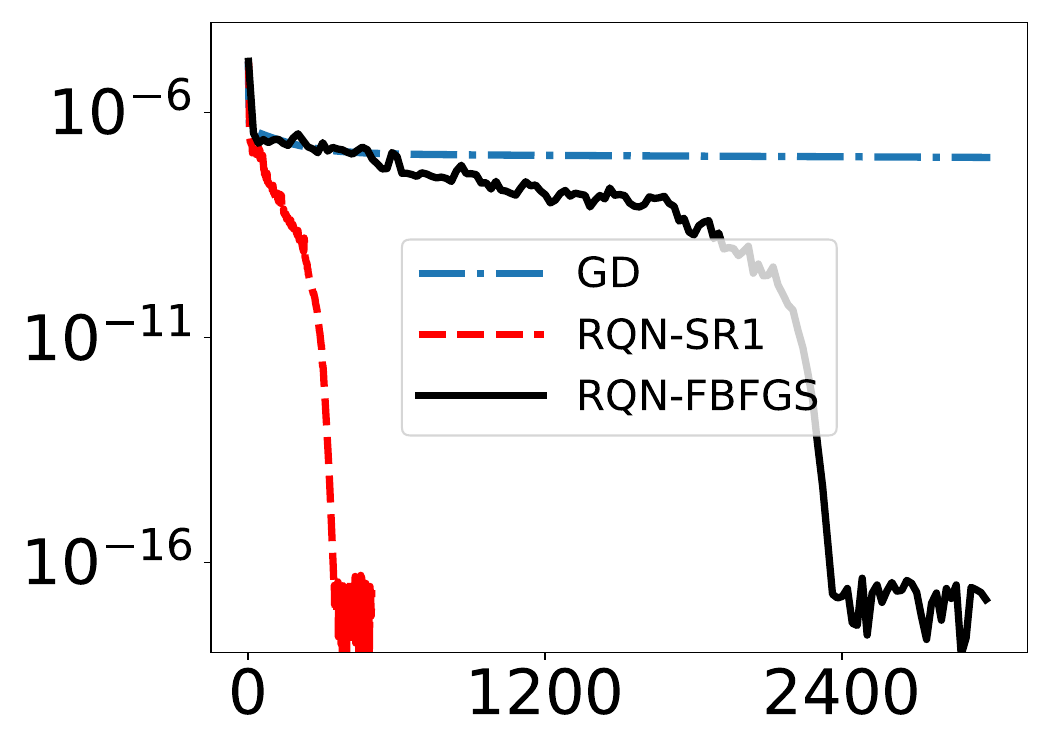} &
\includegraphics[width=0.327\textwidth,trim=11pt 6pt 11pt 6pt,clip]{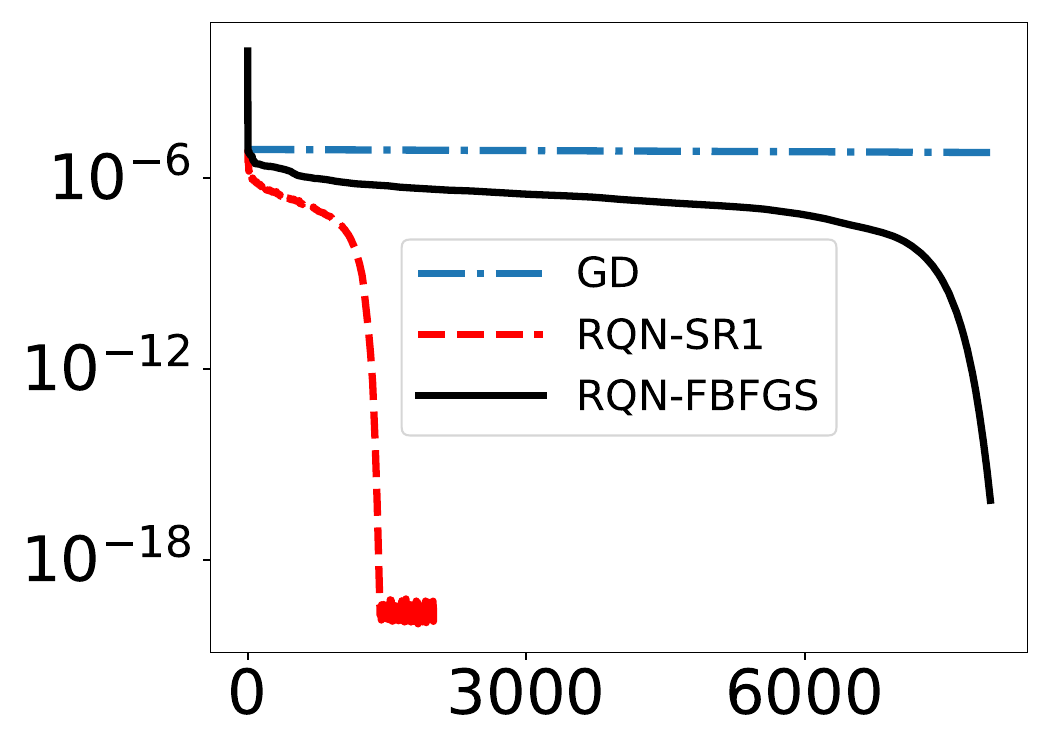} &
\includegraphics[width=0.327\textwidth,trim=11pt 6pt 11pt 6pt,clip]{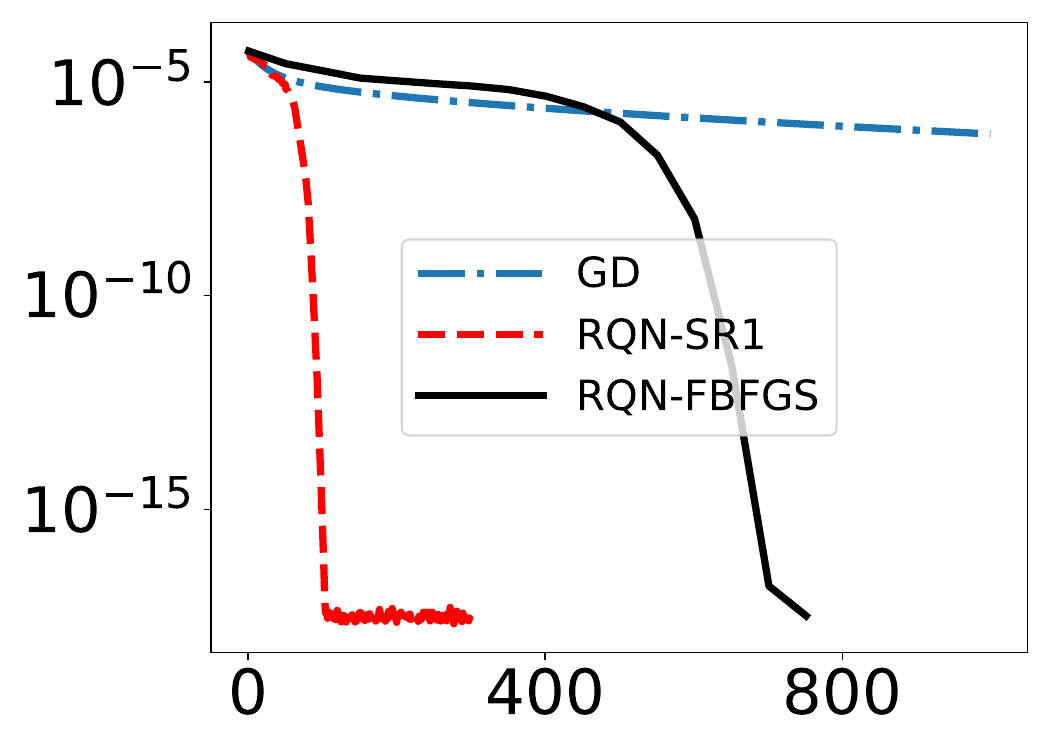} \\[-1pt]
{\footnotesize (a) ``a1a'' (iteration)} & {\footnotesize (b) ``w1a'' (iteration)} & {\footnotesize (c) ``splice'' (iteration)} \\[10pt]
\includegraphics[width=0.327\textwidth,trim=11pt 6pt 11pt 6pt,clip]{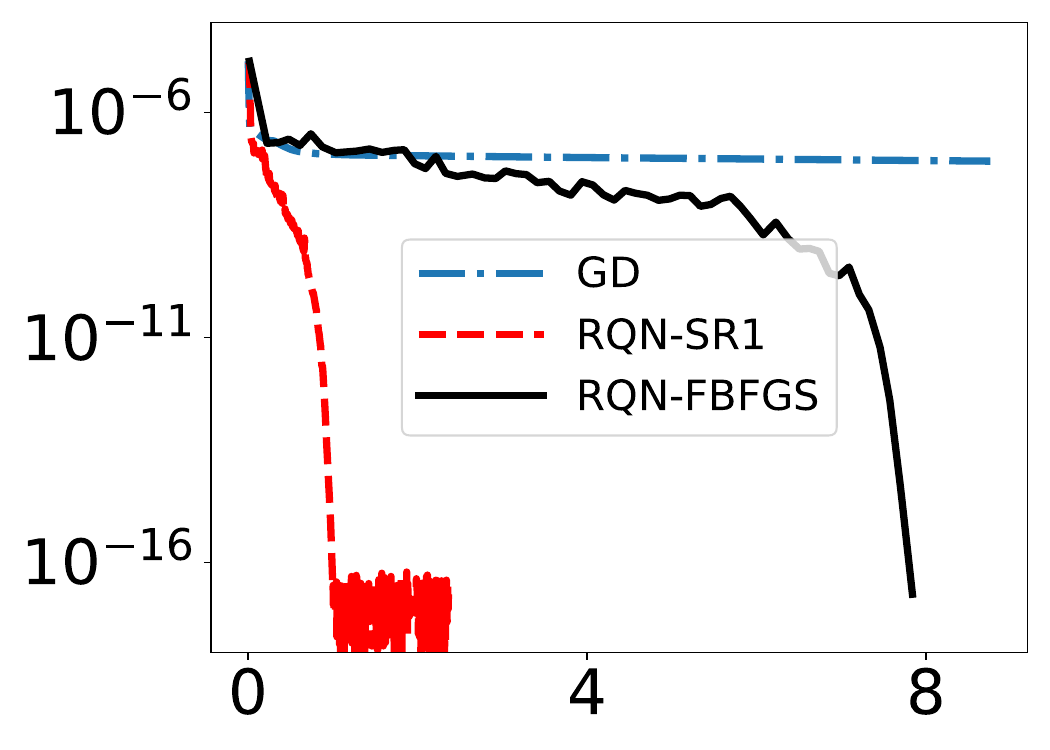} &
\includegraphics[width=0.327\textwidth,trim=11pt 6pt 11pt 6pt,clip]{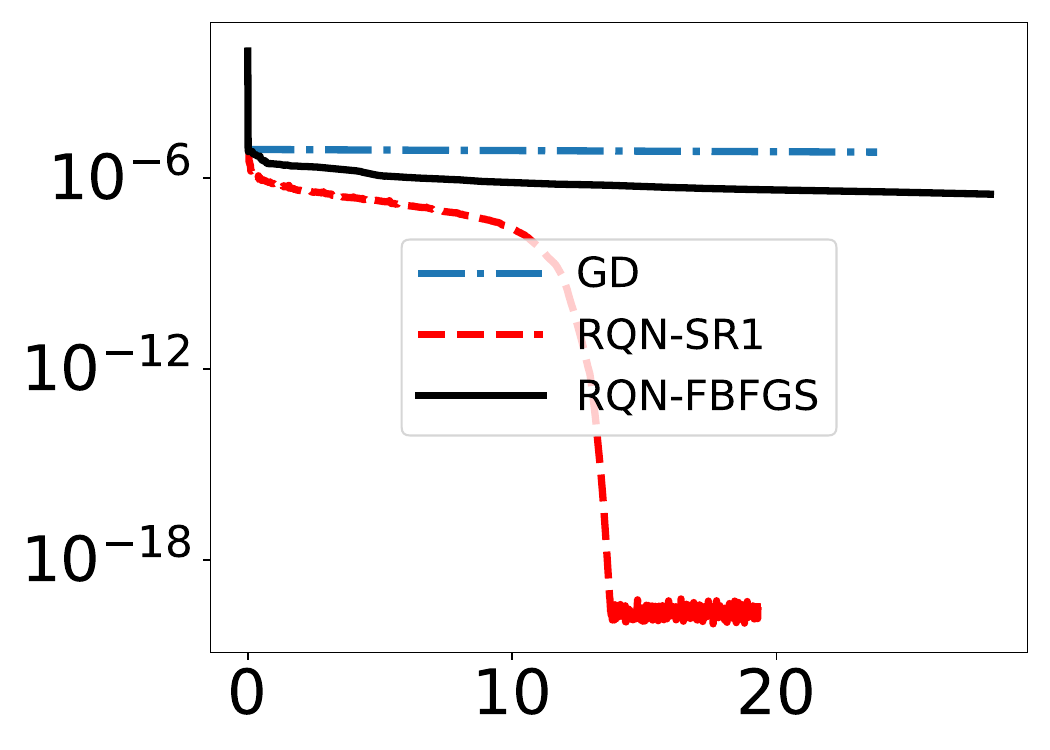} &
\includegraphics[width=0.327\textwidth,trim=11pt 6pt 11pt 6pt,clip]{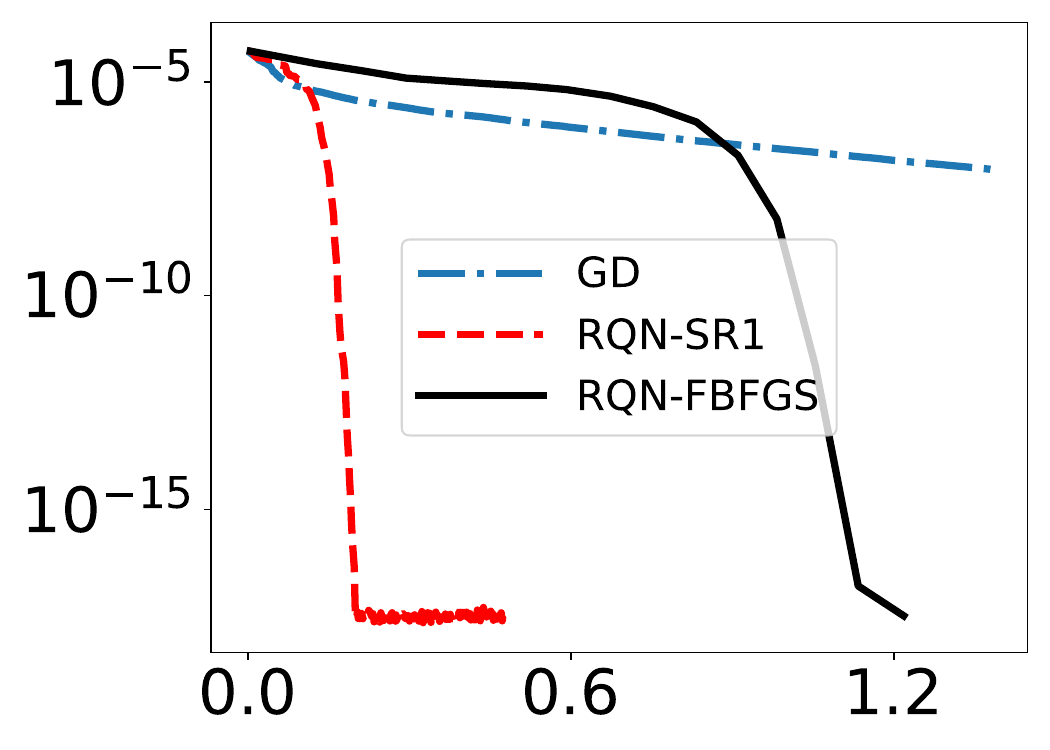} \\[-1pt]
{\footnotesize (d) ``a1a'' (time)} & {\footnotesize (e) ``w1a'' (time)} & {\footnotesize (f) ``splice'' (time)}
\end{tabular}
\endgroup
\caption{Empirical results of iteration counts and CPU time (seconds) against $\|\J(\x)^{\top}\F(\x)\|$ for the overdetermined problem with nonlinear mapping \eqref{eq:overfit}.}
\label{fig:overfit}
\end{figure}

\subsection{Numerical Results for the Square Case}
We also consider the problem of fairness-aware machine learning, which can be formulated as the minimax optimization problem
\begin{align}
\label{eq:minimax_fair}
\min_{\u\in\RB^{d-1}}\max_{v\in\RB}    f(\u,v) \triangleq \frac{1}{N}\sum_{i=1}^N(l_1(\va_i,b_i,\u)-\beta l_2(\va_i^\top\u,c_i,v))+\lambda\|\u\|^2-\gamma v^2,
\end{align}
where $(\vu,v)\in\BR^d\times\BR$ is the weights of the model, $\{(\va_i,b_i,c_i)\}_{i=1}^N$ is the training set such that $\va_i\in\RB^{d}$ contains all the input variables, $b_i\in\RB$ is the output, $c_i\in\RB$ is the input variable that we want to make it unbiased~\citep{lowd2005adversarial,zhang2018mitigating}, and~$\lambda,\beta,\gamma>0$ are the hyperparameters.
Here, the loss $l_1$ and $l_2$ are the logit functions, defined as ${\rm logit}(\va,b,\w)= \log(1+\exp(-b\va^\top\w))$.
The minimax optimization problem (\ref{eq:minimax_fair}) can be viewed as solving the square nonlinear equation $\mF(\vx)=\vzero$ such that
\begin{align}\label{eq:min-max}
\x = \begin{bmatrix}
\u \\ v    
\end{bmatrix}
\qquad\text{and}\qquad
\F(\x)\triangleq \begin{bmatrix}
~~\nabla_{\u}f(\u,v)\\
-\nabla_v f(\u,v)
\end{bmatrix}.
\end{align}

We compare our RQGN methods with the extragradient method (EG)~\citep{korpelevich1976extragradien,tseng1995linear}, which is the optimal first-order method for strongly-convex strongly-concave optimization.
For EG, we tune the step size from $\{0.01,0.05, 0.1, 0.5, 1\}$.
We set the parameters $\beta,\lambda$, and $\gamma$ as $0.5$, $0.001$, and $0.001$, respectively.
We perform the experiments on datasets ``heart'' ($N=270$, $d = 13$)~\citep{CC01a},  ``adult'' ($N= 32,561$, $d= 123$), and ``law school'' ($N=20,798$, $d= 380$)~\citep{quy2021survey}.
We report empirical results on iterations and CPU time in
Figure~\ref{fig:min-max}.
We observe that our RQGN methods converge much faster than EG.

\begin{figure}[!tbp]
\centering
\begingroup
\setlength{\tabcolsep}{1pt}
\renewcommand{\arraystretch}{0.96}
\begin{tabular}{@{}ccc@{}}
\includegraphics[width=0.327\textwidth,trim=11pt 6pt 11pt 6pt,clip]{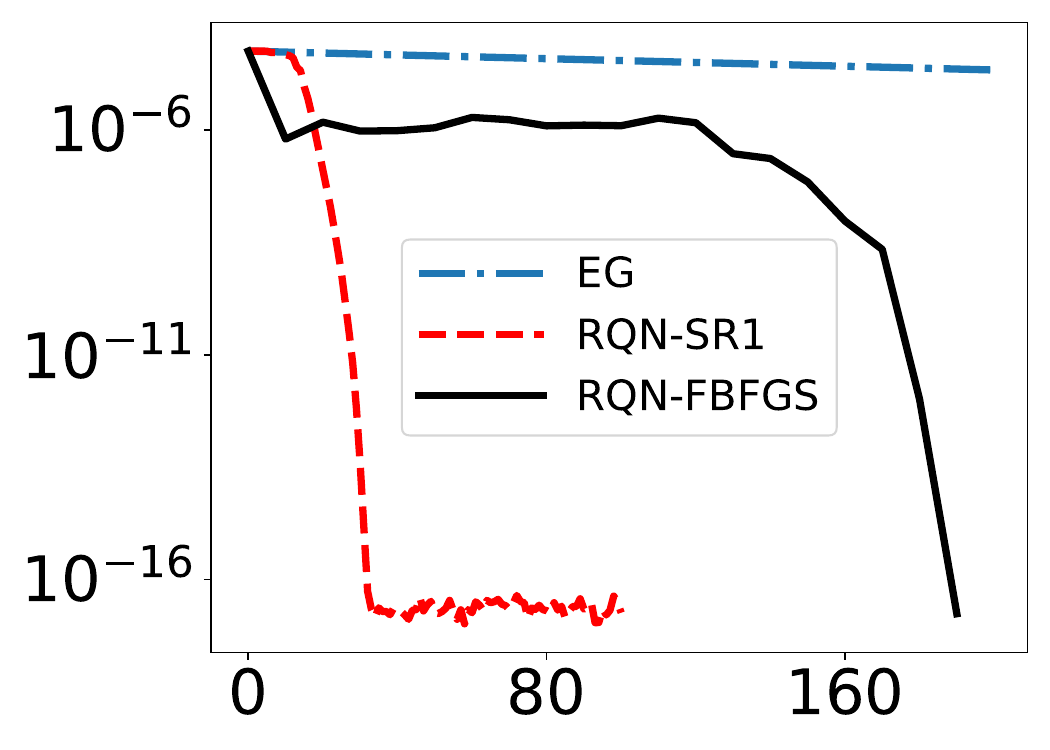} &
\includegraphics[width=0.327\textwidth,trim=11pt 6pt 11pt 6pt,clip]{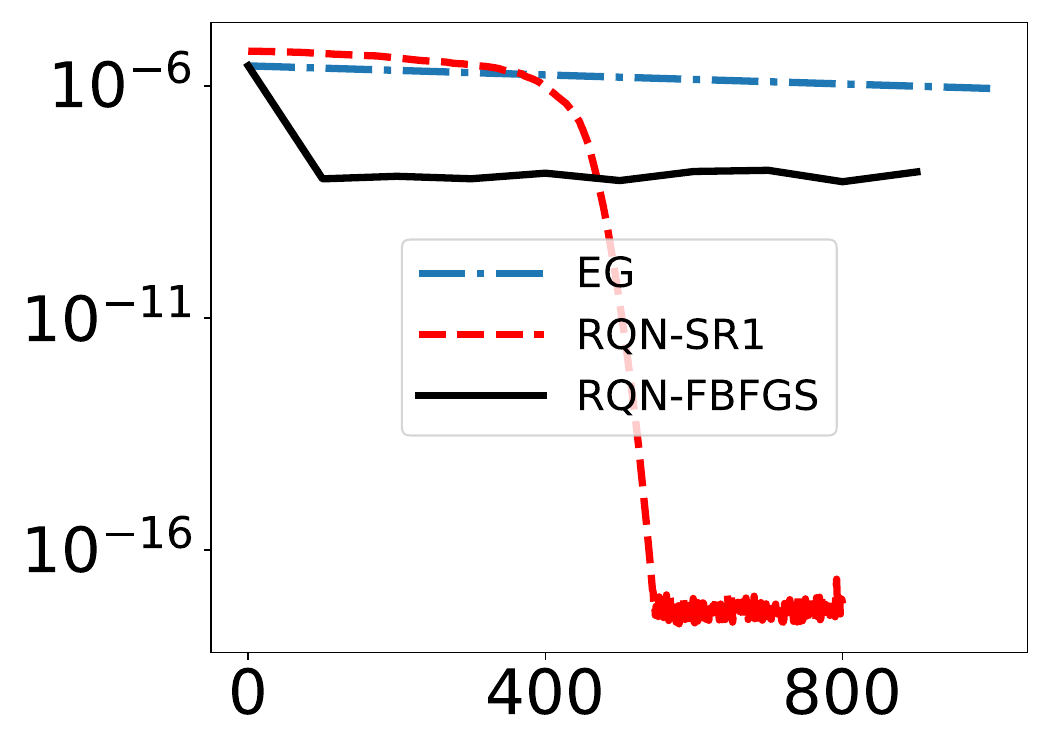} &
\includegraphics[width=0.327\textwidth,trim=11pt 6pt 11pt 6pt,clip]{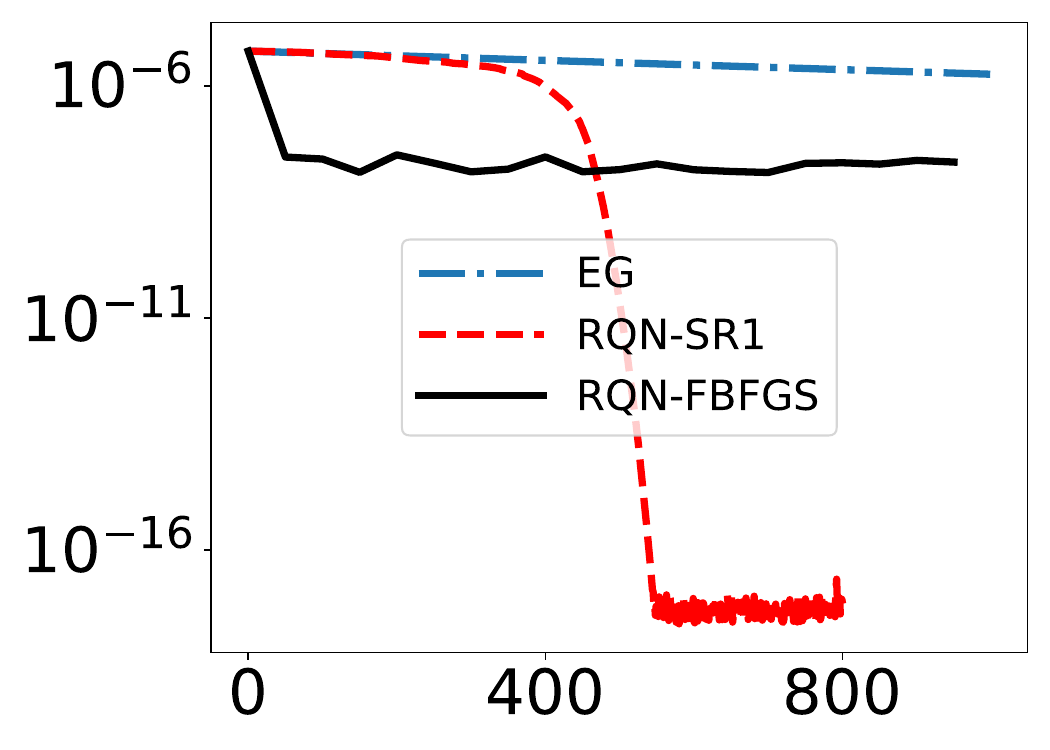} \\[-1pt]
{\footnotesize (a) ``heart'' (iteration)} & {\footnotesize (b) ``adult'' (iteration)} & {\footnotesize (c) ``law school'' (iteration)} \\[10pt]
\includegraphics[width=0.327\textwidth,trim=11pt 6pt 11pt 6pt,clip]{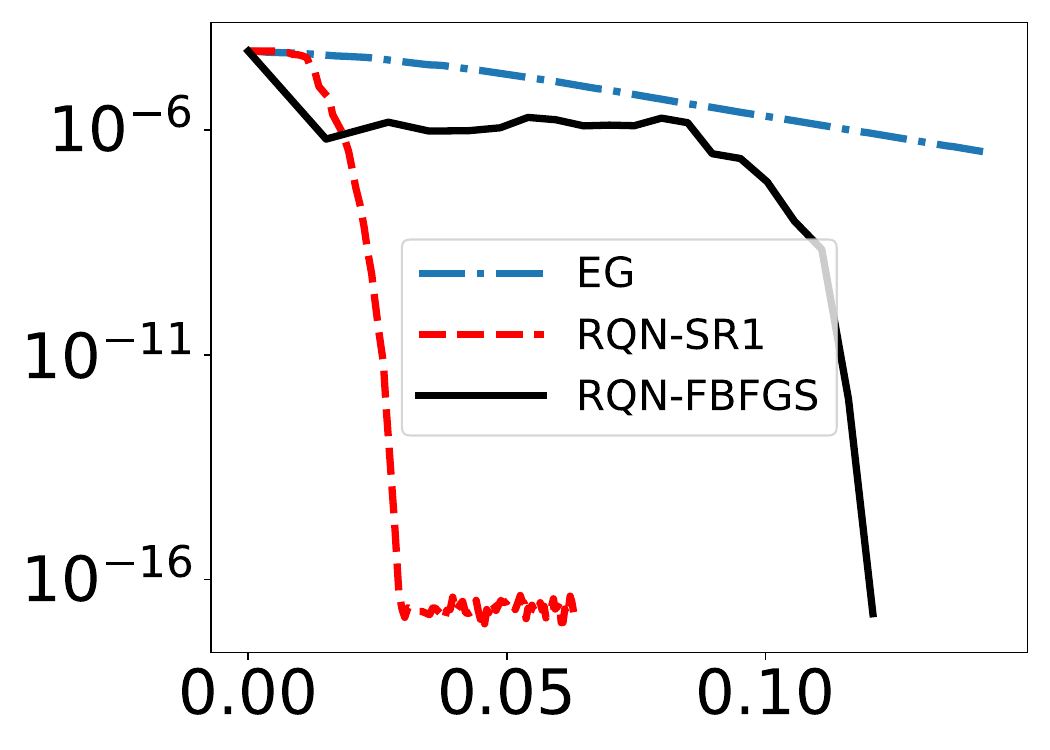} &
\includegraphics[width=0.327\textwidth,trim=11pt 6pt 11pt 6pt,clip]{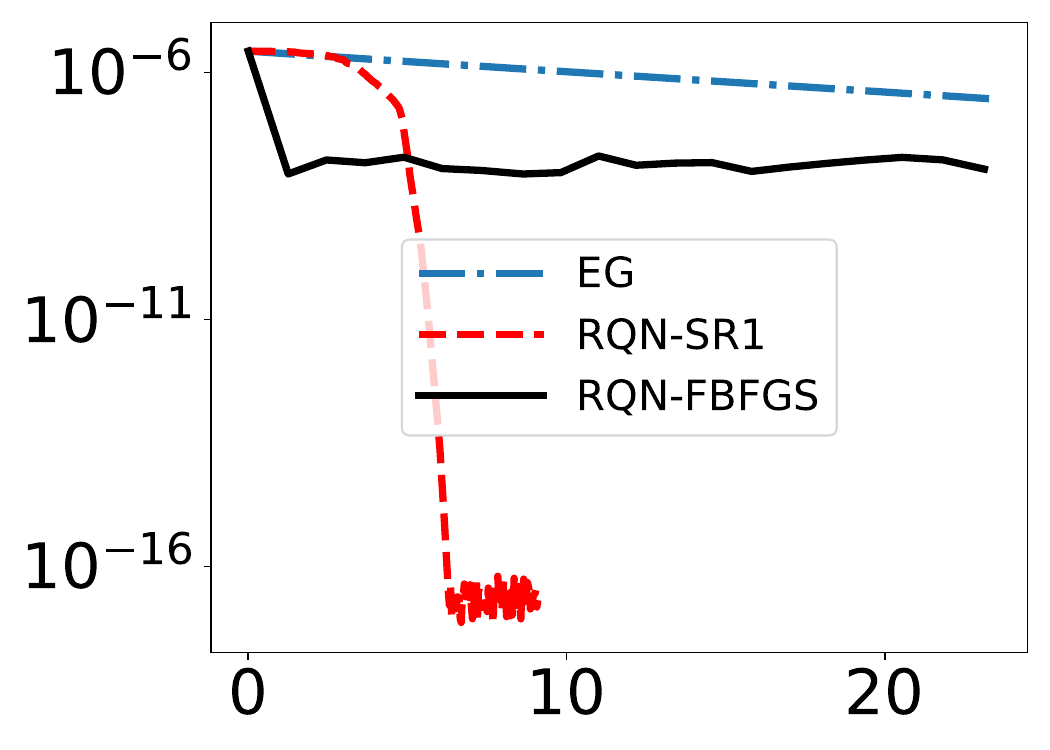} &
\includegraphics[width=0.327\textwidth,trim=11pt 6pt 11pt 6pt,clip]{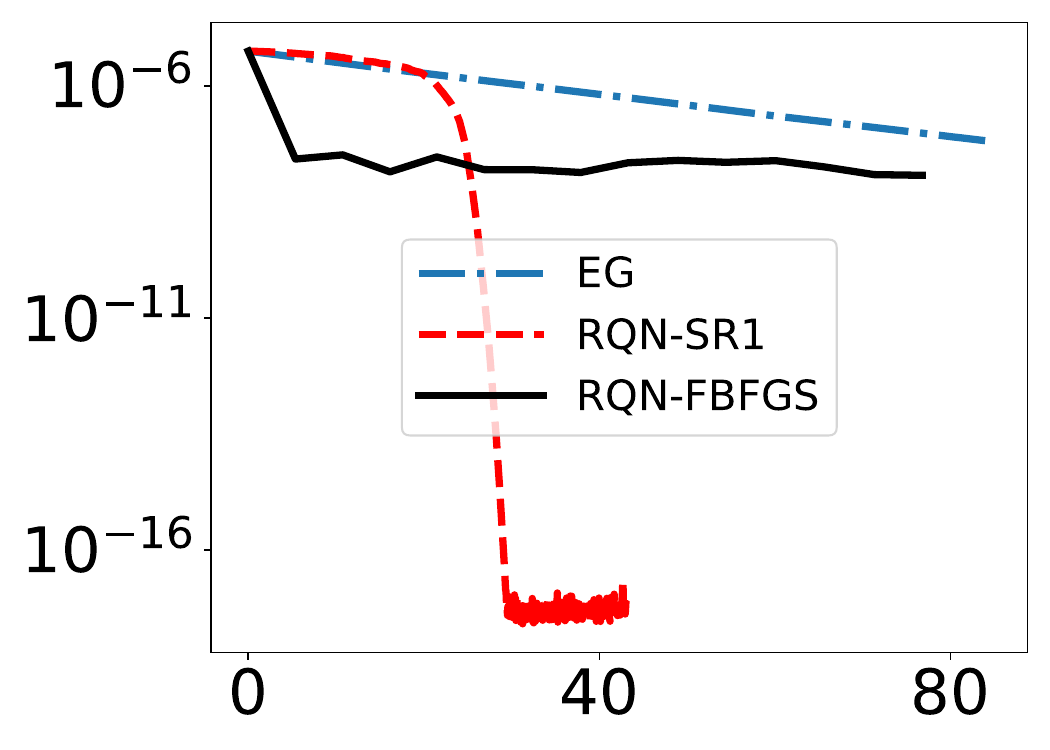} \\[-1pt]
{\footnotesize (d) ``heart'' (time)} & {\footnotesize (e) ``adult'' (time)} & {\footnotesize (f) ``law school'' (time)}
\end{tabular}
\endgroup
\caption{Empirical results of iteration counts and CPU time (seconds) against $\|\F(\x)\|$ for the square system with nonlinear mapping \eqref{eq:overfit}.}
\label{fig:min-max}
\end{figure}


\section{Conclusion and Future Work}
\label{sec:conclu}
In this paper, we propose randomized quasi-Gauss--Newton (RQGN) methods for solving both overdetermined and underdetermined nonlinear equations.
We establish local convergence guarantees to show that our RQGN enjoys condition-number-free linear rates for the overdetermined case and superlinear rates for the underdetermined equation.
Notably, our convergence results do not depend on the sufficiently accurate initial Jacobian estimator that is required by Broyden's methods.
The empirical results validate the efficiency of our proposed RQGN methods.
In future work, it is interesting to study the global convergence of quasi-Newton methods for general nonlinear equations, extending the existing results for convex optimization and monotone variational inequalities~\citep{jin2024non,jin2024nonasymptotic,rodomanov2024global,jiang2024online,jiang2024improved,kamzolov2023accelerated,agafonov2024exploring,wang2024global}. 


\appendix

\section{Proofs for Sections~\ref{sec:pre} and \ref{sec:rqn} }
\label{app:proofs_preliminaries}
We provide detailed proofs for the propositions in Sections~\ref{sec:pre} and \ref{sec:rqn}.

\subsection{Proof of Proposition~\ref{prop:gram_lip}}
\label{app:proof_prop_gram_lip}
\begin{proof}
The $L$-Lipschitz continuity of $\F(\,\cdot\,)$ follows directly from~equation \eqref{eq:lip}.
For the partial relation and the Lipschitz continuity of $\mH(\cdot)$, we focus on the case of $n\geq d$. 
Under Assumption~\ref{ass:Jlip}, we achieve
\begin{align*}
\|\H(\x)\|
&=\|\J(\x)^{\top}\J(\x)\|
\leq\|\J(\x)^{\top}\|\|\J(\x)\|
\leq L^2,
\end{align*}
which means $\H(\x)\preceq L^2\I_m$.
Besides, it also holds
\begin{align*}
    \|\H(\x)-\H(\y)\|\leq \big\|\J(\x)^{\top}\big\|\Norm{\J(\x)-\J(\y)}+\big\|\J(\x)^{\top}-\J(\y)^{\top}\big\|\Norm{\J(\y)}\leq 2LL_2\|\x-\y\|.
\end{align*}
The proof for the case $n<d$ is almost identical to that of $n\geq d$, so we omit it here.
\end{proof}

\subsection{Proof of Proposition~\ref{prop:gram_lower}}
\label{app:proof_prop_gram_lower}
\begin{proof}
Under Assumption~\ref{ass:nonsingular}, it holds
$\sigma_m(\H(\x))=\sigma_m^{2}(\J(\x))\geq\mu^2$,    
which implies $\mu^2\I_m\preceq\H(\x)$. 
\end{proof}


\subsection{Proof of Proposition~\ref{prop:Gram_concor}}
\label{app:proof_prop_gram_concordance}
\begin{proof}
From Proposition~\ref{prop:gram_lip}, we have
    \begin{align*}
    -2L_2L \|\x-\y\|\cdot\I_m \preceq   \H(\y)-\H(\x)\preceq 2L_2L \|\x-\y\|\cdot\I_m.
    \end{align*}
    By Proposition~\ref{prop:gram_lower}, we have
    \begin{align*}
     -\frac{2L_2L}{\mu^2}\|
    \x-\y\| \H(\y)\preceq   \H(\y)-\H(\x)\preceq \frac{2L_2L}{\mu^2}\|\x-\y\|\H(\x),
    \end{align*}
    rearranging above inequalities directly indicates equation \eqref{eq:Gram_bound_0}.
\end{proof}

\section{Proofs for Section~\ref{sec:local_convergence}}
\label{app:proofs}
We first provide some basic results for later proofs.

\begin{lemma}
\label{lm:matrix_order_equivalence}
Let $\A\in\BR^{m\times m}$ be positive definite and
$\B\in\BR^{m\times m}$ be symmetric.
For $a_1,a_2\in\BR$ such that $a_1\leq a_2$, the following two statements are equivalent: 
\begin{itemize}
\item $a_1\A\preceq\B\preceq a_2\A$;
\item $a_1\I_m\preceq\A^{-1/2}\B\A^{-1/2}\preceq a_2\I_m$
\end{itemize}
\end{lemma}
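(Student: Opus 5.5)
The plan is a congruence argument. Since $\A$ is positive definite, it has a unique symmetric positive definite square root $\A^{1/2}$, and its inverse $\A^{-1/2}$ is also symmetric and nonsingular. The key fact is that congruence by a nonsingular matrix preserves the Loewner order. Concretely, if $\mathbf{P}$ is nonsingular and $\mathbf{X}$ is symmetric, then $\mathbf{X}\succeq\0$ if and only if $\mathbf{P}^{\top}\mathbf{X}\mathbf{P}\succeq\0$. This holds because $\vv^{\top}\mathbf{P}^{\top}\mathbf{X}\mathbf{P}\vv=(\mathbf{P}\vv)^{\top}\mathbf{X}(\mathbf{P}\vv)$, and $\vv\mapsto\mathbf{P}\vv$ is a bijection of $\BR^m$.

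For the forward direction, assume $a_1\A\preceq\B\preceq a_2\A$. Equivalently, $\B-a_1\A\succeq\0$ and $a_2\A-\B\succeq\0$. Apply the congruence with $\mathbf{P}=\A^{-1/2}$, which is symmetric, so $\mathbf{P}^{\top}=\A^{-1/2}$. Using $\A^{-1/2}\A\A^{-1/2}=\I_m$, this gives $\A^{-1/2}\B\A^{-1/2}-a_1\I_m\succeq\0$ and $a_2\I_m-\A^{-1/2}\B\A^{-1/2}\succeq\0$.

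For the reverse direction, run the same argument with $\mathbf{P}=\A^{1/2}$, starting from $\A^{-1/2}\B\A^{-1/2}-a_1\I_m\succeq\0$. Using $\A^{1/2}\A^{-1/2}\B\A^{-1/2}\A^{1/2}=\B$ and $\A^{1/2}\I_m\A^{1/2}=\A$, this recovers $\B-a_1\A\succeq\0$. The upper inequality follows in the same way.

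There is no real obstacle here. The only point needing care is that $\B$ is symmetric, which makes both sides of each Loewner inequality symmetric so the order is well defined. The hypothesis $a_1\leq a_2$ is used only to ensure the sandwich is consistent; it plays no role in the equivalence itself.
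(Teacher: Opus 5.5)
Your proof is correct and is essentially the paper's argument. The paper proves each direction by evaluating the quadratic forms at $\A^{-1/2}\vv$ and at $\A^{1/2}\vv$ respectively, which is exactly your congruence by $\mathbf{P}=\A^{-1/2}$ and $\mathbf{P}=\A^{1/2}$ written out vector by vector.
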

\begin{proof}
Suppose the first statement $a_1\A\preceq\B\preceq a_2\A$ holds.
For all $\v\in\BR^m$, applying this relation to vector $\A^{-1/2}\v$ indicates
\begin{align*}
a_1\|\v\|^2
&=a_1(\A^{-1/2}\v)^{\top}\A(\A^{-1/2}\v)\leq
(\A^{-1/2}\v)^{\top}\B(\A^{-1/2}\v)\\
&=\v^{\top}\A^{-1/2}\B\A^{-1/2}\v
\leq a_2(\A^{-1/2}\v)^{\top}\A(\A^{-1/2}\v)
=a_2\|\v\|^2.
\end{align*}
Hence, we achieve $a_1\I_m\preceq\A^{-1/2}\B\A^{-1/2}\preceq a_2\I_m$.

Conversely, suppose the second statement
$a_1\I_m\preceq\A^{-1/2}\B\A^{-1/2}\preceq a_2\I_m$ holds.
For all~$\v\in\BR^m$, applying this relation to vector $\A^{1/2}\v$ indicates
\begin{align*}
a_1\v^{\top}\A\v
&=a_1\|\A^{1/2}\v\|^2
\leq(\A^{1/2}\v)^{\top}\A^{-1/2}\B\A^{-1/2}(\A^{1/2}\v)\\
&=\v^{\top}\B\v
\leq a_2\|\A^{1/2}\v\|^2
=a_2\v^{\top}\A\v.
\end{align*}
Hence, we achieve
$a_1\A\preceq\B\preceq a_2\A$.
\end{proof}

We next state the following lemma.
\begin{lemma}[{\citet[Lemma~26]{lin2022explicit}}]
\label{lm:lin_lemma26}
Let $\{X_t\}_{t\geq0}$ be a sequence of nonnegative random variables.
Suppose that there exist constants $a\geq0$ and $\tau>1$ such that
\begin{align*}
\mathbb E[X_t]
&\leq a\biggl(1-\frac{1}{\tau}\biggr)^t
\end{align*}
for every $t\geq0$. Then, for all $p\in(0,1)$, with probability at
least $1-p$, we have
\begin{align*}
X_t
&\leq\frac{a\tau^2}{p}
\biggl(1-\frac{1}{1+\tau}\biggr)^t
\end{align*}
simultaneously for every $t\geq0$.
\end{lemma}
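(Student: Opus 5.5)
We derive the high-probability bound from Markov's inequality applied at each $t$, combined with a union bound over $t$. The slack in the rate (passing from $1-1/\tau$ to $1-1/(1+\tau)$) is what makes the union bound summable.

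For each $t\geq0$, set the threshold
\begin{align*}
s_t\triangleq\frac{a\tau^2}{p}\biggl(1-\frac{1}{1+\tau}\biggr)^t=\frac{a\tau^2}{p}\biggl(\frac{\tau}{1+\tau}\biggr)^t .
\end{align*}
If $a=0$, then $X_t=0$ almost surely and there is nothing to prove, so assume $a>0$. Markov's inequality and the hypothesis give
\begin{align*}
\Pr(X_t>s_t)\leq\frac{\mathbb E[X_t]}{s_t}\leq\frac{p}{\tau^2}\biggl(\frac{(\tau-1)/\tau}{\tau/(1+\tau)}\biggr)^t=\frac{p}{\tau^2}\biggl(\frac{\tau^2-1}{\tau^2}\biggr)^t=\frac{p}{\tau^2}\biggl(1-\frac{1}{\tau^2}\biggr)^t .
\end{align*}

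Next we take a union bound over all $t\geq0$. This requires
\begin{align*}
\sum_{t\geq0}\frac{p}{\tau^2}\biggl(1-\frac{1}{\tau^2}\biggr)^t=\frac{p}{\tau^2}\cdot\tau^2=p ,
\end{align*}
where the geometric series converges because $\tau>1$ gives $0\leq 1-1/\tau^2<1$. Hence the event $\{X_t\leq s_t\ \text{for all } t\}$ has probability at least $1-p$, which is exactly the claim.

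The only point needing care is the choice of the enlarged rate $\tau/(1+\tau)$ and the prefactor $\tau^2/p$. The ratio of the two geometric rates collapses to $(\tau^2-1)/\tau^2$, and its series sums to exactly $\tau^2$, so the factor $\tau^2$ in the threshold cancels it and the failure probability comes out exactly $p$. Once this identity $\frac{\tau-1}{\tau}\cdot\frac{1+\tau}{\tau}=1-\tau^{-2}$ is in hand, no step is a genuine obstacle.
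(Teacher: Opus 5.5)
Your proof is correct and is essentially the standard argument behind the cited result: Markov's inequality at each $t$ with the enlarged threshold, then a countable union bound, where the identity $\frac{\tau-1}{\tau}\cdot\frac{1+\tau}{\tau}=1-\tau^{-2}$ makes the failure probabilities sum to exactly $p$. The paper does not reprove this lemma (it cites \citet{lin2022explicit}), but it uses the same Markov-plus-union-bound pattern in the high-probability part of the proof of Theorem~\ref{thm:two-stage-n>d}.
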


\subsection{Proof of Lemma~\ref{lm:Gram_approximate}}
\label{app:proof_lemma_gram_approximate}
\begin{proof}
We first use induction to prove 
\begin{align}\label{eq:induction-G-H}
    \H_t\preceq\G_t
\end{align}
holds for all $t\geq 0$.
The induction base in the case of $t=0$ directly holds by our assumption.
For the induction step, we suppose the partial relation \eqref{eq:induction-G-H} holds for all $t=\hat t$. 
Note that Proposition~\ref{prop:Gram_concor} implies
\begin{align}
\label{eq:gram_transport_lm}
\frac{1}{1+Mr_{\hat t}}\H_{\hat t}
\preceq \H_{{\hat t}+1}
\preceq(1+Mr_{\hat t})\H_{\hat t}.
\end{align}
Hence, the induction hypothesis $\H_{\hat t}\preceq\G_{\hat t}$ implies
\begin{align*}
\H_{{\hat t}+1}\overset{\eqref{eq:gram_transport_lm}}\preceq(1+Mr_{\hat t})\H_{\hat t}
\preceq(1+Mr_{\hat t})\G_{\hat t}=\widetilde{\G}_{\hat t}.
\end{align*}
Applying Lemma~\ref{lm:QN_order} to the update \eqref{eq:update_G}  yields $\H_{\hat t+1}\preceq\G_{\hat t+1}$, which finished the induction.

We then show the relation $\mG_t\preceq(1+\delta_t)\H_t$.
The relation \eqref{eq:induction-G-H} implies
\begin{align*}
\H_t^{-1/2}(\G_t-\H_t)\H_t^{-1/2}\succeq\0.
\end{align*}
Recall that all positive semi-definite matrix $\A\in\BR^{m\times m}$ satisfies
$\A\preceq\tr{\A}\I_m$. 
Applying this fact with $\mA=\H_t^{-1/2}(\G_t-\H_t)\H_t^{-1/2}$, we achieve
\begin{align}\label{eq:delta-GH-1}
\begin{split}    
\G_t
&=\H_t^{1/2}\Bigl(\I_m+\H_t^{-1/2}(\G_t-\H_t)\H_t^{-1/2}\Bigr)\H_t^{1/2}\\
&\preceq\H_t^{1/2}\big(\I_m+{\rm tr}\big(\H_t^{-1/2}(\G_t-\H_t)\H_t^{-1/2}\big)\I_m\big)\H_t^{1/2}\\
&=\big(1+{\rm tr}\big(\H_t^{-1/2}(\G_t-\H_t)\H_t^{-1/2}\big)\big)\H_t\\
&=\big(1+\tr{\H_t^{-1}(\G_t-\H_t)}\big)\H_t \\
&=\big(1+\sigma_{\H_t}(\G_t)\big)\H_t.
\end{split}
\end{align}
Moreover, the partial relation \eqref{eq:induction-G-H} indicates
\begin{align}\label{eq:G-H-I}
\G_t-\H_t\preceq\|\G_t-\H_t\|\I_m     
\end{align}
and
\begin{align}\label{eq:G-H-tr}
\|\G_t-\H_t\|\leq\tr{\G_t-\H_t}=\tau_{\H_t}(\G_t).     
\end{align}
Additionally, Propositions~\ref{prop:gram_lip} and~\ref{prop:gram_lower} imply
\begin{align}\label{eq:H-mu-L}
\mu^2\I_m\preceq\H_t\preceq L^2\I_m.    
\end{align}
Hence, it holds
\begin{align}\label{eq:delta-GH-2}
\begin{split}
\G_t
&\,=\,\H_t+(\G_t-\H_t)\\
&\overset{\eqref{eq:G-H-I}}\preceq\H_t+\|\G_t-\H_t\|\I_m\\
&\overset{\eqref{eq:H-mu-L}}\preceq\left(1+\frac{\|\G_t-\H_t\|}{\mu^2}\right)\H_t \\
&\overset{\eqref{eq:G-H-tr}}\preceq\Bigl(1+\frac{\tau_{\H_t}(\G_t)}{\mu^2}\Bigr)\H_t\\
&\,\preceq\left(1+
\frac{mL^2\tau_{\H_t}(\G_t)}
{\mu^2\tr{\H_t}}\right)\H_t,
\end{split}
\end{align}
where the last step is based on the fact
\begin{align*}
    \tr{\H_t} \overset{\eqref{eq:H-mu-L}}\leq \tr{L^2\mI_m} = mL^2.
\end{align*}
Combining the choices of $\delta_t$ in the statement and results
\eqref{eq:delta-GH-1} and \eqref{eq:delta-GH-2},
we achieve 
\begin{align*}
\H_t\preceq\G_t\preceq(1+\delta_t)\H_t     
\end{align*}
for each of Options I--III.

We now consider the upper bound for $\mathbb E_t[\delta_{t+1}]$.
For Options I and II, the choice of~$\delta_t=\sigma_{\H_t}(\G_t)$ gives
\begin{align}
\label{eq:defdelta_tequal}
    \tr{\H_t^{-1}\G_t}
&=\tr{\H_t^{-1}(\G_t-\H_t)}+\tr{\I_m}
=\delta_t+m.
\end{align}
Consequently, it holds
\begin{align}
    \label{eq:sigma_HttildeGt}
    \begin{split}
\sigma_{\H_{t+1}}(\widetilde{\G}_t)
&={\rm tr}\big(\H_{t+1}^{-1}\widetilde{\G}_t\big)-m\\
&=\tr{\H_{t+1}^{-1}\cdot(1+Mr_t)\G_t}-m\\
&=(1+Mr_t)\tr{\H_{t+1}^{-1}\G_t}-m\\
&\!\overset{\eqref{eq:Gram_bound_0}}{\leq}
(1+Mr_t)^2\tr{\H_t^{-1}\G_t}-m\\
&\!\!\overset{\eqref{eq:defdelta_tequal}}{=}(1+Mr_t)^2(\delta_t+m)-m\\
&=(1+Mr_t)^2\delta_t
  +m\bigl((1+Mr_t)^2-1\bigr)\\
&\leq(1+Mr_t)^2\delta_t
  +2mMr_t(1+Mr_t)^2\\
&=(1+Mr_t)^2(\delta_t+2mMr_t),
    \end{split}
\end{align}
where the last inequality is due to the fact that $(1+a)^2-1\leq2a(1+a)^2$ holds for all $a\geq 0$.
Finally, Lemma~\ref{lm:QN_one_iter} gives
\begin{align*}
\mathbb E_t[\delta_{t+1}]
&\leq(1-\gamma)\sigma_{\H_{t+1}}(\widetilde{\G}_t)\overset{\eqref{eq:sigma_HttildeGt}}{\leq} (1-\gamma)(1+Mr_t)^2(\delta_t+2mMr_t),
\end{align*}
where $\gamma=\mu^2/(mL^2)$ for Option I and 
$\gamma=1/m$ for Option II.
This proves~equation \eqref{eq:delta-recur} with $\tilde c=1$ for Options I and II.

For Option III, Lemma~\ref{lm:QN_one_iter} indicates
\begin{align}
    \label{eq:tau_Ht+1_G}
    \begin{split}
\tau_{\H_{t+1}}(\widetilde{\G}_t)
&={\rm tr}\big(\widetilde{\G}_t-\H_{t+1}\big)\\
&=\tr{(1+Mr_t)\G_t-\H_{t+1}}\\
&\!\overset{\eqref{eq:Gram_bound_0}}{\leq}
\tr{(1+Mr_t)\G_t-\frac{1}{1+Mr_t}\H_t}\\
&=(1+Mr_t)\tau_{\H_t}(\G_t)
 +\left(1+Mr_t-\frac{1}{1+Mr_t}\right)\tr{\H_t}\\
&\overset{\text{($\dag$)}}{\leq}(1+Mr_t)\tau_{\H_t}(\G_t)+2Mr_t\tr{\H_t}\\
&\leq(1+Mr_t)
 \left(\frac{\tau_{\H_t}(\G_t)}{\tr{\H_t}}+2Mr_t\right)
 \tr{\H_t}\\
&\overset{\eqref{eq:Gram_bound_0}}{\leq}(1+Mr_t)^2
 \left(\frac{\tau_{\H_t}(\G_t)}{\tr{\H_t}}+2Mr_t\right)
 \tr{\H_{t+1}},
\end{split}
\end{align}
where the step ($\dag$) is due to the fact that $1+a-1/(1+a)
\leq2a$ holds for all $a\geq 0$.
Consequently, Lemma~\ref{lm:QN_one_iter} gives
\begin{align}\label{eq:tau_t+1bound}
\begin{split}    
\mathbb E_t[\tau_{\H_{t+1}}(\G_{t+1})] 
&\leq \left(1-\frac1m\right) 
\tau_{\H_{t+1}}(\widetilde{\G}_t) \\
&\!\overset{\eqref{eq:tau_Ht+1_G}}{\leq} 
\left(1-\frac1m\right)(1+Mr_t)^2
\left(\frac{\tau_{\H_t}(\G_t)}{\tr{\H_t}}+2Mr_t\right)
\tr{\H_{t+1}}.
\end{split}
\end{align}
According the choice of $\delta_t={mL^2\tau_{\H_t}(\G_t)}/(\mu^2\tr{\H_t})$ for Option III, we achieve
\begin{align*}
\mathbb E_t[\delta_{t+1}]
&=\frac{mL^2}{\mu^2\tr{\H_{t+1}}}
\mathbb E_t[\tau_{\H_{t+1}}(\G_{t+1})]\\
&\!\!\overset{\eqref{eq:tau_t+1bound}}{\leq}\frac{mL^2}{\mu^2\tr{\H_{t+1}}}
\left(1-\frac1m\right)(1+Mr_t)^2
\left(\frac{\tau_{\H_t}(\G_t)}{\tr{\H_t}}+2Mr_t\right)
\tr{\H_{t+1}}\\
&=\left(1-\frac1m\right)(1+Mr_t)^2
\left(\frac{mL^2\tau_{\H_t}(\G_t)}
{\mu^2\tr{\H_t}}+\frac{2mL^2Mr_t}{\mu^2}\right)\\
&=\left(1-\frac1m\right)(1+Mr_t)^2
\left(\delta_t+\frac{2mL^2Mr_t}{\mu^2}\right),
\end{align*}
which proves equation~\eqref{eq:delta-recur} with $\gamma=1/m$ and $\tilde c=L^2/\mu^2$ for Option III.

Finally, applying Lemma~\ref{lm:matrix_order_equivalence} in the view of
$\H_0\preceq\G_0\preceq\eta_0\H_0$ gives
\begin{align*}  
\mI_m\preceq\H_0^{-1/2}\G_0\H_0^{-1/2}\preceq\eta_0\mI_m,
\end{align*}
that is
\begin{align*}
\0
&\preceq\H_0^{-1/2}(\G_0-\H_0)\H_0^{-1/2}
\preceq(\eta_0-1)\I_m.
\end{align*}
Therefore, we achieve
\begin{align*}
\sigma_{\H_0}(\G_0)
={\rm tr}\big(\H_0^{-1/2}(\G_0-\H_0)\H_0^{-1/2}\big)
\leq m(\eta_0-1),
\end{align*}
and
\begin{align*}
\frac{mL^2\tau_{\H_0}(\G_0)}
{\mu^2\tr{\H_0}}
=\frac{mL^2\tr{\G_0-\H_0}}{\mu^2\tr{\H_0}}
\leq\frac{mL^2(\eta_0-1)\tr{\H_0}}{\mu^2\tr{\H_0}}
=\frac{mL^2(\eta_0-1)}{\mu^2}.
\end{align*}
\end{proof}

\subsection{Proof of Lemma~\ref{lm:linear-quadra_n<d}}
\label{app:proof_lemma_linear_quadratic_under}
\begin{proof}
Recall that for the case of $n<d$,
we have defined   
\begin{align*}
\H(\x)=\J(\x)\J(\x)^{\top}
\qquad\text{and}\qquad
\lambda_{\mathrm{under}}(\x) =\sqrt{\F(\x)^{\top}(\H(\x))^{-1}\F(\x)}.
\end{align*}
For all $\vx$ satisfies condition \eqref{eq:G_condi_2},
it holds
\begin{align*}
\|\F(\x)\|
&=\big\|\H(\x)^{1/2}\H(\x)^{-1/2}\F(\x)\big\|
\leq\big\|\H(\x)^{1/2}\big\|\lambda_{\mathrm{under}}(\x)
\\&=\Norm{\J(\x)}\lambda_{\mathrm{under}}(\x)\overset{\eqref{eq:lip}}{\leq}L\lambda_{\mathrm{under}}(\x)
\overset{\eqref{eq:G_condi_2}}{\leq}\frac{\mu^2}{2L_2},
\end{align*}
which implies 
\begin{align}\label{eq:x-in-under}
 \x\in\Omega_{\mathrm{under}}   
\end{align}
Moreover, the condition $\H(\x)\preceq\G$  gives
\begin{align}
\label{eq:boundr_lambda}
\|\x_{+}-\x\|^2
&\overset{\eqref{eq:qNupdate}}{=}
\F(\x)^{\top}\G^{-1}\H(\x)\G^{-1}\F(\x)\notag\leq\F(\x)^{\top}\G^{-1}\F(\x)\\
&\leq\F(\x)^{\top}\H(\x)^{-1}\F(\x)
 =(\lambda_{\mathrm{under}}(\x))^2.
\end{align}
On the other hand, it holds
\begin{align}
\label{eq:Fx_+Fx}
\begin{split}
&\F(\x_{+})\\
&=\F(\x)+\int_0^1
\J\bigl(\x+z(\x_{+}-\x)\bigr)(\x_{+}-\x)\,{\rm d}z\\
&=\F(\x)+\J(\x)(\x_{+}-\x)+\int_0^1\Bigl(\J\bigl(\x+z(\x_{+}-\x)\bigr)-\J(\x)\Bigr)
(\x_{+}-\x)\,{\rm d}z\\
&\overset{\eqref{eq:qNupdate}}{=}
\F(\x)-\J(\x)\J(\x)^{\top}\G^{-1}\F(\x)+\int_0^1\Bigl(\J\bigl(\x+z(\x_{+}-\x)\bigr)-\J(\x)\Bigr)
(\x_{+}-\x)\,{\rm d}z\\
&\overset{\eqref{eq:H-Gram}}{=}
(\I_n-\H(\x)\G^{-1})\F(\x)+\R_F,
\end{split}
\end{align}
where we define
\begin{align*}
\R_F
\triangleq\int_0^1\Bigl(\J\bigl(\x+z(\x_{+}-\x)\bigr)-\J(\x)\Bigr)
(\x_{+}-\x)\,{\rm d}z.    
\end{align*}
We also have
\begin{align}
\label{eq:R_Fnormbound}
\begin{split}
\|\R_F\|
&\leq\int_0^1
\Norm{\J\bigl(\x+z(\x_{+}-\x)\bigr)-\J(\x)}
\|\x_{+}-\x\|\,{\rm d}z\\
&\overset{\eqref{eq:lip}}{\leq}
L_2\|\x_{+}-\x\|^2\int_0^1z\,{\rm d}z
=\frac{L_2}{2}\|\x_{+}-\x\|^2
\overset{\eqref{eq:boundr_lambda}}{\leq}
\frac{L_2}{2}(\lambda_{\mathrm{under}}(\x))^2.
\end{split}
\end{align}
The expansion of $\F(\x_{+})$ gives
\begin{align}
\label{eq:HFx+bound}
\H(\x)^{-1/2}\F(\x_{+})
&\overset{\eqref{eq:Fx_+Fx}}{=}\bigl(\I_n-\H(\x)^{1/2}\G^{-1}\H(\x)^{1/2}\bigr)
\H(\x)^{-1/2}\F(\x)+\H(\x)^{-1/2}\R_F.
\end{align}
The result \eqref{eq:x-in-under}  and Proposition~\ref{prop:gram_lower} indicate
\begin{align}\label{eq:H12mu}
\big\|\H(\x)^{-1/2}\big\|\leq\frac{1}{\mu}.    
\end{align}
Taking inverses in~\eqref{eq:G_condi_2} gives
\begin{align*}
\frac{1}{\eta}\H(\x)^{-1}
&\preceq\G^{-1}\preceq\H(\x)^{-1}.
\end{align*}
Applying Lemma~\ref{lm:matrix_order_equivalence} to above inequality with
$\A=\H(\x)^{-1}$ and $\B=\G^{-1}$ gives
\begin{align*}
\frac{1}{\eta}\I_n
&\preceq\H(\x)^{1/2}\G^{-1}\H(\x)^{1/2}\preceq\I_n.
\end{align*}
Therefore, we have
\begin{align*}
\0
&\preceq\I_n-\H(\x)^{1/2}\G^{-1}\H(\x)^{1/2}
\preceq\left(1-\frac{1}{\eta}\right)\I_n,
\end{align*}
which implies
\begin{align}
\label{eq:normI-HGH}
\Norm{\I_n-\H(\x)^{1/2}\G^{-1}\H(\x)^{1/2}}
&\leq1-\frac{1}{\eta}.
\end{align}
Combining these bounds with the expansion of $\F(\x_{+})$ gives
\begin{align}
\label{eq:HFx+}
\Norm{\H(\x)^{-1/2}\F(\x_{+})}
&\!\!\overset{\eqref{eq:HFx+bound}}{\leq}\Norm{\I_n-\H(\x)^{1/2}\G^{-1}\H(\x)^{1/2}}
\Norm{\H(\x)^{-1/2}\F(\x)}
+\Norm{\H(\x)^{-1/2}}\|\R_F\|\notag\\
&\!\!\overset{\eqref{eq:H12mu}}\leq\Norm{\I_n-\H(\x)^{1/2}\G^{-1}\H(\x)^{1/2}}
\lambda_{\mathrm{under}}(\x)+\frac{\|\R_F\|}{\mu}\notag\\
&\!\!\!\!\!\!\overset{\eqref{eq:normI-HGH},\,\eqref{eq:R_Fnormbound}
}{\leq}\left(1-\frac{1}{\eta}\right)\lambda_{\mathrm{under}}(\x)
  +\frac{L_2}{2\mu}(\lambda_{\mathrm{under}}(\x))^2
\end{align}
and
\begin{align*}
\|\F(\x_{+})\|
&=\Norm{\H(\x)^{1/2}\H(\x)^{-1/2}\F(\x_{+})}\\
&\leq L\Norm{\H(\x)^{-1/2}\F(\x_{+})}\\
&\overset{\eqref{eq:HFx+}}{\leq}
L\lambda_{\mathrm{under}}(\x)
+\frac{LL_2}{2\mu}(\lambda_{\mathrm{under}}(\x))^2
\overset{\eqref{eq:G_condi_2}}{\leq}
\frac{\mu^2}{4L_2}+\frac{\mu^3}{32LL_2}
\leq\frac{\mu^2}{2L_2},
\end{align*}
where the first inequality is based on Proposition \ref{prop:gram_lip}.
Thus, both $\x$ and $\x_{+}$ belong to $\Omega_{\mathrm{under}}$.

Furthermore, the definition of $\lambda_{\mathrm{under}}(\cdot)$ gives
\begin{align*}
\lambda_{\mathrm{under}}(\x_{+})
&=\sqrt{\F(\x_{+})^{\top}\H(\x_{+})^{-1}\F(\x_{+})}\\
&\overset{\eqref{eq:Gram_bound_0}}{\leq}\sqrt{1+M\|\x_{+}-\x\|}
\sqrt{\F(\x_{+})^{\top}\H(\x)^{-1}\F(\x_{+})} \\
&=
\sqrt{1+M\|\x_{+}-\x\|} \,\big\|\H(\x)^{-1/2}\F(\x_{+})\big\|\\
&\!\!\!\!\!\!\!\overset{\eqref{eq:boundr_lambda},\,\eqref{eq:HFx+}}{\leq}
\sqrt{1+M\lambda_{\mathrm{under}}(\x)}\,
 \left(\left(1-\frac{1}{\eta}\right)\lambda_{\mathrm{under}}(\x)
 +\frac{L_2}{2\mu}(\lambda_{\mathrm{under}}(\x))^2\right)\\
&\leq(1+M\lambda_{\mathrm{under}}(\x))
 \left(\left(1-\frac{1}{\eta}\right)\lambda_{\mathrm{under}}(\x)
 +\frac{L_2}{2\mu}(\lambda_{\mathrm{under}}(\x))^2\right)\\
&=\left(1-\frac{1}{\eta}\right)\lambda_{\mathrm{under}}(\x)
+\left(\left(1-\frac{1}{\eta}\right)M+\frac{L_2}{2\mu}
+\frac{ML_2}{2\mu}\lambda_{\mathrm{under}}(\x)\right)
(\lambda_{\mathrm{under}}(\x))^2\\
&\!\!\overset{\eqref{eq:G_condi_2}}{\leq}\!\!
\left(1-\frac{1}{\eta}\right)\lambda_{\mathrm{under}}(\x)
+\left(\frac{2LL_2}{\mu^2}+\frac{L_2}{2\mu}
+\frac{L_2}{4\mu}\right)(\lambda_{\mathrm{under}}(\x))^2\\
&\leq\left(1-\frac{1}{\eta}\right)\lambda_{\mathrm{under}}(\x)
 +\frac{3LL_2}{\mu^2}(\lambda_{\mathrm{under}}(\x))^2,
\end{align*}
where $M=2LL_2/\mu^2$ follows Proposition \ref{prop:Gram_concor}.
This finishes the proof.
\end{proof}

\subsection{Proof of Theorem~\ref{thm:linear_under}}
\label{app:proof_theorem_linear_under}
\begin{proof}
For the ease of presentation, we denote
\begin{align*}
\lambda_{\mathrm{under},t}\triangleq\lambda_{\mathrm{under}}(\x_t).    
\end{align*}
We use induction to prove  $\x_t\in\Omega_{\mathrm{under}}$ and equation \eqref{eq:induc_lambda_n<d} holds for all $t\geq 0$.
For the induction base, the initialization in Line~\ref{line:input} of
Algorithm~\ref{alg:RQGN} and the definition of $\eta_0$ in equation \eqref{eq:dfn-eta0} give~$\H_0\preceq\G_0\preceq\eta_0\H_0$, which proves \eqref{eq:induc_lambda_n<d} in the case of $t=0$.
Additionally, condition~\eqref{eq:linear_initial_condi_2} gives
\begin{align*}
\lambda_{\mathrm{under},0}
&\leq\frac{\mu^2}{20\eta_0LL_2}
\leq\frac{\mu^2}{4LL_2}.
\end{align*}
Thus, applying Lemma~\ref{lm:linear-quadra_n<d} with $\eta=\eta_0$ gives $\x_0\in\Omega_{\mathrm{under}}$. 

For the induction step, we suppose  $\x_t\in\Omega_{\mathrm{under}}$ and equation~\eqref{eq:induc_lambda_n<d} hold for all $t\leq \hat t$.
Since the induction hypothesis implies $\lambda_{\mathrm{under},\hat t}\leq\lambda_{\mathrm{under},0}$, Lemma~\ref{lm:linear-quadra_n<d} implies 
\begin{align}\label{eq:r_hat_t_bound}
\x_{\hat t+1}\in\Omega_{\mathrm{under}}
\qquad\text{and}\qquad
r_{\hat t}\leq \lambda_{\mathrm{under},\hat t}
\end{align}
Furthermore, applying Proposition~\ref{prop:Gram_concor} to $\x_{\hat t}$ and
$\x_{\hat t+1}$ gives
\begin{align*}
\H_{\hat t+1}
&\overset{\eqref{eq:Gram_bound_0}}{\preceq}
(1+Mr_{\hat t})\H_{\hat t}
\overset{\eqref{eq:induc_lambda_n<d}}{\preceq}
(1+Mr_{\hat t})\G_{\hat t}
=\widetilde{\G}_{\hat t}
\end{align*}
and
\begin{align*}
\widetilde{\G}_{\hat t}
&=(1+Mr_{\hat t})\G_{\hat t}\\
&\overset{\eqref{eq:induc_lambda_n<d}}{\preceq}
\eta_0(1+Mr_{\hat t})
\exp\biggl(2\sum_{i=0}^{\hat t-1}Mr_i\biggr)\H_{\hat t}\\
&\overset{\eqref{eq:Gram_bound_0}}{\preceq}
\eta_0(1+Mr_{\hat t})^2
\exp\biggl(2\sum_{i=0}^{\hat t-1}Mr_i\biggr)\H_{\hat t+1}\\
&\preceq\eta_0\exp\biggl(2\sum_{i=0}^{\hat t}Mr_i\biggr)
\H_{\hat t+1},
\end{align*}
where the last step is based on the fact $1+x\leq {\rm e}^{x}$ with $x=Mr_{\hat{t}}$.
The above two results imply we can apply Lemma~\ref{lm:QN_order} with $\mG=\widetilde{\G}_{\hat t}$, $\mH=\H_{\hat t+1}$, and $\eta=\eta_0\exp\big(2\sum_{i=0}^{\hat t}Mr_i\big)$ to~achieve
\begin{align}
    \label{eq:G_tH_tleq2}
\H_{{\hat t}+1} &\preceq\G_{{\hat t}+1}
 \preceq\eta_0\exp\biggl(2\sum_{i=0}^{\hat t}Mr_i\biggr)\H_{{\hat t}+1}.
\end{align}
We also have
\begin{align}
    \label{eq:sum_r_t_bound}
\sum_{i=0}^{t-1}r_i
&\overset{\eqref{eq:r_hat_t_bound}}{\leq}
\sum_{i=0}^{t-1}\lambda_{\mathrm{under},i}
\overset{\eqref{eq:induc_lambda_n<d}}{\leq}
 \sum_{i=0}^{t-1}\biggl(1-\frac{1}{2\eta_0}\biggr)^i
 \lambda_{\mathrm{under},0}
 \leq2\eta_0\lambda_{\mathrm{under},0}
 \overset{\eqref{eq:linear_initial_condi_2}}{\leq}\frac{1}{5M},
\end{align}
where $M=2LL_2/\mu^2$.
Therefore, it holds
    \begin{align*}
\H_{{\hat t}+1}
&\preceq\G_{{\hat t}+1}
\overset{\eqref{eq:G_tH_tleq2}}{\preceq}
\eta_0\exp\biggl(2M\sum_{i=0}^{\hat t}r_i\biggr)\H_{\hat t+1}
\overset{\eqref{eq:sum_r_t_bound}}{\preceq}
\eta_0{\rm e}^{2/5}\H_{\hat t+1}
\preceq\frac{3\eta_0}{2}\H_{\hat t+1}.
\end{align*}
Additionally, Lemma~\ref{lm:linear-quadra_n<d} implies
\begin{align*}
\lambda_{\mathrm{under},\hat t+1}
&\overset{\eqref{eq:linear-quadra-n<d}}{\leq}
\biggl(1-\frac{2}{3\eta_0}
+\frac{3LL_2}{\mu^2}\lambda_{\mathrm{under},\hat t}\biggr)
\lambda_{\mathrm{under},\hat t}\overset{\eqref{eq:induc_lambda_n<d},\,\eqref{eq:linear_initial_condi_2}}{\leq}
\biggl(1-\frac{2}{3\eta_0}+\frac{3}{20\eta_0}\biggr)
\lambda_{\mathrm{under},\hat t}\\
&~\leq\biggl(1-\frac{1}{2\eta_0}\biggr)
\lambda_{\mathrm{under},\hat t}
\overset{\eqref{eq:induc_lambda_n<d}}{\leq}
\biggl(1-\frac{1}{2\eta_0}\biggr)^{\hat t+1}
\lambda_{\mathrm{under},0}.
\end{align*}
This completes the induction.
\end{proof}

\subsection{Proof of Theorem~\ref{thm:n<dsuperlinear}}
\label{app:proof_theorem_superlinaer_under}

\begin{proof} 
We follow the notation $\lambda_{\mathrm{under},t}=\lambda_{\mathrm{under}}(\x_t)$ defined in Appendix
\ref{app:proof_theorem_linear_under}.
Recall that the definitions of $\tilde c$ and $\gamma$ in Lemma \ref{lm:Gram_approximate} satisfy
 $\tilde c n\geq1$ and $1-\gamma<1$, then
 condition~\eqref{eq:initial_condi_n<d} implies equation \eqref{eq:linear_initial_condi_2} holds.
Thus, Theorem~\ref{thm:linear_under} holds naturally that we have $\x_t\in\Omega_{\mathrm{under}}$ and
\begin{align}
\label{eq:thm_linear_condi_imply}
    \lambda_{\mathrm{under},t}\leq \left(1-\frac{1}{2\eta_0}\right)^{t}\lambda_{\mathrm{under},0}\leq \lambda_{\mathrm{under},0}\leq \frac{\mu^2}{4LL_2}
\end{align}
Therefore, Lemma~\ref{lm:Gram_approximate} gives
\begin{align}\label{eq:HGdelta}
    \H_t\preceq \G_t\preceq (1+\delta_t)\H_t.
\end{align}
Hence, we apply Lemma~\ref{lm:linear-quadra_n<d} with $\vx = \vx_t$, $\vx_+=\vx_{t+1}$, and $\eta=1+\delta_t$ to achive
\begin{align}
    \label{eq:r_tboundedbylambda_t}
r_t=\norm{\vx_{t+1}-\vx_t}\leq\lambda_{\mathrm{under},t}
\end{align}
and
\begin{align}
\label{eq:exp_lambda_n<d}
\lambda_{\mathrm{under},t+1}
&\!\!\overset{\eqref{eq:linear-quadra-n<d}}{\leq}
\biggl(1-\frac{1}{1+\delta_t}\biggr)\lambda_{\mathrm{under},t}
+\frac{3LL_2}{\mu^2}(\lambda_{\mathrm{under},t})^2\notag\\
& \leq\delta_t\lambda_{\mathrm{under},t}
+\frac{3LL_2}{\mu^2}(\lambda_{\mathrm{under},t})^2.
\end{align}
According to Lemma~\ref{lm:Gram_approximate} with $m=n$, it holds
\begin{align}
\label{eq:exp_delta_n<d}
\begin{split}    
\mathbb E_t[\delta_{t+1}]
&\!\!\overset{\eqref{eq:delta-recur}}{\leq}
(1-\gamma)(1+Mr_t)^2(\delta_t+2\tilde c nMr_t) \\
&\!\!\overset{\eqref{eq:r_tboundedbylambda_t}}{\leq}
(1-\gamma)(1+M\lambda_{\mathrm{under},t})^2
 (\delta_t+2\tilde c nM\lambda_{\mathrm{under},t}).
 \end{split}
\end{align}
Let
\begin{align}\label{eq:dfn-xi-t}
\xi_t=\delta_t + \frac{7\tilde c nLL_2\lambda_{\mathrm{under},t}}{\mu^2}.    
\end{align}
Recall that we have $M=2LL_2/\mu^2$ and $\tilde c n\geq1$, which implies
$2\tilde c nM\leq7\tilde c nLL_2/\mu^2$ and~$3LL_2/\mu^2\leq7\tilde c nLL_2/\mu^2$.
This leads to
\begin{align}
    \label{eq:xi_t_lower_bound}
\delta_t+2\tilde{c}nM\lambda_{\mathrm{under},t}\leq\xi_t\qquad\text{and}\qquad\delta_t+ \frac{3LL_2}{\mu^2}\lambda_{\mathrm{under},t}\leq\xi_t.
\end{align}
Combining equations~\eqref{eq:exp_lambda_n<d} and \eqref{eq:exp_delta_n<d}, we have
\begingroup
\small
\begin{align}
\label{eq:xi_t_one_iter_n<d}
\begin{split}    
&\mathbb E_t[\xi_{t+1}]
\\&\overset{\eqref{eq:dfn-xi-t}}{=}
\BE_t\Bigl[\delta_{t+1}
+\frac{7\tilde c nLL_2}{\mu^2}\lambda_{\mathrm{under},t+1}\Bigr]\\
&\!\!\!\!\!\overset{\eqref{eq:exp_delta_n<d},\,\eqref{eq:exp_lambda_n<d}}{\leq}
(1-\gamma)(1+M\lambda_{\mathrm{under},t})^2
\bigl(\delta_t+2\tilde{c}nM\lambda_{\mathrm{under},t}\bigr)+\frac{7\tilde c nLL_2}{\mu^2}
\left(\delta_t+\frac{3LL_2}{\mu^2}\lambda_{\mathrm{under},t}\right)
\lambda_{\mathrm{under},t}\\
&\overset{\eqref{eq:xi_t_lower_bound}}{\leq}
(1-\gamma)(1+M\lambda_{\mathrm{under},t})^2\xi_t
 +\frac{7\tilde c nLL_2}{\mu^2}\lambda_{\mathrm{under},t}\,\xi_t \\
&~\leq(1-\gamma)\biggl((1+M\lambda_{\mathrm{under},t})^2
+\frac{7\tilde c nLL_2}{(1-\gamma)\mu^2}
\lambda_{\mathrm{under},t}\biggr)\xi_t\\
&~\leq(1-\gamma)
\exp\Biggl(\biggl(2M+\frac{7\tilde c nLL_2}
{(1-\gamma)\mu^2}\biggr)\lambda_{\mathrm{under},t}\Biggr)\xi_t \\
&~\leq(1-\gamma)
 \exp\biggl(\frac{11\tilde c nLL_2}{(1-\gamma)\mu^2}\lambda_{\mathrm{under},t}\biggr)\xi_t,
\end{split}
\end{align}
\endgroup
where we use the fact $(1+a)^2+b\leq (1+a)^2(1+b)\leq \exp(2a+b)$ for all $a,b\geq 0$ by taking~$a = (1+M\lambda_{\mathrm{under},t})\geq 0$ and $b = 7\tilde{c}nLL_2\lambda_{\mathrm{under},t}/((1-\gamma)\mu^2)\geq 0$ in the second last inequality and the definition $M=2LL_2/\mu^2\leq 2\tilde{c}n/((1-\gamma)\mu^2)$ in the last inequality.

Taking expectation on~\eqref{eq:xi_t_one_iter_n<d} and applying equation \eqref{eq:thm_linear_condi_imply} gives
\begin{align}
\label{eq:recur_xi_t_n<d}
\EBP{\xi_{t+1}}
&\overset{\eqref{eq:xi_t_one_iter_n<d},\, \eqref{eq:thm_linear_condi_imply}}{\leq}
(1-\gamma)
 \exp\Biggl(\frac{11\tilde c nLL_2}{(1-\gamma)\mu^2}
 \biggl(1-\frac{1}{2\eta_0}\biggr)^t\lambda_{\mathrm{under},0}\Biggr)
 \EBP{\xi_t}.
\end{align}
Recall that the geometric series satisfies
\begin{align}
\label{eq:geosum_boun}
    \sum_{i=0}^{t-1}\biggl(1-\frac{1}{2\eta_0}\biggr)^i
&\leq\sum_{i=0}^{\infty}\biggl(1-\frac{1}{2\eta_0}\biggr)^i
=2\eta_0.
\end{align}
Hence, the recursion~\eqref{eq:recur_xi_t_n<d} gives
\begin{align}
\label{eq:xi_t_bound_n<d}
\EBP{\xi_t}
&\overset{\eqref{eq:recur_xi_t_n<d}}{\leq}(1-\gamma)^t
 \prod_{i=0}^{t-1}
 \exp\Biggl(\frac{11\tilde c nLL_2}{(1-\gamma)\mu^2}
 \biggl(1-\frac{1}{2\eta_0}\biggr)^i
 \lambda_{\mathrm{under},0}\Biggr)\xi_0\notag\\
&=(1-\gamma)^t
 \exp\Biggl(\frac{11\tilde c nLL_2}{(1-\gamma)\mu^2}
 \lambda_{\mathrm{under},0}
 \sum_{i=0}^{t-1}\biggl(1-\frac{1}{2\eta_0}\biggr)^i\Biggr)
 \xi_0\notag\\
&\overset{\eqref{eq:geosum_boun}}{\leq}(1-\gamma)^t
 \exp\biggl(\frac{22\tilde c nLL_2\eta_0}{(1-\gamma)\mu^2}\lambda_{\mathrm{under},0}\biggr)
 \xi_0\notag\\
&\overset{\eqref{eq:initial_condi_n<d}}{\leq}
(1-\gamma)^t {\rm e}^{11/10}
 \biggl(\delta_0+\frac{7(1-\gamma)}{20\eta_0}\biggr)
 =(1-\gamma)^tq_{\text{\rm under}}.
\end{align}
Additionally, the definition of $\xi_t$ gives
\begin{align}\label{eq:lambda-ratio}
\frac{\lambda_{\mathrm{under},t+1}}{\lambda_{\mathrm{under},t}}
\overset{\eqref{eq:exp_lambda_n<d}}{\leq}\delta_t+\frac{3LL_2\lambda_{\mathrm{under},t}}{\mu^2}\leq\delta_t+\frac{7\tilde c nLL_2\lambda_{\mathrm{under},t}}{\mu^2}
=\xi_t,
\end{align}
The definition of $\eta_t$ in equation \eqref{eq:def_eta} and equation \eqref{eq:HGdelta} indicates 
\begin{align}\label{eq:eta-xi}
\eta_t\leq\delta_t+1\leq\xi_t+1,     
\end{align}
where the last step is based on equation \eqref{eq:lambda-ratio}.
Therefore, we have
\begin{align}\label{eq:lambda_{t+1}/lambda_t}
\EBP{\eta_t-1}
\overset{\eqref{eq:eta-xi}}\leq\EBP{\xi_t}
\overset{\eqref{eq:xi_t_bound_n<d}}{\leq}
(1-\gamma)^tq_{\text{\rm under}}
~~\text{and}~~
\EBP{\frac{\lambda_{\mathrm{under},t+1}}{\lambda_{\mathrm{under},t}}}\leq\EBP{\xi_t}
\overset{\eqref{eq:xi_t_bound_n<d}}{\leq}
(1-\gamma)^tq_{\text{\rm under}}.
\end{align}
Combing equation \eqref{eq:lambda_{t+1}/lambda_t} and Lemma \ref{lm:lin_lemma26} with
$X_t=\lambda_{\mathrm{under},t+1}/\lambda_{\mathrm{under},t}$,
$a=q_{\text{\rm under}}$, and $\tau=1/\gamma>1$, with probability at least $1-p$, we have
\begin{align}
\label{eq:lambda_t+1-recurn<d}
\frac{\lambda_{\mathrm{under},t+1}}{\lambda_{\mathrm{under},t}}
&\leq \biggl(1-\frac{\gamma}{1+\gamma}\biggr)^t\frac{q_{\text{\rm under}}}{p\gamma^2}
\end{align}
simultaneously for every $t\geq0$.
Recall that $\gamma\in(0,1)$ and $\log(1+\gamma)\geq\gamma/2$, then taking
\begin{align*}
t_0
=\biggl\lceil
\frac{\log\bigl(1+2q_{\text{\rm under}}/(p\gamma^2)\bigr)}{\log(1+\gamma)}
\biggr\rceil=\OM\biggl(\frac{\log\bigl(1+q_{\text{\rm under}}/(p\gamma^2)\bigr)}{\gamma}\biggr)
\end{align*}
gives
\begin{align}\label{eq:superlinearleq1/2n<d}
\biggl(1-\frac{\gamma}{1+\gamma}\biggr)^{t_0}\frac{q_{\text{\rm under}}}{p\gamma^2}
&\leq\frac{1}{2}.
\end{align}
Therefore, for all $t\geq1$, we have
\begin{align*}
\lambda_{\mathrm{under},t_0+t}
&=\lambda_{\mathrm{under},t_0}
\prod_{i=0}^{t-1}
\frac{\lambda_{\mathrm{under},t_0+i+1}}
{\lambda_{\mathrm{under},t_0+i}}\\
&\!\!\!\!\!\!\!\overset{\eqref{eq:lambda_t+1-recurn<d},\, \eqref{eq:superlinearleq1/2n<d}}{\leq}\lambda_{\mathrm{under},t_0}
\prod_{i=0}^{t-1}
\biggl(\frac{1}{2}
\biggl(1-\frac{\gamma}{1+\gamma}\biggr)^i\biggr)\\
&=\biggl(1-\frac{\gamma}{1+\gamma}\biggr)^{\sum_{i=0}^{t-1}i}
\biggl(\frac{1}{2}\biggr)^t\lambda_{\mathrm{under},t_0}\\
&=\biggl(1-\frac{\gamma}{1+\gamma}\biggr)^{t(t-1)/2}
\biggl(\frac{1}{2}\biggr)^t\lambda_{\mathrm{under},t_0}\\
&\!\!\overset{\eqref{eq:induc_lambda_n<d}}{\leq}
\biggl(1-\frac{\gamma}{1+\gamma}\biggr)^{t(t-1)/2}
 \biggl(\frac{1}{2}\biggr)^t
 \biggl(1-\frac{1}{2\eta_0}\biggr)^{t_0}\lambda_{\mathrm{under},0}
\end{align*}
holds with probability at least $1-p$.
\end{proof}

\subsection{Proof of Proposition~\ref{prop:local_nonsingularity}}
\label{app:proof_prop_local_nonsingularity}
\begin{proof}
For all $\x\in\Omega_{\mathrm{over}}$, the definition of
$\Omega_{\mathrm{over}}$ and Assumption~\ref{ass:nonsingular_J^*} indicate
\begin{align}
    \label{eq:local_nonsingularity_condi}
\|\x-\x_*\|
&\leq\frac{\mu_*^2}{4LL_2}
\qquad\text{and}\qquad
\H(\x_*)=\J(\x_*)^{\top}\J(\x_*)\succeq\mu_*^2\I_d.
\end{align}
Since Proposition~\ref{prop:gram_lip} gives $\H(\cdot)$ is $2LL_2$-Lipschitz continuous, it holds
\begin{align*}
\H(\x)
\succeq\H(\x_*)-2LL_2\|\x-\x_*\|\I_d\overset{\eqref{eq:local_nonsingularity_condi}}{\succeq}\mu_*^2\I_d-2LL_2\cdot\frac{\mu_*^2}{4LL_2}\I_d
=\frac{\mu_*^2}{2}\I_d.
\end{align*}
Thus, Assumption~\ref{ass:nonsingular} holds with $\Omega=\Omega_{\mathrm{over}}$ and
$\mu=\mu_*/\sqrt{2}$.
\end{proof}

\subsection{Proof of Proposition~\ref{prop:assumption-sati}}
\label{app:proof_prop}
\begin{proof}
We consider three conditions in this proposition as follows.
\paragraph{Case 1:}
 For all $\vx\in\BR^d$ such that $\|\x-\x_*\|\leq\delta$,  Proposition~\ref{prop:gram_lip} gives
\begin{align}
    \label{eq:F_i_bound}
    \begin{split}
\sum_{i=1}^n|F_i(\x)|
&\leq\|\F(\x_*)\|_1+\|\F(\x)-\F(\x_*)\|_1\\
&\leq\|\F(\x_*)\|_1+\sqrt n\|\F(\x)-\F(\x_*)\|\\
&\overset{\eqref{eq:lip}}{\leq}\|\F(\x_*)\|_1+\sqrt nL\delta.
    \end{split}
\end{align}
Using the fact $\nabla^2\phi(\x)=\H(\x)+\sum_{i=1}^nF_i(\x)\nabla^2F_i(\x)$, we achieve
\begin{align*}
&\nabla^2\phi(\x)-\nabla^2\phi(\x_*)\\
&=\H(\x)-\H(\x_*)
+\sum_{i=1}^n\bigl(F_i(\x)-F_i(\x_*)\bigr)\nabla^2F_i(\x_*)+\sum_{i=1}^nF_i(\x)
\bigl(\nabla^2F_i(\x)-\nabla^2F_i(\x_*)\bigr)
\end{align*}
which implies
\begin{align}
\label{eq:nabla_2phi_bound}
\begin{split}    
&\|\nabla^2\phi(\x)-\nabla^2\phi(\x_*)\|\\
&\leq\|\H(\x)-\H(\x_*)\|
+\sum_{i=1}^n|F_i(\x)-F_i(\x_*)|\,\|\nabla^2F_i(\x_*)\| \\
&\quad+\sum_{i=1}^n|F_i(\x)|\,
\|\nabla^2F_i(\x)-\nabla^2F_i(\x_*)\|\\
&\overset{\eqref{eq:lip}}{\leq}\left(2LL_2+L\sum_{i=1}^n\|\nabla^2F_i(\x_*)\|
+L_3\sum_{i=1}^n|F_i(\x)|\right)\|\x-\x_*\|\\
&\overset{\eqref{eq:F_i_bound}}{\leq}\left(2LL_2+L\sum_{i=1}^n\|\nabla^2F_i(\x_*)\|
+L_3\sqrt nL\delta+L_3\|\F(\x_*)\|_1\right)\|\x-\x_*\|\\
&=2\rho\|\x-\x_*\|,
\end{split}
\end{align}
where $\rho$ follows equation \eqref{eq:rho-cond-1}.
Here, the second inequality also uses the facts that the Gram matrix $\H(\cdot)$ is $2LL_2$-Lipschitz continuous from Proposition \ref{prop:gram_lip} and the condition that each~$\nabla^2F_i(\cdot)$ is $L_3$-Lipschitz continuous. 
Since we have $\nabla\phi(\x_*)=\J(\x_*)^{\top}\F(\x_*)=\0$, it holds
\begin{align}
\label{eq:sati-1}
\begin{split}
&\Norm{\J(\x)^{\top}\F(\x)-(\mH(\x_*)+\mQ(\x_*))(\x-\x_*)} \\
&= \Norm{\nabla\phi(\vx)-\nabla\phi(\vx^*)-\nabla^2\phi(\x^*)(\x-\x^*)}\\
&=\Norm{\int_{0}^{1}
\bigl(\nabla^2\phi(\x_*+s(\x-\x_*))-\nabla^2\phi(\x_*)\bigr)
(\x-\x_*)\,{\rm d}s}\\
&\!\!\overset{\eqref{eq:nabla_2phi_bound}}{\leq}\int_0^1 2\rho s\|\x-\x_*\|^2\,{\rm d}s
=\rho\|\x-\x_*\|^2,
\end{split}
\end{align}
which verifies Assumption~\ref{ass:x_star_philip}.

\paragraph{Case 2:} Condition \eqref{eq:phi-2-Lip-xy} implies that 
\begin{align*}
\|\nabla^2\phi(\x)-\nabla^2\phi(\x_*)\|
\leq\rho_0\|\x-\x_*\|    
\end{align*}
holds for all $\vx\in\BR^d$, where $\vx^*\in\BR^d$ follows the definition in Assumption \ref{ass:x_star_philip}.
In the view of equation \eqref{eq:nabla_2phi_bound}, we can follow the derivation of equation \eqref{eq:sati-1} by replacing $\rho$ with $\rho_0/2$ to verify 
Assumption~\ref{ass:x_star_philip} with $\rho=\rho_0/2$.

\paragraph{Case 3:} 
Note that the condition $\F(\x_*)=\0$ implies $\vx^*$ is a stationary point of $\phi(\cdot)$. 
Consequently, the form of
$\nabla^2\phi(\cdot)$ gives
\begin{align}
    \label{eq:zerox_*_property}
\nabla^2\phi(\x_*)=\H(\x_*)+\sum_{i=1}^nF_i(\x_*)\nabla^2F_i(\x_*)=\H(\x_*)=\J(\x_*)^{\top}\J(\x_*).
\end{align}
Moreover, the fundamental theorem of calculus gives
\begin{align}
    \label{eq:F_diff_bound}
    \begin{split}
&\Norm{\F(\x)-\F(\x_*)-\J(\x_*)(\x-\x_*)}\\
&=\Norm{\int_0^1
\bigl(\J(\x_*+s(\x-\x_*))-\J(\x_*)\bigr)
(\x-\x_*)\,{\rm d}s}\\
&\overset{\eqref{eq:lip}}{\leq}
\int_0^1L_2s\|\x-\x_*\|^2\,{\rm d}s
=\frac{L_2}{2}\|\x-\x_*\|^2.
    \end{split}
\end{align}
Proposition~\ref{prop:gram_lip} also gives
\begin{align}
    \label{eq:normFxbounded}
\|\F(\x)\|
&=\|\F(\x)-\F(\x_*)\|
\leq L\|\x-\x_*\|.
\end{align}
Thus, it holds
\begin{align*}
&\big\|\J(\x)^{\top}\F(\x)-\nabla^2\phi(\x_*)((\x-\x_*)\big\|\\
&\overset{\eqref{eq:zerox_*_property}}{=}\big\|\J(\x)^{\top}\F(\x)-\J(\x_*)^{\top}\F(\x_*)
-\H(\x_*)(\x-\x_*)\big\|\\
&~=\big\|\big(\J(\x)^{\top}-\J(\x_*)^{\top}\big)\F(\x)
+\J(\x_*)^{\top}
(\F(\x)-\F(\x_*)-\J(\x_*)(\x-\x_*))\big\|\\
&~\leq\big\|\J(\x)^{\top}-\J(\x_*)^{\top}\big\|\,\|\F(\x)\|
+\|\J(\x_*)\|\,
\|\F(\x)-\F(\x_*)-\J(\x_*)(\x-\x_*)\|\\
&\!\!\!\overset{\eqref{eq:lip},\eqref{eq:normFxbounded}}{\leq} LL_2\|\x-\x_*\|^2+ \|\J(\x_*)\|\,
\|\F(\x)-\F(\x_*)-\J(\x_*)(\x-\x_*)\|\\
&\!\!\!\overset{\eqref{eq:lip},\eqref{eq:F_diff_bound}}{\leq}
\left(LL_2+\frac{LL_2}{2}\right)\|\x-\x_*\|^2
=\frac{3LL_2}{2}\|\x-\x_*\|^2,
\end{align*}
which verifies Assumption~\ref{ass:x_star_philip} with $\rho = {3LL_2}/{2}$.
\end{proof}

\subsection{Proof of Lemma~\ref{lm:pd_approximate_phi}}
\label{app:proof_lemma_pd_approximate_phi}
\begin{proof}
We first prove equation~\eqref{eq:approx_phi_positive}.
The condition $c\in(1,1/\theta)$ indicates $(c-1)/c<1$, 
which implies for all $\vx\in\BR^d$ such that $\|\x-\x_*\|\leq(c-1)\mu_*^2/(4cLL_2)$, it holds
\begin{align*}
\x\in\Omega_{\mathrm{over}}.     
\end{align*}
Since matrix $\Q(\x_*)$ is symmetric and Assumption~\ref{ass:nonsingular_J^*} gives $\H(\x_*)\succ\0$, 
it holds 
\begin{align*}
\big|\v^{\top}\H(\x_*)^{-1/2}\Q(\x_*)
\H(\x_*)^{-1/2}\v\big|
&\leq\big\|\H(\x_*)^{-1/2}\Q(\x_*)\H(\x_*)^{-1/2}\big\|
\|\v\|^2\overset{\eqref{eq:condiDelta}}=\theta\|\v\|^2.
\end{align*}
This implies
\begin{align*}
-\theta\I_d
&\preceq\H(\x_*)^{-1/2}\Q(\x_*)\H(\x_*)^{-1/2}
\preceq\theta\I_d.
\end{align*}
Combining the above result and Lemma~\ref{lm:matrix_order_equivalence} with
$\A=\H(\x_*)$, $\B=\Q(\x_*)$, $a_1=-\theta$, and~$a_2=\theta$, we achieve
\begin{align}
    \label{eq:Qx_star_bound_H_x_star}
-\theta \H(\x^*)\preceq \Q(\x^*)\preceq \theta \H(\x^*).
\end{align}
Therefore, Propositions \ref{prop:Gram_concor} and \ref{prop:local_nonsingularity} imply
\begin{align*}
-\theta\bigl(1+M\|\x-\x_*\|\bigr)\H(\x)
\overset{\eqref{eq:Gram_bound_0}}{\preceq}-\theta\H(\x_*)
\overset{\eqref{eq:Qx_star_bound_H_x_star}}{\preceq}\Q(\x_*)
\overset{\eqref{eq:Qx_star_bound_H_x_star}}{\preceq}\theta\H(\x_*)\overset{\eqref{eq:Gram_bound_0}}{\preceq}\theta\bigl(1+M\|\x-\x_*\|\bigr)\H(\x)
\end{align*}
for all $\vx\in\BR^d$, where $M=2LL_2/\mu^2$ and $\mu=\mu_*/\sqrt{2}$.
Additionally, for all $c>1$, the condition of $\|\x-\x_*\|\leq(c-1)\mu_*^2/(4cLL_2)$  in the statement implies
\begin{align*}
1+M\|\x-\x_*\|
&\leq1+\frac{2LL_2}{\mu^2}\cdot
\frac{(c-1)\mu_*^2}{4cLL_2}
=2-\frac{1}{c}\leq c,
\end{align*}
which indicates
\begin{align}
    \label{eq:Qx_star_bound}
-c\theta\H(\x)
&\preceq\Q(\x_*)\preceq c\theta\H(\x).
\end{align}
Recall that Propositions~\ref{prop:gram_lip} and~\ref{prop:gram_lower} give
\begin{align}\label{eq:mu^2-H-L}
\mu^2\I_d\preceq\H(\x)\preceq L^2\I_d.    
\end{align}
Thus, for matrix $\tilde{\nabla}^2\phi(\x) = \H(\x)+\Q(\x^*)$, it holds
\begin{align}
    \label{eq:tildenabla^2phi_bound}
(1-c\theta)\mu^2\I_d
\overset{\eqref{eq:mu^2-H-L}}\preceq(1-c\theta)\H(\x)
\overset{\eqref{eq:Qx_star_bound}}{\preceq}\tilde{\nabla}^2\phi(\x)\overset{\eqref{eq:Qx_star_bound}}{\preceq}(1+c\theta)\H(\x)
\overset{\eqref{eq:mu^2-H-L}}\preceq(1+c\theta)L^2\I_d
\end{align}
for all $c\in(1,1/\theta)$,
which proves~equation \eqref{eq:approx_phi_positive}.

We then prove equation~\eqref{eq:concor_phi}.
Recall that Proposition \ref{prop:gram_lip} indicates $\H(\cdot)$ is $2LL_2$-Lipschitz continuous, which leads to
\begin{align}
\label{eq:Hy-Hxboundebydistance}
-2LL_2\|\y-\x\|\I_d
&\preceq\H(\y)-\H(\x)
=\tilde{\nabla}^2\phi(\y)-\tilde{\nabla}^2\phi(\x)
\preceq2LL_2\|\y-\x\|\I_d.
\end{align}
We also have
\begin{align}
\label{eq:distanceboundbytildeHessian}
2LL_2\|\y-\x\|\I_d
\overset{\eqref{eq:tildenabla^2phi_bound}}{\preceq}\frac{2LL_2\|\y-\x\|}{(1-c\theta)\mu^2}
\tilde{\nabla}^2\phi(\x)=\widehat M\|\y-\x\|\tilde{\nabla}^2\phi(\x),
\end{align}
where $\widehat M= 2LL_2/((1-c\theta)\mu^2)$.
Therefore, it holds
\begin{align*}
\tilde{\nabla}^2\phi(\y)
&\overset{\eqref{eq:Hy-Hxboundebydistance}}{\preceq}\tilde{\nabla}^2\phi(\x)+2LL_2\|\y-\x\|\I_d
\overset{\eqref{eq:distanceboundbytildeHessian}}{\preceq}(1+\widehat M\|\y-\x\|)\tilde{\nabla}^2\phi(\x)
\end{align*}
and
\begin{align*}
\tilde{\nabla}^2\phi(\y)
&\overset{\eqref{eq:Hy-Hxboundebydistance}}{\succeq}\tilde{\nabla}^2\phi(\x)-2LL_2\|\y-\x\|\I_d
\overset{\eqref{eq:distanceboundbytildeHessian}}{\succeq}(1-\widehat M\|\y-\x\|)\tilde{\nabla}^2\phi(\x),
\end{align*}
which proves~\eqref{eq:concor_phi}.
\end{proof}

\subsection{Proof of Lemma~\ref{lm:GNlinearquadra}}
\label{app:proof_lemma_linear_quadratic_over}
\begin{proof}
For the ease of presentation, we denote
\begin{align}\label{eq:dfn-R}
\R\triangleq\nabla\phi(\x)-\nabla^2\phi(\x_*)(\x-\x_*).    
\end{align}
The expressions of $\tilde{\nabla}^2\phi(\cdot)$ and ${\nabla}^2\phi(\cdot)$ implies
\begin{align*}
\tilde{\nabla}^2\phi(\x)=\nabla^2\phi(\x_*)+\H(\x)-\H(\x_*).  
\end{align*}
Combing with the update rule~\eqref{eq:qNupdate} and the definition of $\mR$, we achieve
\begin{align*}
\x_{+}-\x_*
&=\x-\x_*-\G^{-1}\nabla\phi(\x)\\
&=(\I_d-\G^{-1}\tilde{\nabla}^2\phi(\x))(\x-\x_*)
  +\G^{-1}(\H(\x)-\H(\x_*))(\x-\x_*)-\G^{-1}\R.
\end{align*}
Left multiplying matrix $(\tilde{\nabla}^2\phi(\x))^{1/2}$ on both sides of above equation gives
\begin{align}
\label{eq:splithatlambda}
(\tilde{\nabla}^2\phi(\x))^{1/2}(\x_{+}-\x_*)
&=\u+\v-\w,
\end{align}
where
\begin{align}
\u
&\triangleq\bigl(\I_d-\bigl(\tilde{\nabla}^2\phi(\x)\bigr)^{1/2}
\G^{-1}\bigl(\tilde{\nabla}^2\phi(\x)\bigr)^{1/2}\bigr)
\bigl(\tilde{\nabla}^2\phi(\x)\bigr)^{1/2}(\x-\x_*), \label{dfn:proof-u}\\
\v
&\triangleq\bigl(\tilde{\nabla}^2\phi(\x)\bigr)^{1/2}\G^{-1}
(\H(\x)-\H(\x_*))(\x-\x_*), \label{dfn:proof-v}\\
\w
&\triangleq\bigl(\tilde{\nabla}^2\phi(\x)\bigr)^{1/2}\G^{-1}\R. \label{dfn:proof-w}
\end{align}
We provide upper bounds for $\u$, $\vv$ and $\vw$ as follows.
\begin{itemize}
\item \textbf{The upper bound for $\|\u\|$.}
The condition~\eqref{eq:condi_x_linear_quadra} for matrix $\G$ gives
\begin{align*}
\frac{1}{\eta}(\H(\x))^{-1}
&\preceq\G^{-1}\preceq(\H(\x))^{-1}.
\end{align*}
According to Lemma \ref{lm:pd_approximate_phi},
taking inverse on equation~\eqref{eq:approx_phi_positive} gives
\begin{align*}
(1-c\theta)\bigl(\tilde{\nabla}^2\phi(\x)\bigr)^{-1}
\preceq (\H(\x))^{-1}
\preceq (1+c\theta)\bigl(\tilde{\nabla}^2\phi(\x)\bigr)^{-1},
\end{align*}
where $c\in(1,1/\theta)$ and $\theta$ follows definition \eqref{eq:condiDelta}.
The above results yields
\begin{align}
\label{eq:boundGinverse}
\frac{1-c\theta}{\eta}\bigl(\tilde{\nabla}^2\phi(\x)\bigr)^{-1}
\preceq \frac{1}{\eta}(\H(\x))^{-1}\preceq\G^{-1}\preceq (\H(\x))^{-1}
\preceq(1+c\theta)\bigl(\tilde{\nabla}^2\phi(\x)\bigr)^{-1}.
\end{align}
Applying Lemma~\ref{lm:matrix_order_equivalence}
to equation \eqref{eq:boundGinverse} 
with $\A=(\tilde{\nabla}^2\phi(\x))^{-1}$, 
$\B=\G^{-1}$, $a_1=(1-c\theta)/\eta$, and $a_2=1+c\theta$
gives
\begin{align}
\label{eq:upperphiGphi}
\frac{1-c\theta}{\eta}\I_d
\preceq\bigl(\tilde{\nabla}^2\phi(\x)\bigr)^{1/2}\G^{-1}
\bigl(\tilde{\nabla}^2\phi(\x)\bigr)^{1/2}
\preceq(1+c\theta)\I_d.
\end{align}
Recall that the setting $\eta\geq 1$ implies $c\theta\leq1-(1-c\theta)/\eta$, which leads to
\begin{align}\label{eq:bound_u}
\begin{split}
\|\u\|
& \overset{\eqref{dfn:proof-u},\,\eqref{eq:our_measure}}\leq\Bigl\|\I_d-\big(\tilde{\nabla}^2\phi(\x)\big)^{1/2}\G^{-1}
\big(\tilde{\nabla}^2\phi(\x)\big)^{1/2}\Bigr\|
\lambda_{\mathrm{over}}(\x) \\
&~~\,\overset{\eqref{eq:upperphiGphi}}{\leq}~~\,
\left(1-\frac{1-c\theta}{\eta}\right)\lambda_{\mathrm{over}}(\x).
\end{split}
\end{align}
\item \textbf{The upper bound for $\|\v\|$.}
The definition of $\lambda_{\mathrm{over}}(\cdot)$ in equation
\eqref{eq:our_measure} and Lemma~\ref{lm:pd_approximate_phi}
imply
\begin{align}
\label{eq:distance_lambda}
\|\x-\x_*\|
\leq\Norm{\bigl(\tilde{\nabla}^2\phi(\x)\bigr)^{-1/2}}
\Norm{\bigl(\tilde{\nabla}^2\phi(\x)\bigr)^{1/2}(\x-\x_*)}
\overset{\eqref{eq:approx_phi_positive}}{\leq}
\frac{\lambda_{\mathrm{over}}(\x)}{\sqrt{1-c\theta}\mu},
\end{align}
where $c\in(1,1/\theta)$ and $\theta$ follows definition \eqref{eq:condiDelta}.
Based on the $2LL_2$-Lipschitz continuity of $\H(\cdot)$ from
Proposition~\ref{prop:gram_lip} and Lemma~\ref{lm:pd_approximate_phi}, we obtain
\begin{align}
\|\v\|
&\overset{\eqref{dfn:proof-v}}\leq\Norm{\bigl(\tilde{\nabla}^2\phi(\x)\bigr)^{1/2}\G^{-1}
\bigl(\tilde{\nabla}^2\phi(\x)\bigr)^{1/2}}
\Norm{\bigl(\tilde{\nabla}^2\phi(\x)\bigr)^{-1/2}}\cdot\Norm{\H(\x)-\H(\x_*)}\|\x-\x_*\|\nonumber\\
&\,\,\leq \Norm{\bigl(\tilde{\nabla}^2\phi(\x)\bigr)^{1/2}\G^{-1}
\bigl(\tilde{\nabla}^2\phi(\x)\bigr)^{1/2}}
\Norm{\bigl(\tilde{\nabla}^2\phi(\x)\bigr)^{-1/2}} \cdot 2LL_2\|\x-\x^*\|^2\nonumber
\\
&\!\!\!\!\!\overset{\eqref{eq:upperphiGphi}, \eqref{eq:approx_phi_positive}}{\leq}
\frac{2(1+c\theta)LL_2}{\sqrt{1-c\theta}\mu}\|\x-\x_*\|^2\nonumber\\
&\!\!\overset{\eqref{eq:distance_lambda}}{\leq}
\frac{2(1+c\theta)LL_2}{(1-c\theta)^{3/2}\mu^3}
\bigl(\lambda_{\mathrm{over}}(\x)\bigr)^2.\label{eq:bound_v}
\end{align}
\item \textbf{The upper bound for $\|\w\|$.}
Recall that equation \eqref{eq:boundGinverse} gives
\begin{align*}
\frac{1-c\theta}{\eta}\G\preceq\tilde{\nabla}^2\phi(\x)
\preceq(1+c\theta)\G,
\end{align*}
which implies
\begin{align}
\label{eq:boundGinverse2}
\frac{1-c\theta}{\eta}\G^{-1}\preceq\G^{-1}\tilde{\nabla}^2\phi(\x)\G^{-1}
&\preceq(1+c\theta)\G^{-1}.
\end{align}
On the other hand,
Proposition \ref{prop:gram_lip} and Lemma \ref{lm:GNlinearquadra} give
\begin{align}
\label{eq:Glowerbound}
    \mu^2\I_d
&\preceq\H(\x)\preceq\G.
\end{align}
Combining above two results, we obtain
\begin{align}
    \label{eq:boundGinversephiGinverse}
\Norm{\bigl(\tilde{\nabla}^2\phi(\x)\bigr)^{1/2}\G^{-1}}^2
&=\Norm{\G^{-1}\tilde{\nabla}^2\phi(\x)\G^{-1}}\overset{\eqref{eq:boundGinverse2}}{\leq}(1+c\theta)\Norm{\G^{-1}}
\overset{\eqref{eq:Glowerbound}}{\leq}\frac{1+c\theta}{\mu^2}.
\end{align}
Thus, the definition of $\mR$ in equation \eqref{eq:dfn-R} implies
\begin{align}
    \label{eq:bound_w}
\begin{split}
\|\w\|
&\overset{\eqref{dfn:proof-w}}\leq\Norm{\bigl(\tilde{\nabla}^2\phi(\x)\bigr)^{1/2}\G^{-1}}\Norm{\R}\\
&\!\!\!\!\!\!\overset{\eqref{eq:boundGinversephiGinverse},\,\eqref{eq:x_star_philip}}{\leq}
\frac{\sqrt{1+c\theta}\,\rho}{\mu}\|\x-\x_*\|^2
\overset{\eqref{eq:distance_lambda}}{\leq}
\frac{\sqrt{1+c\theta}\,\rho}{(1-c\theta)\mu^3}
\bigl(\lambda_{\mathrm{over}}(\x)\bigr)^2.
\end{split}
\end{align}
\end{itemize}
We then define
\begin{align*}
K
&\triangleq\frac{2(1+c\theta)LL_2}{(1-c\theta)^{3/2}\mu^3}
+\frac{\sqrt{1+c\theta}\,\rho}{(1-c\theta)\mu^3}.
\end{align*}
Plugging the above upper bounds for $\u$, $\vv$ and $\vw$ into~equation \eqref{eq:splithatlambda} gives
\begin{align}
\label{eq:phix_+-x}
\Norm{\big(\tilde{\nabla}^2\phi(\x)\big)^{1/2}(\x_{+}-\x_*)}
&\overset{\eqref{eq:bound_u},\,\eqref{eq:bound_v},\,\eqref{eq:bound_w}}{\leq}\left(1-\frac{1-c\theta}{\eta}\right)\lambda_{\mathrm{over}}(\x)
+K\bigl(\lambda_{\mathrm{over}}(\x)\bigr)^2.
\end{align}
On the other hand, Lemma \ref{lm:pd_approximate_phi} gives
\begin{align}
\label{eq:lower_x_+-x^*over}
\!\!\!\big\|\big(\tilde{\nabla}^2\phi(\x)\big)^{1/2}
(\x_{+}-\x_*)\big\|^2
&=(\x_{+}-\x_*)^{\top}\tilde{\nabla}^2\phi(\x)(\x_{+}-\x_*)\overset{\eqref{eq:approx_phi_positive}}{\geq}
(1-c\theta)\mu^2\|\x_{+}-\x_*\|^2.
\end{align}
Recall that definitions of $K$ and $C_1$ give $K\leq C_1$.
Hence, we achieve equation \eqref{eq:overfit_distance_lambda} as follows
\begin{align*}
\|\x_{+}-\x_*\|
&\overset{\eqref{eq:lower_x_+-x^*over}}{\leq}\frac{1}{\sqrt{1-c\theta}\mu}
\Norm{\bigl(\tilde{\nabla}^2\phi(\x)\bigr)^{1/2}
(\x_{+}-\x_*)}\\
&\overset{\eqref{eq:phix_+-x}}{\leq}
\frac{1}{\sqrt{1-c\theta}\mu}
\left(\left(1-\frac{1-c\theta}{\eta}\right)\lambda_{\mathrm{over}}(\x)
+K(\lambda_{\mathrm{over}}(\x))^2\right)\\
&\,~\leq\,~\frac{1}{\sqrt{1-c\theta}\mu}
\left(\left(1-\frac{1-c\theta}{\eta}\right)\lambda_{\mathrm{over}}(\x)
+C_1(\lambda_{\mathrm{over}}(\x))^2\right).
\end{align*}

We then prove equation \eqref{eq:movedistance_lambda} as follows
\begin{align}\label{eq:x_+-xlambda}
\begin{split}    
\|\x_{+}-\x\|
&\,\,\overset{\eqref{eq:qNupdate}}{=}~\, 
\Norm{\G^{-1}\J(\x)^{\top}\F(\x)} \\
&\,~=\,~\Norm{\G^{-1}\bigl(\J(\x)^{\top}\F(\x)
-\J(\x_*)^{\top}\F(\x_*)\bigr)} \\
& \overset{\eqref{eq:Glowerbound}}\leq\frac{\Norm{\J(\x)^{\top}(\F(\x)-\F(\x_*))}
 +\Norm{(\J(\x)^{\top}-\J(\x_*)^{\top})\F(\x_*)}}{\mu^2} \\
&\,\,\overset{\eqref{eq:lip}}{\leq}\,\,
\frac{L^2+L_2\|\F(\x_*)\|}{\mu^2}\|\x-\x_*\| \\
&\overset{\eqref{eq:distance_lambda}}{\leq}
\frac{L^2+L_2\|\F(\x_*)\|}{\sqrt{1-c\theta}\mu^3}
\lambda_{\mathrm{over}}(\x).
\end{split}
\end{align}

Recall that $\widehat M=2LL_2/((1-c\theta)\mu^2)$ and
$D=LL_2(L^2+L_2\|\F(\x_*)\|)$, which implies
\begin{align}
    \label{eq:Mx+-xbound}
    \begin{split}
\widehat M\|\x_{+}-\x\|
&\overset{\eqref{eq:x_+-xlambda}}{\leq}
\frac{2D\lambda_{\mathrm{over}}(\x)}{(1-c\theta)^{3/2}\mu^5} \\
&\,\overset{\eqref{eq:condi_x_linear_quadra}}{\leq}\,
\frac{2D}{(1-c\theta)^{3/2}\mu^5}
\cdot\frac{(1-c\theta)^{3/2}\mu_*^5}{16D}\\
&~\,=~\,\frac{\mu_*^5}{8\mu^5}
=\frac{1}{\sqrt{2}}<1,
\end{split}
\end{align}
where the last equality uses the fact $\mu=\mu_*/\sqrt{2}$.
Applying Lemma \ref{lm:pd_approximate_phi} with $\y=\x_{+}$ gives
\begin{align*}
\tilde{\nabla}^2\phi(\x_{+})
&\overset{\eqref{eq:concor_phi}}{\succeq}
\bigl(1-\widehat M\|\x_{+}-\x\|\bigr)
\tilde{\nabla}^2\phi(\x)\\
&\overset{\eqref{eq:approx_phi_positive}}{\succeq}
\bigl(1-\widehat M\|\x_{+}-\x\|\bigr)
    (1-c\theta)\mu^2\I_d\\
&\!\overset{\eqref{eq:Mx+-xbound}}{\succeq}\biggl(1-\frac{1}{\sqrt{2}}\biggr)
(1-c\theta)\mu^2\I_d\succ\0.
\end{align*}
Thus, the measure $\lambda_{\mathrm{over}}(\cdot)$ is well-defined, and we have
\begin{align*}
\bigl(\lambda_{\mathrm{over}}(\x_{+})\bigr)^2
&=(\x_{+}-\x_*)^{\top}\tilde{\nabla}^2\phi(\x_{+})(\x_{+}-\x_*)\\
&\!\!\overset{\eqref{eq:concor_phi}}{\leq}\!\!
\bigl(1+\widehat M\|\x_{+}-\x\|\bigr)
(\x_{+}-\x_*)^{\top}\tilde{\nabla}^2\phi(\x)(\x_{+}-\x_*)\\
&=\bigl(1+\widehat M\|\x_{+}-\x\|\bigr)
\Norm{\bigl(\tilde{\nabla}^2\phi(\x)\bigr)^{1/2}
(\x_{+}-\x_*)}^2.
\end{align*}
Note that $\lambda_{\mathrm{over}}(\x_{+})\geq0$ and
$1+\widehat M\|\x_{+}-\x\|\geq1$. Therefore, taking square roots on above inequality proves equation \eqref{eq:overfit_linear_quadra} as follows
\begin{align*}
\lambda_{\mathrm{over}}(\x_{+})
&\,~\leq~\,\sqrt{1+\widehat M\|\x_{+}-\x\|}
\Norm{\bigl(\tilde{\nabla}^2\phi(\x)\bigr)^{1/2}
(\x_{+}-\x_*)}\\
&\overset{\eqref{eq:Mx+-xbound}}{\leq}
\sqrt{1+\frac{2D\lambda_{\mathrm{over}}(\x)}{(1-c\theta)^{3/2}\mu^5}
}
\Norm{\bigl(\tilde{\nabla}^2\phi(\x)\bigr)^{1/2}
(\x_{+}-\x_*)}\\
&\overset{\eqref{eq:phix_+-x}}{\leq}
\sqrt{1+\frac{2D\lambda_{\mathrm{over}}(\x)}{(1-c\theta)^{3/2}\mu^5}}\,\left(\left(1-\frac{1-c\theta}{\eta}\right)\lambda_{\mathrm{over}}(\x)
+ K\bigl(\lambda_{\mathrm{over}}(\x)\bigr)^2\right)\\
&~\leq\,~\left(1+\frac{2D\lambda_{\mathrm{over}}(\x)}{(1-c\theta)^{3/2}\mu^5}\right)
\left(\left(1-\frac{1-c\theta}{\eta}\right)\lambda_{\mathrm{over}}(\x)
+K\bigl(\lambda_{\mathrm{over}}(\x)\bigr)^2\right)\\
&~=\left(1-\frac{1-c\theta}{\eta}\right)\lambda_{\mathrm{over}}(\x) + \left(\frac{2D}{(1-c\theta)^{3/2}\mu^5}\left(1-\frac{1-c\theta}{\eta}\right)+K\right)
\bigl(\lambda_{\mathrm{over}}(\x)\bigr)^2 \\
&~~~~~~~~~~+ \frac{2DK(\lambda_{\mathrm{over}}(\vx))^{3}}{(1-c\theta)^{3/2}\mu^5}
\\
&~\leq\,~\left(1-\frac{1-c\theta}{\eta}\right)\lambda_{\mathrm{over}}(\x)
+\left(\frac{2D}{(1-c\theta)^{3/2}\mu^5}+2K\right)
\bigl(\lambda_{\mathrm{over}}(\x)\bigr)^2\\
&~\leq\,~\left(1-\frac{1-c\theta}{\eta}\right)\lambda_{\mathrm{over}}(\x)
+C_1\bigl(\lambda_{\mathrm{over}}(\x)\bigr)^2,
\end{align*}
where the second last inequality is based on the result $2D\lambda_{\mathrm{over}}(\vx)/((1-c\theta)^{3/2}\mu^5)\leq 1$ achieved by the condition on $\lambda_{\mathrm{over}}(\x)$ shown in \eqref{eq:condi_x_linear_quadra}
and the last inequality is based on definitions of~$C_1$ and $K$ which imply $2D/((1-c\theta)^{3/2}\mu^5)+2K\leq C_1$.
\end{proof}

\subsection{Proof of Theorem~\ref{thm:linear1n>d}}
\label{app:proof_theorem_linear_over}
\begin{proof}
For the ease of presentation, we denote
\begin{align*}
\lambda_{\mathrm{over},t}\triangleq\lambda_{\mathrm{over}}(\x_t)
\end{align*}
in the following analysis.


We now use induction to prove equation \eqref{eq:induc_lambda} and the result
\begin{align}
\label{eq:inducx_tdistance}
\|\x_t-\x_*\|
\leq\frac{(c-1)\mu_*^2}{4cLL_2}
\end{align}
hold for all $t\geq0$.
For the induction base, 
the definition of $\eta_0$ gives
\begin{align*}
\H_0\preceq\G_0\preceq\eta_0\H_0,
\end{align*}
which implies equation \eqref{eq:induc_lambda} holds for $t=0$.
Additionally, the initial condition \eqref{eq:linear_initial_condi} directly gives~equation \eqref{eq:inducx_tdistance} for $t=0$.
For the induction step, we suppose equations 
\eqref{eq:inducx_tdistance} and \eqref{eq:induc_lambda} hold for all $i=0,1,\dots,t$.
Then equation \eqref{eq:inducx_tdistance} implies that $\x_i\in\Omega_{\mathrm{over}}$ for all $i=0,1,\dots,t$, which gives
\begin{align}\label{eq:lambda_over_no_grow}
\lambda_{\mathrm{over},t}
\overset{\eqref{eq:induc_lambda}}{\leq}\lambda_{\mathrm{over},0}
\overset{\eqref{eq:linear_initial_condi}}{\leq}
\frac{1-c\theta}{10\eta_0 C_1}
\overset{\eqref{eq:mu-C1}}{\leq} 
\frac{(1-c\theta)^{3/2}\mu_*^5}{16D}. 
\end{align}
Applying Lemma~\ref{lm:GNlinearquadra} with $\vx=\x_t$  and $\vx_+=\vx_{t+1}$ gives
\begin{align}\label{eq:lambda_over_t+1_bound}
\begin{split}
\lambda_{\mathrm{over},t+1}
&~~\overset{\eqref{eq:overfit_linear_quadra}}{\leq}~~
\biggl(1-\frac{1-c\theta}{\eta_t}
+C_1\lambda_{\mathrm{over},t}\biggr)\lambda_{\mathrm{over},t}\\
&\!\!\overset{\eqref{eq:induc_lambda},\,\eqref{eq:lambda_over_no_grow}}{\leq}\!\!\biggl(1-\frac{2(1-c\theta)}{3\eta_0}
+\frac{1-c\theta}{10\eta_0}\biggr)\lambda_{\mathrm{over},t}\\
&~~~\leq~~~\biggl(1-\frac{1-c\theta}{2\eta_0}\biggr)
\lambda_{\mathrm{over},t}\\
&~~\,\overset{\eqref{eq:induc_lambda}}{\leq}~~\biggl(1-\frac{1-c\theta}{2\eta_0}\biggr)^{t+1}
\lambda_{\mathrm{over},0}.
\end{split}
\end{align}
It also implies
\begin{align*}
\|\x_{t+1}-\x_*\|
&\,\,\overset{\eqref{eq:overfit_distance_lambda}}{\leq}\,
\frac{1}{\sqrt{1-c\theta}\mu}
\left(\left(1-\frac{1-c\theta}{\eta_t}\right)
\lambda_{\mathrm{over},t}+C_1\lambda_{\mathrm{over},t}^2\right)\\
&\overset{\eqref{eq:lambda_over_t+1_bound}}{\leq}
\frac{1}{\sqrt{1-c\theta}\mu}
\left(1-\frac{1-c\theta}{2\eta_0}\right)^{t+1}
\lambda_{\mathrm{over},0}\\
&\,~\leq~\frac{\lambda_{\mathrm{over},0}}
{\sqrt{1-c\theta}\mu}\overset{\eqref{eq:linear_initial_condi}}{\leq}\frac{\sqrt{1-c\theta}}{10\eta_0 C_1 \mu}\\
&\,\,\overset{\eqref{eq:mu-C1}}{\leq}
\frac{\sqrt{1-c\theta}\,(c-1)\mu^2}{20c\eta_0LL_2}
\leq\frac{(c-1)\mu_*^2}{4cLL_2},
\end{align*}
which means
\begin{align*}
\x_{t+1}\in\Omega_{\mathrm{over}}.    
\end{align*}
Furthermore, 
the induction hypothesis on equation \eqref{eq:induc_lambda} and
Proposition~\ref{prop:Gram_concor} with $\vx=\vx_t$ and~$\vy=\vx_{t+1}$ imply
\begin{align*}
\widetilde{\G}_t
&=(1+Mr_t)\G_t\overset{\eqref{eq:induc_lambda}}{\preceq}
\eta_0(1+Mr_t)
\exp\biggl(2\sum_{i=0}^{t-1}Mr_i\biggr)\H_t\\
&\overset{\eqref{eq:Gram_bound_0}}{\preceq}
\eta_0(1+Mr_t)^2
\exp\biggl(2\sum_{i=0}^{t-1}Mr_i\biggr)\H_{t+1}\preceq\eta_0\exp\biggl(2\sum_{i=0}^{t}Mr_i\biggr)\H_{t+1},
\end{align*}
where the last step is based on the fact $1+x\leq {\rm e}^{x}$ with $x=Mr_{\hat{t}}$.
It also implies
\begin{align*}
\H_{t+1}
&\overset{\eqref{eq:Gram_bound_0}}{\preceq}
(1+Mr_t)\H_t
\overset{\eqref{eq:induc_lambda}}{\preceq}(1+Mr_t)\G_t
=\widetilde{\G}_t.
\end{align*}
Therefore, we can Lemma~\ref{lm:QN_order} with $\mG=\widetilde{\G}_t$ and $\mH=\mH_{t+1}$ to achive
\begin{align}
\label{eq:over_G_t+1_H_t+1}
\H_{t+1}
&\preceq\G_{t+1}
\preceq\eta_0\exp\biggl(2\sum_{i=0}^{t}Mr_i\biggr)\H_{t+1}.
\end{align}
Additionally, Lemma \ref{lm:GNlinearquadra} with $\vx=\vx_t$ and $\vx_+=\vx_{t+1}$ implies
\begin{align}
M\sum_{i=0}^{t}r_i
&\overset{\eqref{eq:movedistance_lambda}}{\leq}
\frac{2D}{\sqrt{1-c\theta}\mu^5}
\sum_{i=0}^{t}\lambda_{\mathrm{over},i}\notag\\
&\overset{\eqref{eq:induc_lambda}}{\leq}
\frac{2D\lambda_{\mathrm{over},0}}{\sqrt{1-c\theta}\mu^5}
\sum_{i=0}^{t}\biggl(1-\frac{1-c\theta}{2\eta_0}\biggr)^i\notag\\
&~\leq\frac{4D\lambda_{\mathrm{over},0}\eta_0}{(1-c\theta)^{3/2}\mu^5}
\overset{\eqref{eq:linear_initial_condi}}{\leq}
\frac{2D}{5\sqrt{1-c\theta}\mu^5C_1}
\leq\frac{1-c\theta}{5}
\leq\frac{1}{5}.
\label{eq:sumr_ibound}
\end{align}
Consequently, we have
\begin{align*}
\G_{t+1}
\overset{\eqref{eq:over_G_t+1_H_t+1}}{\preceq}\eta_0\exp\biggl(2M\sum_{i=0}^{t}r_i\biggr)\H_{t+1}
\overset{\eqref{eq:sumr_ibound}}{\preceq}\eta_0{\rm e}^{2/5}\H_{t+1}
\preceq\frac{3\eta_0}{2}\H_{t+1}.
\end{align*}
This completes the induction. 
\end{proof}

\subsection{Proof of Theorem~\ref{thm:two-stage-n>d}}
\label{app:proof_theorem_two_stage_over}
\begin{proof}
We follow the notation  $\lambda_{\mathrm{over},t}=\lambda_{\mathrm{over}}(\x_t)$ defined in Appendix \ref{app:proof_theorem_linear_over}.
Condition~\eqref{eq:inicondi_final_n>d} means
the initial condition of Theorem~\ref{thm:linear1n>d} holds, then we achieve
\begin{align}
\label{eq:linear_path_over}
\lambda_{\mathrm{over},t}\overset{\eqref{eq:induc_lambda}}\leq \biggl(1-\frac{1-c\theta}{2\eta_0}\biggr)^t
\lambda_{\mathrm{over},0}
\qquad \text{and} \qquad
\|\x_t-\x_*\|\overset{\eqref{eq:inicondi_final_n>d}}{\leq}\frac{(c-1)\mu_*^2}{4cLL_2}.
\end{align}
Additionally, Lemma~\ref{lm:Gram_approximate} with $\Omega=\Omega_{\mathrm{over}}$ and $m=d$ gives
\begin{align}\label{eq:thm_delta_t_recur}
\H_t\preceq\G_t\preceq(1+\delta_t)\H_t
\qquad\text{and}\qquad
\mathbb E_t[\delta_{t+1}]
\leq(1-\gamma)(1+Mr_t)^2(\delta_t+2\tilde c dMr_t).
\end{align}
Note that equations \eqref{eq:linear_path_over} and \eqref{eq:thm_delta_t_recur} indicate conditions of
Lemma~\ref{lm:GNlinearquadra} holds by taking $\vx=\vx_t$, $\vx_+=\vx_{t+1}$, $\mG=\mG_t$, and $\eta=1+\delta_t$, then we achieve
\begin{align}
\label{eq:Mr_tboundedbylambda_t}
Mr_t
&\overset{\eqref{eq:movedistance_lambda}}{\leq}
\frac{2D\lambda_{\mathrm{over},t}}{\sqrt{1-c\theta}\mu^5}.
\end{align}
and
\begin{align}\label{eq:lambda_over_t+1boundbylambda_t}
\begin{split}    
\lambda_{\mathrm{over},t+1}
&\overset{\eqref{eq:overfit_linear_quadra}}{\leq}
\left(1-\frac{1-c\theta}{1+\delta_t}
+C_1\lambda_{\mathrm{over},t}\right)\lambda_{\mathrm{over},t} \\
&~=~ \left(\frac{\delta_t+c\theta}{1+\delta_t}
+C_1\lambda_{\mathrm{over},t}\right)\lambda_{\mathrm{over},t} \\
&~\leq\,\,\bigl(\delta_t
+C_1\lambda_{\mathrm{over},t}+c\theta\bigr)\lambda_{\mathrm{over},t}.
\end{split}
\end{align}
Therefore, the definition $C_2\triangleq C_1+4\tilde{c}dD/\big(\sqrt{1-c\theta}\mu^5\big)$ leads to
\begin{align}\label{eq:E_tdeltat+1}
\begin{split}    
\mathbb E_t[\delta_{t+1}]
&\overset{\eqref{eq:thm_delta_t_recur}}{\leq}
(1-\gamma)(1+Mr_t)^2(\delta_t+2\tilde c dMr_t)\\
&\overset{\eqref{eq:Mr_tboundedbylambda_t}}{\leq}
(1-\gamma)\biggl(1+\frac{2D\lambda_{\mathrm{over},t}}{\sqrt{1-c\theta}\mu^5}
\biggr)^2
\biggl(\delta_t+\frac{4\tilde c dD\lambda_{\mathrm{over},t}}{\sqrt{1-c\theta}\mu^5}
\biggr)\\
&~\,=~\,(1-\gamma)\biggl(1+\frac{2D\lambda_{\mathrm{over},t}}{\sqrt{1-c\theta}\mu^5}
\biggr)^2
\bigl(\delta_t+(C_2-C_1)\lambda_{\mathrm{over},t}\bigr).
\end{split}
\end{align}
We denote
\begin{align}\label{eq:xi-t}
    \xi_t\triangleq\delta_t+C_2\lambda_{\mathrm{over},t} \geq \delta_t+C_1\lambda_{\mathrm{over},t},
\end{align}
then it holds
\begin{align}\label{eq:overfit_delta_recur}
\begin{split}    
\mathbb E_t[\delta_{t+1}]
&\overset{\eqref{eq:E_tdeltat+1}}{\leq}(1-\gamma)\left(1+\frac{2D\lambda_{\mathrm{over},t}}{\sqrt{1-c\theta}\mu^5}
\right)^2\bigl(\delta_t+(C_2-C_1)\lambda_{\mathrm{over},t}\bigr)\\
&~\,\leq\,~(1-\gamma)
\left(1+\frac{C_2\lambda_{\mathrm{over},t}}{2}\right)^2
\left(\delta_t+C_2\lambda_{\mathrm{over},t}\right)\\
&\overset{\eqref{eq:xi-t}}{=}(1-\gamma)
\left(1+\frac{C_2\lambda_{\mathrm{over},t}}{2}\right)^2\xi_t,  
\end{split}
\end{align}
where the second inequality is based on the definition of $C_2$ and the setting $\tilde c \geq 1$ which result~$2D/(\sqrt{1-c\theta}\mu^5)\leq C_2/2$. 
Additionally, equation~\eqref{eq:lambda_over_t+1boundbylambda_t} gives
\begin{align}
\label{eq:overfit_lambda_recur}
\begin{split}
C_2\lambda_{\mathrm{over},t+1}
 \overset{\eqref{eq:lambda_over_t+1boundbylambda_t}}{\leq} C_2\lambda_{\mathrm{over},t}
\bigl(\delta_t+C_1\lambda_{\mathrm{over},t}\bigr)
+c\theta C_2\lambda_{\mathrm{over},t}  
 \overset{\eqref{eq:xi-t}}\leq C_2\lambda_{\mathrm{over},t}\xi_t
+c\theta C_2\lambda_{\mathrm{over},t}.
\end{split}
\end{align}
Recall that we have defined $\nu\triangleq\min\{\gamma,(1-c\theta)/(2\eta_0)\}<\gamma<1$, then combining results of~\eqref{eq:overfit_delta_recur} and \eqref{eq:overfit_lambda_recur} gives
\begin{subequations}
\begin{align}
\label{eq:xi_one_step_over_raw}
\mathbb E_t[\xi_{t+1}]
&\!\overset{\eqref{eq:overfit_delta_recur},\,\eqref{eq:overfit_lambda_recur}}{\leq}\Bigl((1-\gamma)
\Bigl(1+\frac{C_2\lambda_{\mathrm{over},t}}{2}\Bigr)^2
+C_2\lambda_{\mathrm{over},t}\Bigr)\xi_t
+c\theta C_2\lambda_{\mathrm{over},t}\\
\label{eq:xi_one_step_over}
&~~~~\leq (1-\nu)
\Bigl(\Bigl(1+\frac{C_2\lambda_{\mathrm{over},t}}{2}\Bigr)^2
+\frac{C_2\lambda_{\mathrm{over},t}}{1-\nu}\Bigr)\xi_t
+c\theta C_2\lambda_{\mathrm{over},t}\notag\\
&~~~~\leq (1-\nu)
\Bigl(1+\frac{C_2\lambda_{\mathrm{over},t}}{2}\Bigr)^2\Bigl(1
 + \frac{C_2\lambda_{\mathrm{over},t}}{1-\nu}\Bigr)\xi_t
 + c\theta C_2\lambda_{\mathrm{over},t}\notag\\
&~~~~\leq(1-\nu)\exp\Bigl(C_2\lambda_{\mathrm{over},t}\Bigr)
\exp\Bigl(\frac{C_2\lambda_{\mathrm{over},t}}{1-\nu}\Bigr)\xi_t
+c\theta C_2\lambda_{\mathrm{over},t}\notag\\
&~~~~=(1-\nu)
\exp\Bigl(\frac{(2-\nu)C_2\lambda_{\mathrm{over},t}}{1-\nu}
\Bigr)\xi_t
+c\theta C_2\lambda_{\mathrm{over},t},
\end{align}
\end{subequations}
where the last second inequality is based on the result $1+s\leq {\rm e}^s$ with~$s=C_2\lambda_{{\rm over},t}$
and~$s={C_2\lambda_{\mathrm{over},t}}/(1-\nu)$.

Taking expectation on~result \eqref{eq:xi_one_step_over} and applying~equation \eqref{eq:linear_path_over} give
\begin{align}
    \label{eq:xi_t+1_recur_xi_t}
    \begin{split}
\mathbb E[\xi_{t+1}]
&\overset{\eqref{eq:xi_one_step_over},\,\eqref{eq:linear_path_over}}{\leq}
(1-\nu)
\exp\Biggl(\frac{(2-\nu)C_2}{1-\nu}
\biggl(1-\frac{1-c\theta}{2\eta_0}\biggr)^t
\lambda_{\mathrm{over},0}\Biggr)\mathbb E[\xi_t]
\\
&~~~~~~~~~~~~~~~~~+c\theta C_2\biggl(1-\frac{1-c\theta}{2\eta_0}\biggr)^t
\lambda_{\mathrm{over},0}.
    \end{split}
\end{align}
Expanding the term $\mathbb E[\xi_t]$ according to above recursion gives
\begin{align*}
\mathbb E[\xi_{t+1}]
&\leq(1-\nu)^2
\exp\Biggl(\frac{(2-\nu)C_2}{1-\nu}\lambda_{\mathrm{over},0}
\Bigl(\bigl(1-\frac{1-c\theta}{2\eta_0}\bigr)^t
+\bigl(1-\frac{1-c\theta}{2\eta_0}\bigr)^{t-1}\Bigr)\Biggr)
\mathbb E[\xi_{t-1}]\\
&\quad+c\theta C_2(1-\nu)
\biggl(1-\frac{1-c\theta}{2\eta_0}\biggr)^{t-1}
\lambda_{\mathrm{over},0}
\exp\Biggl(\frac{(2-\nu)C_2}{1-\nu}
\biggl(1-\frac{1-c\theta}{2\eta_0}\biggr)^t
\lambda_{\mathrm{over},0}\Biggr)\\
&\quad+c\theta C_2\biggl(1-\frac{1-c\theta}{2\eta_0}\biggr)^t
\lambda_{\mathrm{over},0}.
\end{align*}
Repeating this substitution gives
\begingroup
\small
\begin{align}\label{eq:finialxi_t_recur}
\begin{split}
&\!\!\!\mathbb E[\xi_t]
\leq(1-\nu)^t\xi_0
\exp\Biggl(\frac{(2-\nu)C_2\lambda_{\mathrm{over},0}}{1-\nu}
\sum_{j=0}^{t-1}\biggl(1-\frac{1-c\theta}{2\eta_0}\biggr)^j\Biggr)\\
&~~+c\theta C_2\lambda_{\mathrm{over},0}
\sum_{i=0}^{t-1}(1-\nu)^{t-1-i}
\biggl(1-\frac{1-c\theta}{2\eta_0}\biggr)^i
\exp\Biggl(\frac{(2-\nu)C_2\lambda_{\mathrm{over},0}}{1-\nu}
\sum_{j=i+1}^{t-1}\biggl(1-\frac{1-c\theta}{2\eta_0}\biggr)^j\Biggr)
    \end{split}
\end{align}
\endgroup
for all $t\geq0$.
In addition, we have
\begin{align*}
\frac{(2-\nu)C_2}{1-\nu}
\lambda_{\mathrm{over},0}
\sum_{j=i}^{t-1}\biggl(1-\frac{1-c\theta}{2\eta_0}\biggr)^j
\leq\frac{2(2-\nu)C_2\eta_0}{(1-\nu)(1-c\theta)}
\lambda_{\mathrm{over},0}\overset{\eqref{eq:inicondi_final_n>d}}{\leq}
\frac{(2-\nu)(1-\gamma)}{5(1-\nu)(2-\gamma)}
\leq\frac{1}{5}
\end{align*}
for all  $i=0,\dots,t$, 
where the last inequality uses the setting $\nu\leq\gamma$.
Consequently, we have
\begin{align}
\label{eq:exponential_bound}
\exp\Biggl(\frac{(2-\nu)C_2}{1-\nu}
\lambda_{\mathrm{over},0}
\sum_{j=i}^{t-1}\biggl(1-\frac{1-c\theta}{2\eta_0}\biggr)^j\Biggr)
&\leq {\rm e}^{1/5}.
\end{align}
Recall that $\eta_0\geq1$ and $\nu\leq(1-c\theta)/(2\eta_0)$, then we have
\begin{align*}
c\theta=1-(1-c\theta)\leq1 - \frac{1-c\theta}{2\eta_0} \leq 1-\nu,
\end{align*}
which implies
\begin{align}
\label{eq:sumweightedbound}
\sum_{i=0}^{t-1}c\theta(1-\nu)^{t-1-i}
\biggl(1-\frac{1-c\theta}{2\eta_0}\biggr)^i
&\leq\sum_{i=0}^{t-1}(1-\nu)^{t-i}(1-\nu)^i
=t(1-\nu)^t.
\end{align}
Combing above reulst, we achive
\begin{align}
\mathbb E[\xi_t]
&\overset{\eqref{eq:finialxi_t_recur},\, \eqref{eq:exponential_bound}}{\leq} {\rm e}^{1/5}(1-\nu)^t\xi_0
 +{\rm e}^{1/5}c\theta C_2\lambda_{\mathrm{over},0}
 \sum_{i=0}^{t-1}(1-\nu)^{t-1-i}
 \biggl(1-\frac{1-c\theta}{2\eta_0}\biggr)^i\notag\\
&~~~~\,=\,\,{\rm e}^{1/5}\Biggl((\delta_0+C_2\lambda_{\mathrm{over},0})(1-\nu)^t
+C_2\lambda_{\mathrm{over},0}\sum_{i=0}^{t-1}c\theta(1-\nu)^{t-1-i}
\biggl(1-\frac{1-c\theta}{2\eta_0}\biggr)^i\Biggr)\notag\\
&\,\,\,~~\overset{\eqref{eq:sumweightedbound}}{\leq} {\rm e}^{1/5}
\bigl(\delta_0+(t+1)C_2\lambda_{\mathrm{over},0}\bigr)(1-\nu)^t\notag\\
&~~~~\overset{\eqref{eq:inicondi_final_n>d}}{\leq}
{\rm e}^{1/5}\left(\delta_0+
\frac{(1-\gamma)(1-c\theta)(t+1)}{10(2-\gamma)\eta_0}\right)
(1-\nu)^t\notag\\
\label{eq:xi_decay_over}
&~~\,\,\,\,\,\leq\,\,q_{\text{\rm over}}(t+1)(1-\nu)^t.
\end{align}
The relation of $\lambda_{\mathrm{over},t+1}$ and $\lambda_{\mathrm{over},t}$ in~\eqref{eq:overfit_lambda_recur} gives
\begin{align}
\label{eq:ratio_one_step_over}
\frac{\lambda_{\mathrm{over},t+1}}
{\lambda_{\mathrm{over},t}}
\leq c\theta+\xi_t.
\end{align}
Note that $0\leq\eta_t-1\leq\delta_t\leq\xi_t$, then taking expectation and using~\eqref{eq:xi_decay_over} give
\begin{align*}
\EBP{\eta_t-1}
\leq\EBP{\xi_t}
\overset{\eqref{eq:xi_decay_over}}{\leq}
q_{\text{\rm over}}(t+1)(1-\nu)^t\\
\end{align*}
and
\begin{align*}
\EBP{\frac{\lambda_{\mathrm{over},t+1}}{\lambda_{\mathrm{over},t}}}
\overset{\eqref{eq:ratio_one_step_over}}{\leq}
c\theta+\EBP{\xi_t}
\overset{\eqref{eq:xi_decay_over}}{\leq}
c\theta+q_{\text{\rm over}}(t+1)(1-\nu)^t,
\end{align*}
which proves~equation \eqref{eq:eta-ratio-over}.

It remains to prove the high-probability result.
We define the event $\fE_j$ as the inequality
\begin{align*}
    \xi_j>\frac{4q_{\text{\rm over}}}{p\nu^2}
\left(1-\frac{\nu}{2}\right)^j
\end{align*}
holds. Then we have
\begin{align*}
\mathbb P\Bigg(\bigcup_{j=0}^{+\infty}\fE_j\Bigg)
& \leq \sum_{j=0}^{\infty}\mathbb P\left(\fE_j\right) = \sum_{j=0}^{+\infty}\mathbb P\left(\xi_j>\frac{4q_{\text{\rm over}}}{p\nu^2}
\left(1-\frac{\nu}{2}\right)^j\right) \\
& \leq
\frac{p\nu^2}{4q_{\text{\rm over}}}
\sum_{j=0}^{+\infty}\frac{\mathbb E[\xi_j]}{(1-\nu/2)^j}\overset{\eqref{eq:xi_decay_over}}{\leq}
\frac{p\nu^2}{4}\sum_{j=0}^{\infty}(j+1)
\left(\frac{1-\nu}{1-\nu/2}\right)^j\\
& \leq\frac{p\nu^2}{4}\sum_{j=0}^{+\infty}(j+1)
\left(1-\frac{\nu}{2}\right)^j=p,
\end{align*}
where the first inequality is based on union bound;
the second inequality is based on Markov's inequality; 
the last inequality follows $1-\nu\leq(1-\nu/2)^2$;
and the equality uses the result~$\sum_{j=0}^{\infty}(j+1)z^j=1/(1-z)^2$ with $z=1-\nu/2$.
This implies
\begin{align*}
\mathbb P\left(~\bigcap_{j=0}^{+\infty}\overline{\fE_j}~\right) 
=\mathbb P\left(~\overline{\bigcup_{j=0}^{+\infty}\fE_j}~\right) \geq 1-p.
\end{align*}
Therefore, with probability at least $1-p$, the inequality
\begin{align}
\label{eq:xi_j_union_bound}
\xi_j
&\leq\frac{4q_{\text{\rm over}}}{p\nu^2}
\left(1-\frac{\nu}{2}\right)^j
\end{align}
holds for all $j\geq0$.
We take
\begin{align*}
t_0
=\Biggl\lceil
\frac{2}{\nu}
\log\Bigl(1+\frac{8q_{\text{\rm over}}}{p(1-c\theta)\nu^2}\Bigr)
\Biggr\rceil
=\OM\Biggl(\frac{\log\bigl(1+q_{\text{\rm over}}/(p(1-c\theta)\nu^2)\bigr)}{\nu}\Biggr),
\end{align*}
then the fact $1-\nu/2\leq{\rm e}^{-\nu/2}$ and the definition of $q_{\rm over}$ leads to
\begin{align}
\label{eq:qover_phase_bound}
\frac{4q_{\text{\rm over}}}{p\nu^2}
\left(1-\frac{\nu}{2}\right)^{t_0}
\leq\frac{4q_{\text{\rm over}}}{p\nu^2}
\exp\left(-\frac{\nu t_0}{2}\right)
\leq\frac{4q_{\text{\rm over}}/(p\nu^2)}
{1+8q_{\text{\rm over}}/(p(1-c\theta)\nu^2)}
\leq\frac{1-c\theta}{2}.
\end{align}
Therefore, with probability at least $1-p$, the inequality
\begin{align}\label{eq:ratio_one_step_over_bound}
\begin{split}    
\frac{\lambda_{\mathrm{over},t_0+i+1}}
{\lambda_{\mathrm{over},t_0+i}}
& \overset{\eqref{eq:ratio_one_step_over}}{\leq}
c\theta + \xi_{t_0+i}
\overset{\eqref{eq:xi_j_union_bound}}{\leq}
c\theta + \frac{4q_{\text{\rm over}}}{p\nu^2}
\left(1-\frac{\nu}{2}\right)^{t_0+i} \\
& \overset{\eqref{eq:qover_phase_bound}}{\leq}
c\theta +  \frac{1-c\theta}{2}\left(1-\frac{\nu}{2}\right)^i
\leq c\theta +  \frac{1-c\theta}{2} = \frac{1+c\theta}{2}
\end{split}
\end{align}
holds for all $i\geq 0$.
This implies for all $t\geq1$, the upper bound
\begin{align*}
\lambda_{\mathrm{over},t_0+t}
&=\lambda_{\mathrm{over},t_0}
\prod_{i=0}^{t-1}
\frac{\lambda_{\mathrm{over},t_0+i+1}}
{\lambda_{\mathrm{over},t_0+i}}\\
&\!\!\overset{\eqref{eq:ratio_one_step_over_bound}}{\leq}\lambda_{\mathrm{over},t_0}
\prod_{i=0}^{t-1}\frac{1+c\theta}{2}\\
&=\biggl(\frac{1+c\theta}{2}\biggr)^t
\lambda_{\mathrm{over},t_0}\\
&\!\!\overset{\eqref{eq:linear_path_over}}{\leq}
\biggl(\frac{1+c\theta}{2}\biggr)^t
\biggl(1-\frac{1-c\theta}{2\eta_0}\biggr)^{t_0}
\lambda_{\mathrm{over},0}.
\end{align*}
holds with probability at least $1-p$, which proves equation \eqref{eq:n>dtwostage}.
\end{proof}

\subsection{Proof of Corollary~\ref{col:theta=0n>d}}
\label{app:proof_coro_n>d}
\begin{proof}
We follow notations 
$\lambda_{\mathrm{over},t}=\lambda_{\mathrm{over}}(\x_t)$ and
$\xi_t=\delta_t+C_2\lambda_{\mathrm{over},t}$ defined in Appendix~\ref{app:proof_theorem_two_stage_over}.
Note that equation~\eqref{eq:linear_path_over} with $\theta=0$ gives
\begin{align}
\label{eq:linear_path_theta_zero}
\lambda_{\mathrm{over},t}
\leq\biggl(1-\frac{1}{2\eta_0}\biggr)^t
\lambda_{\mathrm{over},0}
\qquad\text{and}\qquad
\|\x_t-\x_*\|\leq\frac{(c-1)\mu_*^2}{4cLL_2}.
\end{align}
Additionally, equation \eqref{eq:xi_one_step_over_raw} with $\theta=0$  gives
\begin{align}
\label{eq:xi_one_step_theta_zero}
\begin{split}    
\mathbb E_t[\xi_{t+1}]
&\leq\left((1-\gamma)
\left(1+\frac{C_2\lambda_{\mathrm{over},t}}{2}\right)^2
+C_2\lambda_{\mathrm{over},t}\right)\xi_t \\
&\leq (1-\gamma)\left(1+\frac{C_2\lambda_{\mathrm{over},t}}{2}\right)^2\left(1+\frac{C_2\lambda_{\mathrm{over},t}}{1-\gamma}\right)\xi_t\\
&\leq(1-\gamma)
\exp\left(\frac{(2-\gamma)C_2\lambda_{\mathrm{over},t}}{1-\gamma}
\right)\xi_t,
\end{split}
\end{align}
where we use inequality $1+x\leq {\rm e}^x$ with $x=C_2\lambda_{\mathrm{over},t}/2$ and $x=C_2\lambda_{\mathrm{over},t}/(1-\gamma)$ in the last step.
Setting $\theta=0$ in~\eqref{eq:overfit_lambda_recur} gives
\begin{align}
\label{eq:lambda_one_step_theta_zero}
\lambda_{\mathrm{over},t+1}
&\leq\xi_t\lambda_{\mathrm{over},t}.
\end{align}
Taking expectation on~\eqref{eq:xi_one_step_theta_zero} and using~\eqref{eq:linear_path_theta_zero}, we have
\begin{align}\label{eq:xi_decay_theta_zero}
\begin{split}    
\EBP{\xi_t}
&\!\!\overset{\eqref{eq:xi_one_step_theta_zero}}{\leq} (1-\gamma)\exp\Biggl(\frac{(2-\gamma)C_2}{1-\gamma}\lambda_{\mathrm{over},t-1}\Biggr)\EBP{\xi_{t-1}}\\
&\!\!\overset{\eqref{eq:xi_one_step_theta_zero}}{\leq} (1-\gamma)^t \exp\Biggl(\frac{(2-\gamma)C_2}{1-\gamma}\sum_{i=0}^{t-1}\lambda_{\mathrm{over},i}\Biggr)\xi_0\\
&\!\!\overset{ \eqref{eq:linear_path_theta_zero}}{\leq}
(1-\gamma)^t\exp\Biggl(\frac{(2-\gamma)C_2}{1-\gamma}
\sum_{i=0}^{t-1}\biggl(1-\frac{1}{2\eta_0}\biggr)^i
\lambda_{\mathrm{over},0}\Biggr)
(\delta_0+C_2\lambda_{\mathrm{over},0})\\
&\,\leq\,\,(1-\gamma)^t
\exp\Biggl(\frac{2(2-\gamma)C_2\eta_0\lambda_{\mathrm{over},0}}{1-\gamma}\Biggl)
(\delta_0+C_2\lambda_{\mathrm{over},0})\\
&\overset{\eqref{eq:inicondi_final_n>d}}{\leq}
(1-\gamma)^tq_{\text{\rm over}}.
\end{split}
\end{align}
Hence, we apply Lemma~\ref{lm:lin_lemma26} with
$X_t=\xi_t$, $a=q_{\text{\rm over}}$, and $\tau=1/\gamma>1$, which implies with probability at least $1-p$, the inequality
\begin{align}
\label{eq:xi_high_probability_theta_zero}
\xi_t
&\leq\biggl(1-\frac{\gamma}{1+\gamma}\biggr)^t
\frac{q_{\text{\rm over}}}{p\gamma^2}
\end{align}
holds for all $t\geq0$.
This implies
\begin{align}
\label{eq:lambda_t+1-recur_over}
\lambda_{\mathrm{over},t+1}\overset{\eqref{eq:lambda_one_step_theta_zero}}{\leq} \xi_t\lambda_{\mathrm{over},t}
\overset{
\eqref{eq:xi_high_probability_theta_zero}}{\leq}
\biggl(1-\frac{\gamma}{1+\gamma}\biggr)^t
\frac{q_{\text{\rm over}}\lambda_{\mathrm{over},t}}{p\gamma^2}.
\end{align}
Taking
\begin{align*}
t_0
=\biggl\lceil
\frac{\log\bigl(1+2q_{\text{\rm over}}/(p\gamma^2)\bigr)}{\log(1+\gamma)}
\biggr\rceil=\OM\biggl(\frac{\log\bigl(1+q_{\text{\rm over}}/(p\gamma^2)\bigr)}{\gamma}\biggr)
\end{align*}
gives 
\begin{align}
\label{eq:superlinearleq1/2_over}
\biggl(1-\frac{\gamma}{1+\gamma}\biggr)^{t_0}
\frac{q_{\text{\rm over}}}{p\gamma^2}
&\leq\frac{1}{2}.
\end{align}
Then for all $i\geq0$, it holds
\begin{align*}
\lambda_{\mathrm{over},t_0+i+1}
&\overset{\eqref{eq:lambda_t+1-recur_over}}{\leq}
\biggl(1-\frac{\gamma}{1+\gamma}\biggr)^{t_0+i}
\frac{q_{\text{\rm over}}\lambda_{\mathrm{over},t_0+i}}{p\gamma^2}
\\
&~\,=\,\,\,\biggl(1-\frac{\gamma}{1+\gamma}\biggr)^i
\biggl(\biggl(1-\frac{\gamma}{1+\gamma}\biggr)^{t_0}
\frac{q_{\text{\rm over}}}{p\gamma^2}\biggr)
\lambda_{\mathrm{over},t_0+i}\\
&\overset{\eqref{eq:superlinearleq1/2_over}}{\leq}
\frac{1}{2}\biggl(1-\frac{\gamma}{1+\gamma}\biggr)^i
\lambda_{\mathrm{over},t_0+i}
\end{align*}
with probability at least $1-p$. 
Consequently, for all $t\geq1$, the upper bound
\begin{align*}
\lambda_{\mathrm{over},t_0+t}
&=\lambda_{\mathrm{over},t_0}
\prod_{i=0}^{t-1}
\frac{\lambda_{\mathrm{over},t_0+i+1}}
{\lambda_{\mathrm{over},t_0+i}}\\
&\leq\lambda_{\mathrm{over},t_0}
\prod_{i=0}^{t-1}\left(\frac{1}{2}
\left(1-\frac{\gamma}{1+\gamma}\right)^i\right)\\
&=\left(1-\frac{\gamma}{1+\gamma}\right)^{t(t-1)/2}
\biggl(\frac{1}{2}\biggr)^t\lambda_{\mathrm{over},t_0}\\
&\!\!\!\overset{\eqref{eq:linear_path_theta_zero}}{\leq}\!\!
\biggl(1-\frac{\gamma}{1+\gamma}\biggr)^{t(t-1)/2}
\biggl(\frac{1}{2}\biggr)^t
\biggl(1-\frac{1}{2\eta_0}\biggr)^{t_0}
\lambda_{\mathrm{over},0}
\end{align*}
holds with probability at least $1-p$.
\end{proof}

\section{Implementation of RQGN with F-BFGS}
\label{app:fbfgs}

We give the detailed implementation of RQGN with the fast BFGS update (Option II) in Algorithm~\ref{alg:fbfgs}, 
which can be regarded as Algorithm~\ref{alg:RQGN} with $\G_t=(\L_t^{\top}\L_t)^{-1}$. In particular, the fast BFGS update satisfies
\begin{align*}
\G_{t+1}
&=\fbfgs(\widetilde{\G}_t,\H(\x_{t+1}),\u_t) \\
&=\bfgs(\widetilde{\G}_t,\H(\x_{t+1}),\widetilde{\L}_t^{\top}\u_t),
\end{align*}
where $\widetilde{\G}_t=\big(\widetilde{\L}_t^{\top}\widetilde{\L}_t\big)^{-1}$.
Following \citet[Proposition~5.7]{liu2023symmetric} or \citet[Equation~(9.11)]{gower2017randomized}, the update on $\mG_{t+1}$ can be implemented by the closed-form iteration 
\begin{align*}
\L_{t+1}={}&\widetilde{\L}_t+
\big(\u_t(\u_t^\top\u_t)^{-1/2}
-\widetilde{\L}_t\H(\x_{t+1})\widetilde{\L}_t^\top\u_t
(\u_t^\top\widetilde{\L}_t\H(\x_{t+1})\widetilde{\L}_t^\top\u_t)^{-1/2}\big)\\
&~\quad\quad\times
\big(\u_t^\top\widetilde{\L}_t\H(\x_{t+1})\widetilde{\L}_t^\top\u_t\big)^{-1/2}
\u_t^\top\widetilde{\L}_t,
\end{align*}
which requires the cost of $\OM(m^2)$ flops per iteration.

\begin{algorithm}[H]
\caption{RQGN with the fast BFGS update}
\label{alg:fbfgs}
\begin{algorithmic}[1]
\STATE \textbf{Input:} $\x_0\in\Omega$, $\L_0\in\RB^{m\times m}$ such that $(\L_0^{\top}\L_0)^{-1}\succeq\H(\x_0)$, and $M\geq0$ \\[0.15cm]
\STATE \textbf{for} $t=0,1,\ldots$ \\[0.15cm]
\STATE \quad $\displaystyle
\x_{t+1}=\begin{cases}
\x_t-\J(\x_t)^{\top}\L_t^{\top}\L_t\F(\x_t),& n<d,\\
\x_t-\L_t^{\top}\L_t\J(\x_t)^{\top}\F(\x_t),& n\geq d;
\end{cases}$ \\[0.15cm]
\STATE \quad $r_t=\|\x_{t+1}-\x_t\|$ and $\widetilde{\L}_t=\L_t/\sqrt{1+Mr_t}$ \\[0.15cm]
\STATE \quad $[\u_t]_i\overset{\mathrm{i.i.d.}}{\sim}\mathcal N(0,1)$ \\[0.15cm]
\STATE \quad $\displaystyle
\begin{aligned}
\L_{t+1}={}&\widetilde{\L}_t+
\big(\u_t(\u_t^\top\u_t)^{-1/2}
-\widetilde{\L}_t\H(\x_{t+1})\widetilde{\L}_t^\top\u_t
(\u_t^\top\widetilde{\L}_t\H(\x_{t+1})\widetilde{\L}_t^\top\u_t)^{-1/2}\big)\\
&~\quad\quad\times
\big(\u_t^\top\widetilde{\L}_t\H(\x_{t+1})\widetilde{\L}_t^\top\u_t\big)^{-1/2}
\u_t^\top\widetilde{\L}_t
\end{aligned}$
\STATE \textbf{end for}
\end{algorithmic}
\end{algorithm}

\FloatBarrier

\bibliographystyle{plainnat}
\bibliography{ref}
\end{document}